\crefname{hypothesis}{Hypothesis}{Hypotheses}
\title{Comparison of Shape Derivatives using CutFEM for Ill-posed Bernoulli Free Boundary Problem
  \thanks{Submitted to the editors of Journal of Scientific Computing.
\funding{EB and CH were was funded by the EPSRC grant EP/P01576X/1. ML
was funded by The 
Swedish Foundation for Strategic Research Grant No.\ AM13-0029, the Swedish Research Council Grants No. 2017-03911 and the Swedish Research Program Essence}
}}
\author{Erik Burman
	\thanks{Department of Mathematics, 
	University College London, Gower Street, London, UK--WC1E  6BT, United Kingdom 
  	(\email{e.burman@ucl.ac.uk})}
\and 
	Cuiyu He
	\thanks{Department of Mathematics, University College London, Gower Street, London, 
	UK--WC1E  6BT, United Kingdom
  	(\email{c.he@ucl.ac.uk})}	
\and
	Mats G. Larson
	\thanks{Department of Mathematics and Mathematical Statistics, Ume\aa \hspace{1mm}University,
	SE-90187 Ume\aa, Sweden (\email{mats.larson@umu.se})}
}
\newcommand{\bfn}{\boldsymbol n}
\newcommand{\bfv}{\boldsymbol v}
\newcommand{\bn}{\boldsymbol n}
\newcommand{\bfbeta}{\boldsymbol \beta}
\newcommand{\bftheta}{\boldsymbol \theta}
\numberwithin{equation}{section}
\newtheorem{lem}{Lemma}[section]
\newtheorem{rem}{Remark}[section]
\newcommand{\jump}[1]{[\![#1]\!]}
\def\cT{{\mathcal T}}
\def\cE{{\mathcal E}}
\newcommand{\divvr}{ \nabla \cdot}
\def\O{{\Omega}}
\def\o{{\omega}}
\begin{document}

\maketitle

% REQUIRED
\begin{abstract}
In this paper we discuss a level set approach for the identification of
an unknown boundary in a computational domain. The problem takes the
form of a Bernoulli problem where only the Dirichlet datum is known on the
boundary that is to be identified, but additional information on the Neumann condition is available on the known part of the boundary. The approach uses a classical
constrained optimization problem, where a cost functional is minimized with
respect to the unknown boundary, the position of which is defined implicitly by a level set function. To solve the optimization problem a
steepest descent algorithm using shape derivatives is applied. In each iteration the
cut finite element method is used to obtain high accuracy
approximations of the pde-model constraint for a given level set configuration without re-meshing. We
consider three different shape derivatives. First the classical one,
derived using the continuous optimization problem (optimize then
discretize). Then the functional is first discretized using the CutFEM
method and the shape derivative is evaluated on the finite element functional (discretize then optimize). Finally we consider
a third approach, also using a discretized functional. In this case
we do not perturb the domain, but consider a so-called boundary value
correction method, where a small correction to the boundary position
may be included in the weak boundary condition. Using this correction
the shape derivative may be obtained by
perturbing a distance parameter in the discrete variational
formulation. The theoretical discussion is illustrated with a series
of numerical examples showing that all three approaches produce
similar result on the proposed Bernoulli problem.
%This paper introduces and compares three types shape derivative (SD) for the ill-posed Bernoulli problem, also sometimes referred to as an ``inverse obstacle problem''. For the ill-posed Bernoulli problem, the Cauchy datum is known on the fixed boundary and only Dirichlet data is known on the free (unknown) boundary. 
%To localize the free boundary, the shape derivative is used to drive the initial guess of free boundary through the optimal direction that minimize the cost functional. 

%In the vast literature the classical surface-based SD are more preferred due to its lower computation cost and easy representation. In \cite{hiptmair2015comparison} it is, however, shown that the volume based SD requires less regularity and has been proven to possess certain superconvergence properties. We note that the classical volume- and surface-based SDs are equivalent when analyzing in the continuous level. However, both forms becomes inexact when they are used in the discrete levels.
%Besides the classical volume-based inexact SD, we derive the exact volume-based SD by solely analyzing the discrete system. It has shown in our numerics that those two volume-based SDs performs similarly. Moreover, we also design a new surface-based SD that has simpler representation and can be quite easily obtained  based on the finite element method with boundary correction on the free boundary and therefore exact.    
% Numerical experiments using our surface-type exact SD have shown similar or even better convergence rates comparing to the volume-type SDs. 
\end{abstract}

% REQUIRED
\begin{keywords}
  Ill-posed free boundary Bernoulli problem; Cut Finite Element Method;  Level set method; non-fitted mesh;
\end{keywords}

% REQUIRED
\begin{AMS}
65N20,65N21,65N30
\end{AMS}

\section{Introduction}
%\cite{Isakov:2009}
This paper deals with the reconstruction of the free surface of the
ill-posed free boundary Bernoulli problem. Comparing to the classical free boundary Bernoulli problem, this paper
studies the free boundary problems for which Dirichlet data is known on the
free boundary and Cauchy data is known on the fixed
boundary. Such problems are found for instance in models where perfectly insulated obstacles \cite{afraites2007detecting} need to be detected from data.
%Following \cite{BEHLL17}, 
Following \cite{BEHLL17} we use the cut finite element method (CutFEM) together  with a level set approach in order to numerically identify the free boundary using the shape optimization method. The level set method is a commonly used tool for inverse problems and optimal design \cite{peng1999pde,osher2001level,Bur01,wang2003level,AJT02, AJT04,BCT05, burger2005survey}. When the level set method is used in the framework of shape optimization or identification, the shape gradient (or steepest descent direction) is obtained by solving partial differential equations in the domain defined by the level set. It is then advantageous to use a fictitious domain type approximation method, provided a sufficient accuracy can be ensured. This is the rationale for combining the CutFEM with level set based optimization. The CutFEM features the following advantages: (1) there is no need to modify the classical basic functions; (2) the approximation has optimal accuracy in the bulk and on the boundary; and (3) it can easily be used in combination with the \textit{level set method}. 
 It has indeed been applied in combination with the level set approach to various shape or topology optimization problems, for instance in \cite{VM17, burman2018shape, bernland2018acoustic, BEHLL19}.

For the shape optimization method, shape sensitivity analysis plays a paramount role. The objective of the present work is to explore the effect of using different shape derivatives in the shape identification problem described above.
First we recall the classical shape derivative obtained by computing the gradient of the Lagrangian functional on the continuous level in an optimize-then-discretize approach.
The gradient is then approximated using the cut finite element method.
We note here that using the classical optimize-then-discrete approach, the shape derivative has two equivalent forms by the structure theorem of Hadamard-Zol\'{e}sio \cite{hadamard1908memoire,delfour2011shapes}, i.e., the domain and boundary representations.  Assuming enough regularity on the continuous level those two forms are equivalent. The applicability of the domain representation is in principle wider, since it requires lower regularity. Moreover, it has been proven to possess certain super-convergence properties compared to the boundary formulation \cite{hiptmair2015comparison,hiptmair2015shape,laurain2016distributed}. In this work, we obtain the domain form for the optimize-then-discretize approach.
%The domain representation, however, requires the analysis of perturbation on the bulk domain thus the problem dimension increases by one compared to using the boundary form.  

One may argue that the discretization of the gradient obtained from the continuous approach only gives an approximate gradient, whose accuracy depends on the mesh-size and that this may prohibit convergence to the minimizer on a fixed mesh. 
In this paper we therefore aim to derive and study shape derivatives for the CutFEM framework using the discretize-then-optimize approach. The advantage is that the shape derivative obtained by this approach in principle can be exact on the mesh-scale considered. However, since this approach optimizes the discretized system directly, the shape derivative may need more terms for the representation. Another 
potential problem with this approach is that although the shape derivative is computed using the discrete system, the descent direction in general is not a function in the finite element space and therefore it still needs to be approximated.

Instead of using the complex formula resulting from the discretize-then-optimize approach, it turns out that we can approximate the shape derivative of the discrete formulation in a much more simpler way. The shape derivative of the discrete system may be obtained through the CutFEM method together with a boundary value correction method \cite{BDT72,BHL18,main2018shifted,cheung2019optimally, Burman2019}. Such a boundary value correction type shape derivative, is also exact for the discrete formulation. 
The derivative only depends on the boundary terms in the Nitsche, or Lagrange muliplier formulation, which could make it possible to tackle more sophisticated problems whose classical shape derivative is difficult to find. The rigorous justification of this boundary value correction shape derivative will be left for future work, instead we will compare its performance numerically with the two other approaches.

To verify and compare the performance of the three different types of derivatives, i.e., the continuous, the discrete and the boundary value correction type, some numerical experiments are presented at the end of this manuscript. Since the objective was to compare the shape derivatives we only consider a simple  steepest descent algorithm for the optimization and it is expected that convergence can be enhanced by applying a more sophisticated method such as the Levenberg-Marquard method proposed in \cite{Burg04}. It turns out that all three shape derivatives have similar performance. 

For another level set based identification method not relying on shape derivatives we refer to \cite{BD10,BD14}.

The paper is organized as follows. In \cref{sec:model problem}, we introduce the model problem. 
Then we introduce the CutFEM for the numerical approximation of the primal and dual solutions in \cref{sec:cutfem}. The various shape derivatives are introduced in \cref{sec:shape-derivatives}. The final optimization algorithm is provided in 
\cref{sec:Optimization-Algorithms}. Finally, the results for numerical experiments are presented in \cref{sec:Numerical Experiments}.

\section{Model problem}\label{sec:model problem}
Let $\hat \O \subset \mathbb{R}^2$ be a simply connected fixed domain and $\Gamma_{f}:= \partial \hat \Omega$. Let
$\mathcal{O}$ be a family of bounded connected domains $\O \subset \hat \O$ with the Lipschitz boundary
$\partial \O = \Gamma_{f} \cup \Gamma_{\O}$ where $\Gamma_{\O}$ is the free component of the boundary that is to be determined (see \cref{fig:problem} for an example). For simplicity, we assume there is no intersection between $\Gamma_\O$ and $\Gamma_f$.
%\todo{Assume $\Gamma_f$ does not touch $\Gamma_\Omega$?} Yes!
\begin{figure}[h]\label{fig:problem}
\centering
\includegraphics[width=0.50\textwidth]{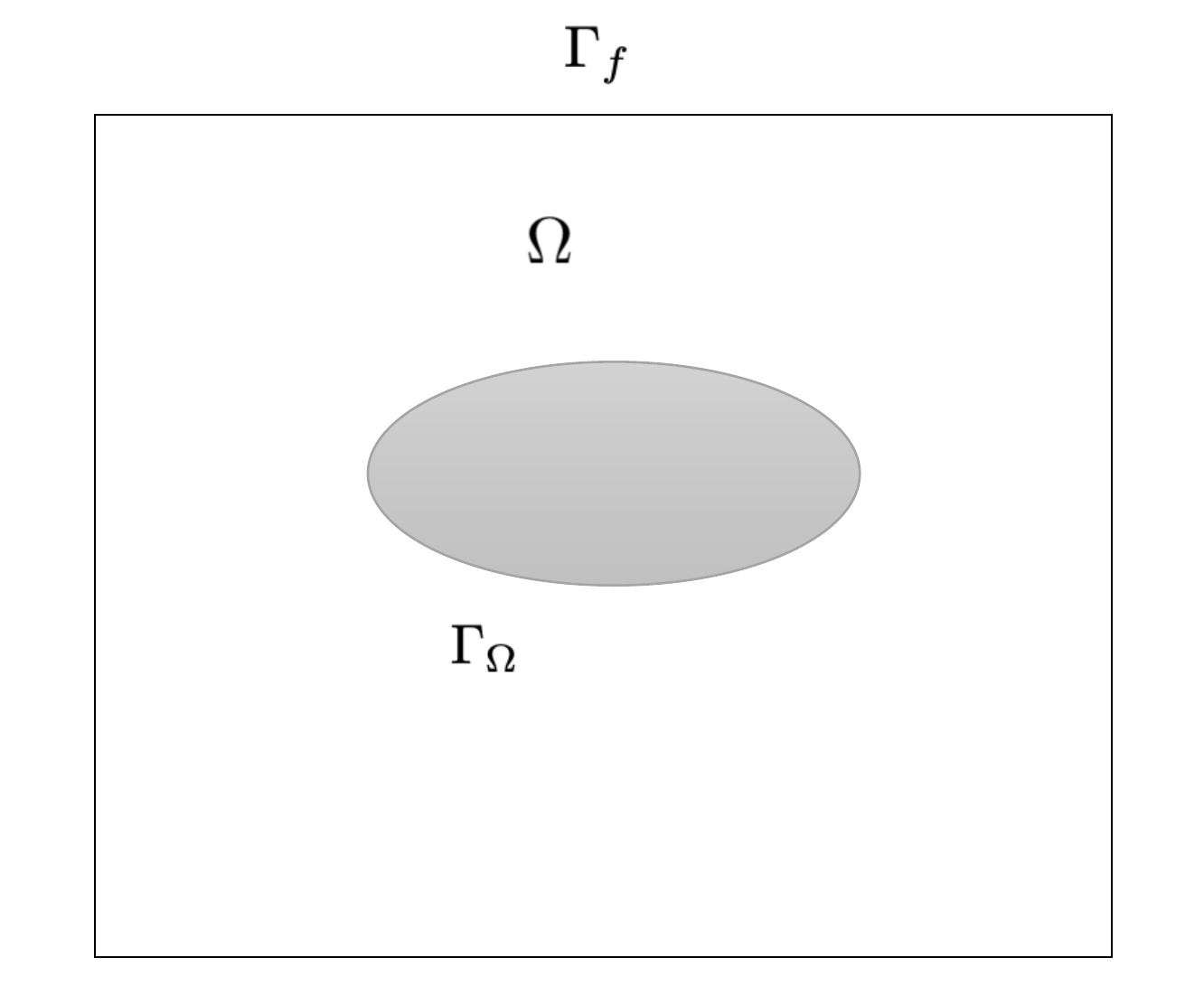}
\caption{The domain with the fixed boundary $\Gamma_{f}$ and the unknown boundary $\Gamma_{\O}$}.
\end{figure}
%\todo{Update notation in figure} Done!
We consider the interior type ill-posed free boundary Bernoulli problem, i.e., the fixed boundary $\Gamma_{f}$ is exterior to $\Gamma_{\O}$.
%and let $\o = \hat \Omega \setminus \Omega$.
Find $\tilde \Omega \in \mathcal{O}$ and $u: \tilde \Omega \rightarrow R$ such that 
\begin{equation}\label{inverse-problem}
\begin{split}
	- \triangle  u = f & \mbox{ in } \tilde \O,\\
	u= 0 &\mbox{ on } \Gamma_{\tilde \O},\\
	u = g_D & \mbox{ on } \Gamma_{f},\\
	D_n u = g_N & \mbox{ on } \Gamma_{f}.
\end{split}
\end{equation}
Here $ \mathcal{O}$ denotes the set of all admissible domains.
The datum $(f,g_D,g_N)$ is chosen such that $f \in L^2(\tilde \O)$, $g_D \in H^{1/2}(\Gamma_{f})$ and $g_N \in H^{-1/2}(\Gamma_{f})$. Here $D_n u  = \nabla u \cdot \bn$ where $\bn$ is the unit outer normal vector to the domain. 
It is known that, provided the data $f,\, g_D ,\, g_N$ are compatible with a solution $\Gamma_{\O}$, this solution is unique. This follows by a unique continuation argument from the Cauchy data on $\Gamma_{f}$. For a proof in the context of scattering problems we refer to \cite[Theorem 2]{CK18}.

For an arbitrary $\Omega \in \mathcal{O}$, the system \cref{inverse-problem} is over-determined and therefore the solution may not exist. 
To represent the interface, we here use the zero level set of a continuous function. The value of level set function away from the interface is not important, provided the gradient of the level set function do not degenerate. To be precise, for each $\O$ we aim to find a level set function $\phi(\Omega)$ such that 
\begin{equation}
	\phi(x) 
	\begin{cases}
	> 0 &\mbox{ if } x \not \in \O,\\
	= 0 & \mbox{ if } x \in \Gamma_{\O},\\
	<0 & \mbox{ if } x \in \O.
	\end{cases}
\end{equation}
To locate the true free boundary starting from an initial guess $\O$, we use a shape optimization procedure that uses a well-posed pair of forward and dual problems.  The free boundary is then transported in the optimal direction using an interface transport direction given by the shape derivative of the cost functional. 

%Level set function is an implicit representation of the surface and it has been used as a standard tool for the inverse problems related to shape optimization. The most important advantage of using level set is its flexibility in dealing with topology change like splitting and merging while very little priori information of the unknown geometry is required. 

%Another critical advantage of level set method is its flexibility in dealing with different forms of shape derivatives. It is well known that the shape derivative has two forms, i.e., the bulk domain and surface forms. In the continuous level those two forms are equivalent while in the discrete level the bulk form  is more accurate because transition from the bulk to the surface form will discard some non-zero jump terms that is vanishing in the continuous level. To use the bulk domain form then one has to solve for the optimal velocity in the entire domain.  The optimal velocity is then used to evolve the domain and if using level set approach it is naturally transformed to solve the Hamilton-Jacobian equation. Using explicit parametric approach, however, is restricted to the surface form.

Define the spaces
\begin{align}
	H^1_{0,\Gamma_{\O}}(\O)& := \{v \in H^1(\O): v =0 \mbox{ on } \Gamma_{\O}\}
\\	
	H^1_{0}(\O) &:= \{v \in H^1(\O): v =0 \mbox{ on } \partial \O \}.
\end{align}
Let $(\cdot,\cdot)_\Omega$ denote the $L^2$-scalar product over $\Omega \subset \mathbb{R}^2$ and $\left<\cdot,  \cdot\right>_{\Gamma}$ the $L^2$-scalar product over 
the curve $\Gamma \subset \mathbb{R}^2$. The $L^2$-norm over a subset $X$ of $\mathbb{R}^s$, $s=1,2,$ will be denoted by
$\|\cdot\|_X$.
 
To find an approximation of the solution to the inverse problem \cref{inverse-problem}, we solve the following PDE constrained optimization problem:
find $\O^* \in \mathcal{O}$ such that
\begin{equation}\label{optimization-pro}
	J(\O^*) = \min \limits_{\O \in \mathcal{O}} J(\O) \quad \forall \, \O \in \mathcal{O},
\end{equation}
where the cost functional is defined by
\begin{equation}\label{cost-functional}
J(\O) = \dfrac{1}{2}h^{-1}\| g_D -u(\O)\|^2_{\Gamma_{f}},
\end{equation}
where $h$ is the mesh size of the finite element mesh that will be used for the numerical approximation, and $u(\O)\in H_{0,\Gamma_{\O}}(\O)$ satisfies 
\begin{equation}\label{u-formulation}
a(u, v):= (\nabla u, \nabla v) = f(v) + \left<g_N, v\right>_{\Gamma_{f}} \quad \forall \, v \in H_{0,\Gamma_{\O}}(\O).
\end{equation}
When there is no risk of ambiguity, we replace $u(\O)$ by $u$. %and $\Gamma_{\O}$ by $\Gamma$.

The corresponding Lagrangian for the constrained minimization problem \cref{optimization-pro} can be formalized as
follows:
\begin{equation}\label{Lagrangian}
\begin{split}
\mathcal{L}(\O, u(\O), v) = \dfrac{1}{2}h^{-1}\| g_D -u(\O)\|^2_{\Gamma_{f}} - a(u,v) + l(v)
\end{split}
\end{equation}
where $l(v) = f(v)+ \left< g_N, v\right>_{\Gamma_{f}}$.

To find the critical point, denoted by $(u,p)$, we take the Fr\'echet derivative with respect to $u$ and $v$.
For the primal variable, $u$, it yields to solve \cref{u-formulation}.
%which gives the following forward problem on weak form: find $u \in H_{0,\Gamma_{\O}}(\O)$ such that
%%\begin{equation}\label{u-formulation}
%%a(u, v) = f(v) + \left<g_N, v\right>_{\Gamma_{f}} \quad \forall \, v \in H_{0,\Gamma_{\O}}(\O)
%%\end{equation}
%where $a(u,v) = (\nabla u, \nabla v)_{\O}$. 
For an arbitrary  $\O \in \mathcal{O}$,
this corresponds to the following forward problem: find $u(\O): \O \rightarrow R$ such that
\begin{equation}\label{u-forward}
\begin{split}
	- \triangle  u = f & \mbox{ in } \O,\\
	u= 0 &\mbox{ on } \Gamma_{\O},\\
	D_n u = g_N & \mbox{ on } \Gamma_{f}.
\end{split}
\end{equation}

For the adjoint solution $p$, we obtain the weak formulation: find $p \in  H_{0,\Gamma_{\O}}^1(\O)$ such that
\begin{equation}\label{p-formulation}
\begin{split}
	 (\nabla v, \nabla p)_{\O} = h^{-1}\left< u - g_D, v\right>_{\Gamma_{f}} \; \forall v \in H_{0,\Gamma_{\O}}^1(\O).
\end{split}
\end{equation}

%An equivalent weak formulation for \cref{u-forward}  is to find $u \in H^1(\O)$ such that
%\begin{equation}\label{u-formulation-1}
%a_1(u, v) = f(v) + \left<g_N, v\right>_{\Gamma_{f}} 
%\quad \forall \, v \in H_{1}(\O)
%\end{equation}
%where $a_1(u,v) = a(u,v) -  \left< D_n u ,v\right>_{\Gamma_{\O}} 
%-   \left< D_n v ,u\right>_{\Gamma_{\O}}
%+ 
%\gamma h^{-1} \left< u ,v\right>_{\Gamma_{\O}}$.

%Then we seek to find the set $\O$ such that
%\begin{equation}
%	\O = \underset{\o\in \mathcal{O}}{\mbox{argmin}} J(\o),
%\end{equation}
%where $\mathcal{O}$ is the set of all admissible domains and $J(\o) =\dfrac{1}{2} \| g_D -u(\o)\|^2_{\Gamma_{f}}$.

\begin{remark}
If $\O = \tilde \O$ we have $u = g_D$ on $\Gamma_{f}$ and hence $p \equiv 0$ in $\tilde \O$.
\end{remark}

\begin{remark}
The relation between \cref{inverse-problem} and \cref{optimization-pro} is as follows. If $\tilde \O$ is the solution to \cref{inverse-problem} and $\tilde \O \in \mathcal{O}$ then $\tilde\O$ is the global minimum to \cref{optimization-pro}. The converse is also true, by the uniqueness of the inclusion, however there may be local minima that complicate the identification.
\end{remark}
%If $\O^*$ is the solution to \cref{optimization-pro} then it can be viewed as the best approximation of $\tilde \O$ within the space of $\mathcal{O}$.
%For simplicity we assume that $\mathcal{O}$ is properly chosen such that \cref{optimization-pro} has a solution. The approximation of %$\O^*$ to $\tilde \O$ is unknown??.

\section{Approximation of primal and dual solutions using CutFEM}\label{sec:cutfem}
In this section we approximate the primal and dual solution for \cref{u-formulation} and  \cref{p-formulation}, respectively.
To solve the primal and dual solutions we use the CutFEM method.  The main advantages of using the CutFEM method is that a fixed background mesh of $\hat \O$ may be used that does not need to fit the moving boundary. The background domain $\hat \O$ is chosen to be a regular domain, e.g., unit square, such that $\O \in \hat \O$ for all $\O \in \mathcal{O}$.
Moreover, stability and accuracy of CutFEM, similar to standard FEM is guaranteed given proper stabilization. 

Let $\cT =\{K\}$ be a shape regular triangular partition of $\hat \O$ and $h = \max \limits_{K \in \cT} h_K$ where $h_K$ is the diameter of $K$.
Define
\[
	V_{h}(\O) = \{ v \in H_1(\O): v|_K \in P_1(K) \;\forall \, K \in \cT \},
\]
and, for $v,w \in V_{h}(\O)$, define
\begin{equation}\label{a-h}
a_h(w,v) := \tilde{a}_h(w,v) + j(w,v)  
\end{equation}
with
\begin{equation}
\tilde{a}_h(w,v) = (\nabla w, \nabla v)_{\O} - \left<D_n w, v\right>_{\Gamma_{\O}}  - \left<D_n v, w\right>_{\Gamma_{\O}} + 
\beta h^{-1} \left<w,v \right>_{\Gamma_{\O}} ,
\end{equation}
and
\begin{equation}
j(w,v) = \sum_{F \in \cE_I} \gamma h \int_{F} \jump{D_n w} \jump{D_n v}\,ds,
\end{equation}
where $\cE_I = \{ F \subset \partial K: K \in \cT ;\; F \cap \partial \hat \Omega \ne F \}$ denotes the set of interior faces of the background mesh. The form $j(w,v)$ is the so-called ghost penalty stabilization \cite{burman2010ghost} and $\jump{\cdot}|_F$ denotes the jump operator on $F$. To simplify the presentation, we here make the ghost penalty stabilization act on all the interior faces. In practice it may be localized to the element faces in the interface zone.

%Given a given domain $\O$, 
Considering the following variational problems:
find $u_{h}\in V_{h}(\O)$
 such that
\begin{equation}\label{u-h}
a_h(u_{h},v) = (f,v)_\O+\left<g_N, v\right>_{\Gamma_{f}} 
\quad \forall v \, \in V_h(\O),
\end{equation}
find $p_{h} \in V_h(\O)$ such that
\begin{equation}\label{p-h}
a_h(p_{h},v) =  h^{-1}\left<u_h  -g_D , v\right>_{\Gamma_{f}} \quad \forall \,v \, \in V_h(\O).
\end{equation}

%In the numerics, $\gamma$ is chosen to be $ 0.1$ and $\beta $ is chosen to be $10$. \textcolor{red}{DOUBLE CHECK!}
\begin{remark}
Note that in the above formulations all Dirichlet boundary conditions are imposed weakly using Nitsche's method \cite{nitsche1971variationsprinzip}.
\end{remark}

%Based on the numerical solutions $u_h$ and $p_h$ we now define the numerical Lagrangian,
%\begin{equation}
%\begin{split}
%\mathcal{L}_h(\O, u_h, p_h) = \dfrac{1}{2}h^{-1}\| g_D -u_h\|^2_{\Gamma_{f}} - a_h(u_h,p_h) 
%+(f,p_h)_\O+\left<g_N, p_h\right>_{\Gamma_{f}}.
%\end{split}
%\end{equation}

\section{Shape derivatives}\label{sec:shape-derivatives}
In this section, we aim to derive the formulas for different types of shape derivatives. We will first discuss some basic definitions and derive shape derivatives for bulk quantities, this is standard textbook material and essentially follows \cite{SZ92,delfour2011shapes}. Then we extend these arguments to functionals defined on lower dimensional subsets, that are useful for the approximation of the shape derivative of the CutFEM formulation.

\subsection{Definition of the shape derivative}
For $\O \in \mathcal{O}$, we let $W(\O, \mathbb{R}^2)$ denote the space of sufficiently smooth vector fields $\bftheta: \Omega \rightarrow \mathbb{R}^2$ such that $\bftheta \equiv 0$ on $\Gamma_{f}$.  For a vector field , $\bftheta \in W(\O, \mathbb{R}^2)$, we define the map
\begin{equation}
T_{t, \bftheta}: x \in \O  \rightarrow x + t \bftheta(x) \in \O_t(\bftheta) \subset \mathbb{R}^2.
\end{equation}
The variable $t$ is interpreted as the pseudo-time. For small $t$ the mapping $\O \rightarrow \O_t(\bftheta)$ is assumed to be a bijection. We also assume that $\O_t(\bftheta) \in \mathcal{O}$ for any $t \in I=\{-\delta, \delta\}$, with $\delta>0$ small enough. When there is no risk of confusion, we let $\O_t = \O_t(\bftheta)$. 

The shape derivative of the cost functional $J(\O)$ with respect to the domain $\O$ in the direction of $\bftheta$ is defined as
\begin{equation}
D_{\O, \bftheta} J(\O) := \lim_{t \to 0} \dfrac{1}{t}(J(\O_t(\bftheta)) - J(\O)). 
\end{equation}

For a scalar function $v(x,t): \O \times I \rightarrow \mathbb{R}$ that is smooth enough, we define the material  derivative in the direction $\bftheta$ by
\begin{equation}
D_{t,\bftheta} v(x) = \lim_{t \rightarrow 0} \dfrac{v(x(t),t) - v(x(0),0)}{t}
\end{equation} 
where $x(t) = T_{t, \bftheta}(x)) = x + t \bftheta(x)$ and $x(0) = x$.
We also define the pseudo-time derivative by 
\begin{equation}
	\partial_t v(x)= \lim_{t \rightarrow 0} \dfrac{v(x,t) - v(x,0)}{t}.
\end{equation} 
%Immediately we  have that
%\begin{equation}\label{material-derivative}
%D_{t,\bftheta}\, v= \lim_{t \rightarrow 0} \dfrac{v(\mcM_\bftheta(x,t),t) - v(x,t)}{t} +
% \lim_{t \rightarrow 0} \dfrac{v(x,t) - v(x,0)}{t} = \nabla v \cdot \bftheta + \partial_t v.
%\end{equation} 
%Define the time derivative by
%\begin{equation}
%\partial_t v = \lim_{t \to 0}\dfrac{v(x,t) - v(x,0)}{t}
%\end{equation}
By the chain rule it is easy to see that
\begin{equation}\label{material-derivative-to-}
D_{t,\bftheta}\, v = \partial_t v + \bftheta \cdot \nabla v.
\end{equation}
The product rule holds for the material derivative:
\begin{equation}
D_{t,\bftheta}\, (vw) = w D_{t,\bftheta} v + v D_{t,\bftheta}\, w.
\end{equation}
For future reference, we introduce the notation $\dot{v} := D_{t,\bftheta} v$ and $v':= \partial_t v$.

%We also give the following properties \textcolor{red}{(add a reference here.)}:
%\begin{equation}
%	D_{t,\bftheta} (vw) = (D_{t,\bftheta} v)w  + (D_{t,\bftheta} w)v
%\end{equation} 

\begin{lemma} \label{lem:shape-derivative}
Let $\O$ be an open set in $\mathbb{R}^2$ and $\bftheta: \mathbb{R}^{2} \rightarrow  \mathbb{R}^{2}$ be an injective differentiable mapping. Then the following equalities hold:
\begin{equation}\label{shape-derivative-on-domain-and-boundary}
\begin{split}
	D_{\O,\bftheta} \int_{\O} \phi \, dx &= \int_\O (\dot{ \phi} + (\nabla \cdot \bftheta) \phi) \,dx\\
	D_{\O,\bftheta} \int_{\Gamma} \psi \, ds &= \int_{\Gamma} (\dot{ \psi} + 
	(\nabla_{\Gamma} \cdot \bftheta) \psi) \,ds
\end{split}
\end{equation} 
where we assume that $\phi(x,t),\psi(x,t): \mathbb{R}^2 \times I \rightarrow \mathbb{R}$ are functions smooth enough for the expressions of \eqref{shape-derivative-on-domain-and-boundary} to be well defined and
where $\nabla_{\Gamma}\cdot \bftheta = \nabla \cdot \bftheta - \bfn \cdot D\bftheta \cdot \bfn^t $.
\end{lemma}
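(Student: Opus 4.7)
The plan is to reduce both identities to integrals over the reference (undeformed) domain via the change of variables induced by $T_{t,\bftheta}$, differentiate at $t=0$, and then pull the result back to a statement intrinsic to $\O$ or $\Gamma$.

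For the volume identity, I would parametrize $\O_t$ by $y \in \O$ through $x = T_{t,\bftheta}(y) = y + t\bftheta(y)$, so that
\begin{equation*}
\int_{\O_t} \phi(x,t)\,dx = \int_{\O} \phi(T_{t,\bftheta}(y),t)\,J_t(y)\,dy,
\qquad J_t(y) := \det(I + t\,D\bftheta(y)).
\end{equation*}
Since $\bftheta$ is an injective differentiable map and $t$ is small, $J_t > 0$, and Jacobi's formula gives $\partial_t J_t|_{t=0} = \operatorname{tr}(D\bftheta) = \nabla\cdot\bftheta$. Combined with $\partial_t \phi(T_{t,\bftheta}(y),t)|_{t=0} = \dot{\phi}(y)$ (the definition of the material derivative) and the product rule inside the integral over the fixed domain $\O$, we obtain the first claim. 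The only care needed is a dominated-convergence argument to pass $\partial_t$ inside the integral, which is available from the assumed smoothness of $\phi$ and $\bftheta$.

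For the boundary identity, the structure is the same but the surface element must be tracked. Under $T_{t,\bftheta}$ the surface element transforms as
\begin{equation*}
ds_t = J_t\,\|DT_{t,\bftheta}^{-T}\bfn\|\,ds = J_t\,|(\operatorname{cof}\,DT_{t,\bftheta})\,\bfn|\,ds,
\end{equation*}
so that
\begin{equation*}
\int_{\Gamma_t} \psi(x,t)\,ds_t = \int_{\Gamma} \psi(T_{t,\bftheta}(y),t)\,\omega_t(y)\,ds,
\qquad \omega_t(y) := J_t\,\|DT_{t,\bftheta}^{-T}\bfn\|.
\end{equation*}
The main computational step is to differentiate $\omega_t$ at $t=0$. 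Using $\partial_t (DT_{t,\bftheta})|_{t=0} = D\bftheta$ together with $\partial_t (DT_{t,\bftheta}^{-T})|_{t=0} = -D\bftheta^T$ and $\omega_0 = 1$, one gets
\begin{equation*}
\partial_t \omega_t\big|_{t=0} = \nabla\cdot\bftheta - \bfn\cdot D\bftheta\cdot \bfn^t,
\end{equation*}
which is exactly the tangential divergence $\nabla_\Gamma\cdot\bftheta$ as defined in the statement. Combining with $\partial_t \psi(T_{t,\bftheta}(y),t)|_{t=0} = \dot{\psi}$ via the product rule yields the second identity.

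The hard part, or at least the one that requires care, is the differentiation of $\omega_t$ and the verification that the tangential identity $\nabla_\Gamma\cdot\bftheta = \nabla\cdot\bftheta - \bfn\cdot D\bftheta\cdot\bfn^t$ indeed matches the limit obtained from the cofactor expansion; the volume piece, by contrast, is essentially a direct Jacobi-formula computation. Throughout, I would assume $\bftheta \in W(\O,\mathbb{R}^2)$ smooth enough and $\Gamma$ at least $C^1$ so that $\bfn$ extends smoothly to a tubular neighborhood, which is already guaranteed by the regularity framework introduced in the section.
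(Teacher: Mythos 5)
Your proposal is correct and follows essentially the same route as the paper's proof: change of variables via $T_{t,\bftheta}$, differentiation of the volume Jacobian $\mu(t)=\det(DT_{t,\bftheta})$ and the surface factor $\omega(t)=\mu(t)\,|DT_{t,\bftheta}^{-t}\bfn|$ at $t=0$, followed by the product rule. The only difference is cosmetic: you compute $\partial_t\omega_t|_{t=0}$ explicitly from the cofactor expansion, whereas the paper cites the corresponding formulas from Delfour--Zol\'esio.
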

\begin{proof}
We give a brief sketch of the proof below to make the presentation self contained. This exposition follows the arguments in \cite{delfour2011shapes}.
\[
	\int_{\O_t(\bftheta)} \phi(x,t) \,dx = \int_{\O} \phi \circ T_{t, \bftheta} \mu_t\,dx=
	\int_{\O} \phi((x(t), t)  \mu(t)\,dx
\]
where $\mu(t) = \det (DT_{t, \bftheta})$ and $x(t) = x + t \bftheta(x)$. Note that $\mu(0)=1$.
By definition we have
\begin{equation}
\begin{split}
	 D_{\O,\bftheta} \int_{\O} \phi \, dx=& \lim_{t \to 0} \dfrac{1 } {t} \left(\int_{\O_{t}(\bftheta)} \phi(x,t) \,dx - \int_{\O} \phi(x,0) \,dx \right)\\
	=&\lim_{t \to 0} \int_{\O}\dfrac{1}{t} \left( \phi(x(t), t)\mu_t - \phi(x, 0)\mu_0 \right)\,dx\\
	=&\int_\Omega  \dot{\phi}(x,0) dx + \int_\Omega  \phi(x,0) \divvr \bftheta dx
\end{split}
\end{equation}
where we have used the fact that (see Example 3.1 in \cite{delfour2011shapes}) 
%@book{delfour2011shapes,
%  title={Shapes and geometries: metrics, analysis, differential calculus, and optimization},
%  author={Delfour, Michel C and Zol{\^a}sio, J-P},
%  volume={22},
%  year={2011},
%  publisher={Siam}
%}
\[
	\lim_{t \to 0} \dfrac{1}{t}(\mu(t) - \mu(0))= \divvr \bftheta.
\]
To prove the second part of \cref{shape-derivative-on-domain-and-boundary} we have
that
\[
	\int_{\Gamma_{\O_t( \bftheta)}} \phi(x,t) \,dx = \int_{\Gamma_\O} \phi \circ T_{t, \bftheta} \o(t)\,dx=
	\int_{\Gamma_\O} \phi(x(t), t)  \o(t)\,dx
\]
where
$\o(t) = \mu(t) |(DT_{t,\bftheta})^{-t} \cdot \bfn|$. Note that $\o(0)=1$.  Finally, combining the fact that
\[
	\lim_{t \to 0}\dfrac{1}{t}(\o(t) - \o(0)) = \divvr \bftheta - (D\bftheta \cdot \bfn)  \cdot \bfn
\]
gives the second part of \cref{shape-derivative-on-domain-and-boundary}. This completes the proof of the lemma.
\end{proof}

\begin{lemma}\label{lem:shape-derivative1}
The following relation holds:
\begin{equation}\label{eq:domegathera}
\begin{split}
D_{\O, \bftheta} \int_{\O} \nabla w \cdot \nabla v \,dx 
=&	    \int_{\O} (\nabla \cdot \bftheta) \nabla w \cdot \nabla v   
	    - \nabla w \cdot (D \bftheta + (D\bftheta)^t) \nabla v \,dx\\
&	     + \int_{\O} \nabla \dot{w} \cdot \nabla v  +  \nabla \dot{v} \cdot \nabla w \,dx,
\end{split}
\end{equation}
where we assume that $w(x,t), v(x,t): \mathbb{R}\times I \rightarrow \mathbb{R}$ are functions smooth enough for \eqref{eq:domegathera} to be well defined.
\end{lemma}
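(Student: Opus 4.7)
The plan is to apply \cref{lem:shape-derivative} with the choice $\phi = \nabla w \cdot \nabla v$. This immediately gives
\[
D_{\O,\bftheta} \int_{\O} \nabla w \cdot \nabla v \, dx = \int_{\O} D_{t,\bftheta}(\nabla w \cdot \nabla v) \, dx + \int_{\O} (\divvr \bftheta)\, \nabla w \cdot \nabla v \, dx,
\]
so the task reduces to expanding $D_{t,\bftheta}(\nabla w \cdot \nabla v)$ into the form appearing in the statement. I would first invoke the product rule for the material derivative to write $D_{t,\bftheta}(\nabla w \cdot \nabla v) = D_{t,\bftheta}(\nabla w) \cdot \nabla v + \nabla w \cdot D_{t,\bftheta}(\nabla v)$, after which the only remaining ingredient is an expression for $D_{t,\bftheta}(\nabla w)$ in terms of $\nabla \dot w$ and the Jacobian $D\bftheta$.

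The key (and only nontrivial) computation is the commutator between the gradient and the material derivative. Applying $\nabla$ to the identity $\dot w = w' + \bftheta \cdot \nabla w$ from \eqref{material-derivative-to-} and using the elementary rule $\nabla(\bftheta \cdot \nabla w) = (D\bftheta)^t \nabla w + \bftheta \cdot \nabla(\nabla w)$, I obtain
\[
\nabla \dot w = (\nabla w)' + (D\bftheta)^t \nabla w + \bftheta \cdot \nabla(\nabla w) = D_{t,\bftheta}(\nabla w) + (D\bftheta)^t \nabla w,
\]
so $D_{t,\bftheta}(\nabla w) = \nabla \dot w - (D\bftheta)^t \nabla w$, and similarly for $v$. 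Substituting back into the product rule yields
\[
D_{t,\bftheta}(\nabla w \cdot \nabla v) = \nabla \dot w \cdot \nabla v + \nabla w \cdot \nabla \dot v - (D\bftheta)^t \nabla w \cdot \nabla v - \nabla w \cdot (D\bftheta)^t \nabla v.
\]

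To finish, I would symmetrize the two Jacobian terms. Writing them out in index notation gives the transpose identity $(D\bftheta)^t \nabla w \cdot \nabla v = \nabla w \cdot D\bftheta \nabla v$, so the last two terms combine into $-\nabla w \cdot (D\bftheta + (D\bftheta)^t)\nabla v$. Plugging this back into the first display produces exactly \eqref{eq:domegathera}. The main obstacle in the proof is really just the non-commutation of $\nabla$ and $D_{t,\bftheta}$: one must carefully retain the correction term $(D\bftheta)^t \nabla w$, and only after using the transpose identity does the expression collapse into the symmetric form $D\bftheta + (D\bftheta)^t$ advertised in the statement. Everything else is a direct application of \cref{lem:shape-derivative} and the product rule.
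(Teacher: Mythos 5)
Your proposal is correct, and it reaches \eqref{eq:domegathera} by a genuinely different route than the paper. The paper pulls the integral back to the reference domain, uses the chain rule $(\nabla u)\circ T_t = DT_{t,\bftheta}^{-t}\cdot\nabla(u\circ T_t)$ to package the geometry into the matrix $A(t)=\mu(t)\,DT_t^{-1}(DT_t)^{-t}$, and then differentiates $A(t)$ at $t=0$ to produce the term $(\nabla\cdot\bftheta)I - S(\bftheta)$ all at once. You instead treat $\phi=\nabla w\cdot\nabla v$ as a scalar field, apply \cref{lem:shape-derivative} as a black box, and then do the only nontrivial work in the commutator identity $D_{t,\bftheta}(\nabla w)=\nabla\dot w-(D\bftheta)^t\nabla w$, obtained by differentiating $\dot w = w'+\bftheta\cdot\nabla w$; the transpose identity $\bigl((D\bftheta)^t\nabla w\bigr)\cdot\nabla v=\nabla w\cdot(D\bftheta\,\nabla v)$ then symmetrizes the two correction terms into $-\nabla w\cdot S(\bftheta)\nabla v$, matching the paper. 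Both arguments are sound and of comparable length; yours has the advantage of reusing \cref{lem:shape-derivative} rather than repeating the change-of-variables computation, while the paper's version has the advantage that the matrix $A(t)$ and its derivative \eqref{dAdx} are needed again verbatim in \cref{lem:shape-derivative4} and in the appendix proof of \cref{lemma4.4}, so introducing it here amortizes the work. The one point you should state explicitly rather than leave implicit is that the material derivative appearing in \cref{lem:shape-derivative} for $\phi=\nabla w\cdot\nabla v$ is the material derivative of the gradient fields themselves (i.e.\ $\dot\phi = D_{t,\bftheta}(\nabla w)\cdot\nabla v + \nabla w\cdot D_{t,\bftheta}(\nabla v)$, not $\nabla\dot w\cdot\nabla v+\nabla w\cdot\nabla\dot v$); you do handle this correctly, and it is exactly why the commutator correction appears.
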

\begin{proof}
By change of variables, we have
\begin{equation}
	\begin{split}
	\lim_{t \to 0} 
		 & \dfrac{1}{t} \left( \int_{\O_t(\bftheta)} \nabla w(x,t) \cdot \nabla v(x,t) \,dx - \int_{\O} \nabla w(x,0) \cdot \nabla v(x,0) \,dx  \right)\\
		 =&\lim_{t \to 0} \dfrac{1}{t} \left( \int_{\O} ((\nabla w \circ T_t) \cdot (\nabla v \circ T_t)  \mu(t) \,dx 
		 - \int_{\O} \nabla w(x,0) \cdot \nabla v(x,0) \,dx  \right)\\
		=&\lim_{ t \rightarrow 0} \dfrac{1}{t} 
		\left( \int_{\O} \left( A(t) \cdot\nabla (w \circ T_t)\right) \cdot \nabla (v \circ T_t) \,dx  
		 -   \int_{\O} \nabla w \cdot \nabla v \,dx  \right)\\
	  =&
	    \int_{\O} ( A'(t)\cdot  \nabla w ) \cdot  \nabla v 
	     + \nabla \dot{w} \cdot \nabla v  +  \nabla \dot{v} \cdot \nabla w \,dx,
	\end{split}
\end{equation}
where we used the chain rule
\[
 (\nabla u) \circ T_t  = DT_{t, \bftheta}^{-t} \cdot \nabla (u \circ T_t)
 \]
and introduced $A(t)$ and its derivative
 \begin{equation}\label{dAdx}
	A(t) =\mu(t) DT_t^{-1} (DT_t)^{-t}, \quad
	\quad
	A'(t) = \nabla \cdot \bftheta I - (D\bftheta + (D\bftheta)^t),
\end{equation}
and finally we employed the product rule. This completes the proof of the lemma.
\end{proof}

\subsubsection{Shape derivatives of boundary and face terms}
For the sake of simplicity, we denote by $S(\bftheta) =D \bftheta + (D \bftheta)^t. $
\begin{lemma}\label{lem:shape-derivative4}
The following relation holds:
\begin{equation}\label{eq:DGammazero}
\begin{split}
D_{\O, \bftheta} \int_{\Gamma_{\O}} (D_n w) v \,ds
&=\int_{\Gamma_{\O}} (( \nabla \cdot \bftheta ) (D_n w)  v
	   - (S(\bftheta) \cdot \nabla w) \cdot \bfn v\,ds
\\
&\qquad +	\int_{\Gamma_{\O}} 
	   	    (D_n \dot w ) v \,ds
	   + (\nabla  w \cdot \bfn) \dot v \,ds.
\end{split}
\end{equation}
where we assume that $w(x,t), v(x,t): \mathbb{R}\times I \rightarrow \mathbb{R}$ are functions smooth enough for \eqref{eq:DGammazero} to be well defined. 
\end{lemma}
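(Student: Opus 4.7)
The plan is to mimic the change-of-variables argument used in \cref{lem:shape-derivative1} but applied to a boundary integral, exploiting the happy fact that the surface Jacobian cancels the normalization factor of the transformed normal. This avoids the alternative route of invoking \cref{lem:shape-derivative} directly with $\psi = (D_n w) v$, which would force me to compute $D_{t,\bftheta}\bfn$ and then rearrange tangential divergences.

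First I would pull the integral $\int_{\Gamma_{\O_t(\bftheta)}} (\nabla w(x,t) \cdot \bfn_t) v(x,t)\, ds$ back to $\Gamma_\O$ using $T_{t,\bftheta}$. Two transformation rules enter: the surface element satisfies $ds_t = \omega(t)\, d\hat s$ with $\omega(t) = \mu(t)\,|(DT_{t,\bftheta})^{-t}\bfn|$, and the outward unit normal satisfies the Piola-type identity $\bfn_t \circ T_{t,\bftheta} = (DT_{t,\bftheta})^{-t}\bfn / |(DT_{t,\bftheta})^{-t}\bfn|$. Also, by the chain rule, $\nabla w \circ T_{t,\bftheta} = (DT_{t,\bftheta})^{-t}\nabla(w \circ T_{t,\bftheta})$. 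Combining these three identities the integrand on $\Gamma_\O$ becomes
\begin{equation*}
\nabla(w \circ T_{t,\bftheta}) \cdot \bigl[\mu(t)(DT_{t,\bftheta})^{-1}(DT_{t,\bftheta})^{-t}\bfn\bigr]\,(v \circ T_{t,\bftheta})
= \nabla(w \circ T_{t,\bftheta}) \cdot A(t)\bfn\;(v \circ T_{t,\bftheta}),
\end{equation*}
where $A(t) = \mu(t)DT_{t,\bftheta}^{-1}(DT_{t,\bftheta})^{-t}$ is exactly the matrix introduced in \eqref{dAdx}; the factor $|(DT_{t,\bftheta})^{-t}\bfn|$ appearing in $\omega(t)$ and in the denominator of $\bfn_t$ cancel. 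This cancellation is the main technical point of the proof; once noticed it reduces the problem to a purely algebraic differentiation.

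Next I would differentiate the integrand in $t$ at $t=0$ using the product rule. The three contributions are (i) $\nabla \dot w \cdot \bfn\, v$ from the $w$-factor, (ii) $\nabla w \cdot A'(0)\bfn\, v$ from the middle factor, and (iii) $(\nabla w \cdot \bfn)\dot v$ from the $v$-factor. Using \eqref{dAdx}, $A'(0)\bfn = (\nabla\cdot\bftheta)\bfn - S(\bftheta)\bfn$. Since $S(\bftheta)$ is symmetric, $\nabla w \cdot S(\bftheta)\bfn = (S(\bftheta)\nabla w)\cdot \bfn$, and piecing together the three terms gives exactly \eqref{eq:DGammazero}. The smoothness assumptions on $w, v$ are just what is needed for the limits in $t$ to pass inside the integral and for $\nabla(w \circ T_{t,\bftheta})$ to be differentiable in $t$, which is the same regularity already tacitly used in \cref{lem:shape-derivative1}.
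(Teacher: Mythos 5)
Your proposal is correct and follows essentially the same route as the paper: pull the boundary integral back via $T_{t,\bftheta}$, observe that the factor $|(DT_{t,\bftheta})^{-t}\bfn|$ from the surface Jacobian $\omega(t)$ cancels the normalization of $\bfn_t\circ T_{t,\bftheta}$ so that the integrand collapses to $\nabla(w\circ T_{t,\bftheta})\cdot A(t)\bfn\,(v\circ T_{t,\bftheta})$ with the same matrix $A(t)$ as in \eqref{dAdx}, and then differentiate by the product rule using $A'(0)=(\nabla\cdot\bftheta)I-S(\bftheta)$. The only difference is presentational: you make the cancellation explicit where the paper labels it ``a direct calculation.''
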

\begin{proof}
First by change of variable we have 
	\begin{equation}
	\begin{split}
	 \int_{\Gamma_{\O_t}} \nabla w(x,t)\cdot \bfn_t v(x,t) \,ds &= 
	 \int_{\Gamma_{\O}} (\nabla w \circ T_t) \cdot (\bfn_t \circ T_t) (v \circ T_t) \o(t)\,ds \\
	 &=
	  \int_{\Gamma_{\O}} (DT_t^{-t} \cdot \nabla (w \circ T_t)) \cdot (\bfn_t \circ T_t) (v \circ T_t) \o(t)\,ds.
	 \\
	\end{split}
\end{equation}
From Theorem 4.4 in \cite{delfour2011shapes} it holds that
\[
	\bfn_t \circ T_t = \dfrac{DT_t^{-t} \cdot \bfn}{|DT_t^{-t} \cdot \bfn|}.
\]
%@book{delfour2011shapes,
%  title={Shapes and geometries: metrics, analysis, differential calculus, and optimization},
%  author={Delfour, Michel C and Zol{\^a}sio, J-P},
%  volume={22},
%  year={2011},
%  publisher={Siam}
%}
Recall that $\o_t = \mu(t) |DT_t^{-t} \cdot \bfn|$. By a direct calculation we have
	\begin{equation}\label{normal-sd-a}
	\begin{split}
	 & \int_{\Gamma_{\O_t}} (\nabla w(x,t)\cdot \bfn_t) v(x,t) \,ds 
	 =
	  \int_{\Gamma_{\O}} (A(t) \cdot \nabla (w \circ T_t)) \cdot \bfn (v \circ T_t)\,ds
	\end{split}
	\end{equation}
	 Finally, combing \cref{normal-sd-a} and \cref{dAdx} gives
	 \begin{equation}
	 	\begin{split}
		&D_{\O, \bftheta} \int_{\Gamma_{\O}} \nabla w \cdot \bfn v \,ds
	 =
	  \int_{\Gamma_{\O}} (A'(t) \cdot (\nabla w \cdot \bfn)  v\, + (\nabla \dot w \cdot \bfn) v \,ds
	   + (\nabla w \cdot \bfn) \dot v \,ds\\
	   = &
	   \int_{\Gamma_{\O}} (( \nabla \cdot \bftheta ) (\nabla w \cdot \bfn)  v
	   - (S(\bftheta) \cdot \nabla w) \cdot \bfn v
	   	    + (\nabla w \cdot \bfn) \dot v \,ds
	   + (\nabla \dot w \cdot \bfn) v \,ds.
	\end{split}
\end{equation}
This completes the proof of the lemma.
\end{proof}
The stability of the CutFEM method is ensured by the ghost penalty term. In the following Lemma we give a result allowing the integration of the effect of this term in the shape gradient. The proof is given in the appendix.

\begin{lemma}\label{lemma4.4}
%\todo{check the space for w and v}
Assume that $w,v \in H^1(\Omega,t)$ and that locally on each triangle $K$, $w(x,t)\vert_K,\,v(x,t)\vert_K \in H^{3/2+\epsilon}(K)$. Then there holds
\begin{equation}
	D_{\O,\bftheta}\int_F \jump{D_n w} \jump{D_n v} \,ds 
	=  \int_{F} ( \jump{\nabla \dot w \cdot \bn} \jump{\nabla  v\cdot \bn} 
	+  \jump{\nabla w \cdot \bn} \jump{\nabla  \dot v\cdot \bn} ) \,ds + \epsilon_F(w,v)
\end{equation}
where
\begin{equation}
\begin{split}
	\epsilon_F(w,v)
	=
	& \int_{F} \jump{ (\nabla \cdot \bftheta) \nabla w \cdot \bn 
	- \nabla w \cdot S(\bftheta)  \cdot \bn} \jump{\nabla  v\cdot \bn} \,ds \\
	&+ \int_{F} \jump{ (\nabla \cdot \bftheta) \nabla v \cdot \bn
	 - \nabla v \cdot S(\bftheta) \cdot \bn} \jump{\nabla  w\cdot \bn} \,ds \\
	& - \int_{F} \jump{\nabla w \cdot \bn} \jump{\nabla  v\cdot \bn} \, (\divvr \bftheta - (D\bftheta \cdot \bfn)  \cdot \bfn)ds.
\end{split}
\end{equation}
\end{lemma}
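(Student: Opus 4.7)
The strategy is to mimic the change-of-variables argument of Lemma~\ref{lem:shape-derivative4} and apply it to the interior face $F$, regarded as transported by the flow $T_{t,\bftheta}$ to $F_t=T_{t,\bftheta}(F)$ with unit normal $\bn_t$. The first step is to eliminate the jumps by a one-sided decomposition. Since $\bftheta$ and $D\bftheta$ are smooth across $F$, while only the traces of $\nabla w$ and $\nabla v$ from each side can differ, write
\[
    \jump{D_n w}\jump{D_n v}=\sum_{\sigma,\tau\in\{+,-\}}(-1)^{[\sigma\neq\tau]}(D_n w^\sigma)(D_n v^\tau),
\]
and compute $D_{\O,\bftheta}\int_{F_t}(\nabla w^\sigma\cdot\bn_t)(\nabla v^\tau\cdot\bn_t)\,ds$ for each of the four one-sided face integrals.

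Pulling back via $T_{t,\bftheta}$ as in the proof of Lemma~\ref{lem:shape-derivative4}, with the surface Jacobian $\omega(t)=\mu(t)|DT_t^{-t}\bn|$ and $\bn_t\circ T_t=DT_t^{-t}\bn/|DT_t^{-t}\bn|$, each one-sided integral transforms to
\[
    \int_F\frac{\mu(t)}{|DT_t^{-t}\bn|}\bigl(B(t)\nabla(w^\sigma\circ T_t)\cdot\bn\bigr)\bigl(B(t)\nabla(v^\tau\circ T_t)\cdot\bn\bigr)\,ds,\qquad B(t):=DT_t^{-1}DT_t^{-t}.
\]
Differentiating at $t=0$ using $\mu'(0)=\nabla\cdot\bftheta$, $B'(0)=-S(\bftheta)$, $\partial_t|DT_t^{-t}\bn||_{t=0}=-(D\bftheta\bn)\cdot\bn$, and $\partial_t\nabla(w^\sigma\circ T_t)|_{t=0}=\nabla\dot w^\sigma$, yields three groups of terms: a material-derivative part $(\nabla\dot w^\sigma\cdot\bn)(D_n v^\tau)+(D_n w^\sigma)(\nabla\dot v^\tau\cdot\bn)$; a symmetric $B'(0)$ contribution $-(S(\bftheta)\nabla w^\sigma\cdot\bn)(D_n v^\tau)-(D_n w^\sigma)(S(\bftheta)\nabla v^\tau\cdot\bn)$; and a scalar geometric factor $\bigl(\nabla\cdot\bftheta+(D\bftheta\bn)\cdot\bn\bigr)(D_n w^\sigma)(D_n v^\tau)$ coming from the prefactor $\mu(t)/|DT_t^{-t}\bn|$.

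Summing the four $(\sigma,\tau)$ pieces with signs $(-1)^{[\sigma\neq\tau]}$ restores the jump brackets on the derivative factors, using that $\bftheta$, $S(\bftheta)$, $\nabla\cdot\bftheta$, and $(D\bftheta\bn)\cdot\bn$ are continuous across $F$ and therefore commute with the jump. The main material-derivative part immediately reproduces the leading term of the lemma, while the geometric contributions combine into the three lines of $\epsilon_F(w,v)$ after the algebraic rearrangement
\[
\nabla\cdot\bftheta+(D\bftheta\bn)\cdot\bn\;=\;2(\nabla\cdot\bftheta)-\bigl(\nabla\cdot\bftheta-(D\bftheta\bn)\cdot\bn\bigr),
\]
which redistributes the scalar coefficient as two copies of $(\nabla\cdot\bftheta)\jump{D_n w}\jump{D_n v}$ (absorbed into lines one and two of $\epsilon_F$ alongside the $S(\bftheta)$ terms) plus the asymmetric remainder $-(\nabla_F\cdot\bftheta)\jump{D_n w}\jump{D_n v}$ forming line three.

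The main obstacle, and the point at which this computation departs from the proof of Lemma~\ref{lem:shape-derivative4}, is the handling of the factor $|DT_t^{-t}\bn|^{-1}$. In Lemma~\ref{lem:shape-derivative4} only one factor of the integrand carried a normal derivative, so the surface Jacobian $\omega(t)$ cancelled cleanly against $\bn_t$, leaving the clean matrix $A(t)=\mu(t)B(t)$; here both factors contain $\bn_t$, so one copy of $|DT_t^{-t}\bn|$ survives in the denominator, and its derivative $-(D\bftheta\bn)\cdot\bn$ is precisely what produces the asymmetric term in line three of $\epsilon_F$. A secondary subtlety is the low regularity hypothesis $w|_K,\,v|_K\in H^{3/2+\epsilon}(K)$: it is just enough to give $L^2$-traces of $\nabla w\cdot\bn$ and $\nabla v\cdot\bn$ on $F$ from each side but precludes any integration by parts across $F$ that would bring in Hessians, which is why the pullback route, keeping all expressions first order in the derivatives of $w,v$ via $\nabla(w\circ T_t)$, is the natural one.
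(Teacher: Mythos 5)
Your proof is correct and follows essentially the same route as the paper's appendix proof: pull the face integral back through $T_{t,\bftheta}$, use $\bn_t\circ T_t = DT_t^{-t}\bn/|DT_t^{-t}\bn|$ and the chain rule for $\nabla(w\circ T_t)$, and differentiate the Jacobian factors at $t=0$. The only difference is cosmetic — the paper groups the factors as $\jump{A(t)\nabla(w\circ T_t)\cdot\bn}\jump{A(t)\nabla(v\circ T_t)\cdot\bn}\,\omega^{-1}(t)$ with $A(t)=\mu(t)DT_t^{-1}DT_t^{-t}$, whereas you keep $B(t)=DT_t^{-1}DT_t^{-t}$ and a scalar prefactor $\mu(t)/|DT_t^{-t}\bn|$; your identity $\nabla\cdot\bftheta+(D\bftheta\,\bn)\cdot\bn = 2(\nabla\cdot\bftheta)-(\nabla\cdot\bftheta-(D\bftheta\,\bn)\cdot\bn)$ correctly reconciles the two groupings.
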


\subsection{Optimize-then-discretize approach}
In this subsection we first analyse the shape optimization based on the optimize-then-discretize approach, i.e., the representation for the shape derivative is computed based 
on the continuous problems. In the numerical approximation, we will simply replace the continuous solutions by the numerical ones. Note that for this approach the formula of the shape derivative is then independent of the numerical method used to approximate the solutions. Therefore, the shape derivative is not exact since by assumption its input is assumed to be the true solutions while in reality it is evaluated using their approximations. The error in the gradient will be of optimal order asymptotically, if the CutFEM solution has optimal error estimates in $W^{1,4}(\Omega)$ and $L^4(\Omega)$, see \cite{BEHLL17}. 
%\textcolor{red}{(Does optimal error estimates in $W^{1,4}(\Omega)$ implies optimal error estimates in  $L^4(\Omega)$?)}\todo{fix}

On $\O_t(\bftheta)$, $t \in [0, \tau]$ we define $u(x,t) \in H_{0, \Gamma_{\O_t}}^1$ and $p(x,t) \in H_{0, \Gamma_{\O_t}}^1$ such that
\begin{equation}
	(\nabla u(x,t), \nabla v)_{\O_t} = (f, v)_{\O_t} +\left< g_N, v\right>_{\Gamma_{f}}
	\quad \forall \, v \in H_{0, \Gamma_{\O_t}}^1
\end{equation}
and
\begin{equation}
	(\nabla v, \nabla p(x,t))_{\O_t} = h^{-1}\left<u(x,t) - g_D, v\right>_{\Gamma_{f}} 
	\quad \forall \, v \in H_{0, \Gamma_{\O_t}}^1.
	\end{equation}
Immediately we have that  
$\dot p = \dot u = 0$ on $\Gamma_{\O}$, therefore $\dot u \in H_{0,\Gamma_{\O}}(\O)$ and $\dot p \in H_{0,\Gamma_{\O}}(\O)$.

%For convenience define $S(\bftheta) = D \bftheta + (D\bftheta)^t$.
\begin{lemma}\label{lemma:sd}
Let $ \mathcal{L}$ be defined in \cref{Lagrangian}. Then its shape derivative has the following representation:
\begin{equation}\label{shape-derivative-formula}
\begin{split}
&D_{\O, \bftheta} \mathcal{L}(\O, u(\O), p(\O)) \\
&\;= \int_{\O}(\divvr \bftheta) \left(f p - \nabla u \cdot \nabla p  \right) \,dx
+  \int_{\O} \nabla u \cdot  S(\bftheta) \cdot \nabla p \,dx
+ \int_{\O} (\nabla f \cdot \bftheta) p \,dx.
\end{split}
\end{equation}
\end{lemma}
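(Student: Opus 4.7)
The plan is to expand $D_{\O,\bftheta}\mathcal{L}$ term by term using Lemmas 4.1 and 4.2, exploit the fact that $\bftheta \equiv 0$ on $\Gamma_{f}$ (so $\Gamma_{f}$ is fixed under the flow and its surface-divergence contribution vanishes), and then eliminate the material-derivative terms by inserting $\dot u$ and $\dot p$ as admissible test functions in the state and adjoint equations.

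First I would write $\mathcal{L} = \tfrac{1}{2}h^{-1}\|g_D-u\|^2_{\Gamma_{f}} - (\nabla u,\nabla p)_\O + (f,p)_\O + \langle g_N,p\rangle_{\Gamma_{f}}$ and differentiate each summand separately. For the two $\Gamma_{f}$-integrals, since $\bftheta$ vanishes on $\Gamma_{f}$ the domain $\Gamma_{f,t}=\Gamma_{f}$, and only the material-derivative contributions of $u$ and $p$ survive, producing $h^{-1}\langle u-g_D,\dot u\rangle_{\Gamma_{f}}$ and $\langle g_N,\dot p\rangle_{\Gamma_{f}}$ respectively. For the bulk bilinear form, Lemma 4.2 applied to $w=u$, $v=p$ yields
\begin{equation*}
\int_\O (\nabla\cdot\bftheta)\nabla u\cdot\nabla p - \nabla u\cdot S(\bftheta)\cdot\nabla p\,dx + \int_\O \nabla\dot u\cdot\nabla p + \nabla\dot p\cdot\nabla u\,dx.
\end{equation*}
For $(f,p)_\O$, Lemma 4.1 together with $\dot f = \bftheta\cdot\nabla f$ (since the datum $f$ does not depend on $t$) gives $\int_\O f\dot p + (\bftheta\cdot\nabla f)p + (\nabla\cdot\bftheta)fp\,dx$.

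Next I would invoke the equations satisfied by $(u,p)$. Because it is stated in the paragraph preceding the lemma that $\dot u,\dot p \in H^1_{0,\Gamma_\O}(\O)$, they are admissible test functions. Testing the primal equation with $\dot p$ and the adjoint equation with $\dot u$ produces
\begin{equation*}
(\nabla u,\nabla\dot p)_\O = (f,\dot p)_\O + \langle g_N,\dot p\rangle_{\Gamma_{f}},\qquad (\nabla\dot u,\nabla p)_\O = h^{-1}\langle u-g_D,\dot u\rangle_{\Gamma_{f}}.
\end{equation*}
Substituting these identities into the differentiated expression cancels exactly the $\dot u$-boundary term against $(\nabla\dot u,\nabla p)_\O$, the $\dot p$-boundary term against $\langle g_N,\dot p\rangle_{\Gamma_{f}}$, and the $(f,\dot p)_\O$ term against its counterpart coming from $D_{\O,\bftheta}(f,p)_\O$. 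What remains is precisely
\begin{equation*}
\int_\O (\nabla\cdot\bftheta)(fp - \nabla u\cdot\nabla p)\,dx + \int_\O \nabla u\cdot S(\bftheta)\cdot\nabla p\,dx + \int_\O (\nabla f\cdot\bftheta)p\,dx,
\end{equation*}
which is the claimed formula.

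I do not expect any real obstacle: the computation is an organized application of the two preceding lemmas followed by a cancellation argument. The only delicate point is justifying that $\dot u$ and $\dot p$ may be used as test functions, but this is precisely what was observed before the lemma statement (using $\bftheta\equiv 0$ on $\Gamma_{f}$ and the fact that $u$, $p$ vanish on the moving boundary $\Gamma_{\O_t}$). A minor bookkeeping point is tracking signs carefully when differentiating $\tfrac{1}{2}h^{-1}\|g_D-u\|^2_{\Gamma_{f}}$, which gives $+h^{-1}\langle u-g_D,\dot u\rangle_{\Gamma_{f}}$ with the correct sign to cancel against the $-(\nabla u,\nabla p)_\O$ contribution.
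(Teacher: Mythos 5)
Your proposal is correct and follows essentially the same route as the paper's proof: the same decomposition of $\mathcal{L}$ into bulk and $\Gamma_{f}$ contributions, the same use of \cref{lem:shape-derivative} and \cref{lem:shape-derivative1}, and the same cancellation obtained by testing the primal equation with $\dot p$ and the adjoint equation with $\dot u$ (admissible since $\dot u,\dot p\in H^1_{0,\Gamma_\O}(\O)$). The sign bookkeeping you flag is handled identically in the paper, which additionally notes that $\dot u = u'$ and $\dot p = p'$ on $\Gamma_{f}$ because $\bftheta$ vanishes there.
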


\begin{proof}
Rearrange $ \mathcal{L}(\O, u, p)$ such that 
\begin{equation}\label{domain-derivative-00}
	 \mathcal{L}(\O, u, p) \triangleq \mathcal{A}_1 + \mathcal{A}_2
\end{equation}
where
\begin{align*}
\mathcal{A}_1 =   -(\nabla u, \nabla p)_\O+ \left( f, p\right)_{\O}, \quad 
\mathcal{A}_2 =
	\dfrac{1}{2} h^{-1} \left<g_D- u, g_D- u\right>_{\Gamma_{f}} + \left<g_N, p\right>_{\Gamma_{f}}.
\end{align*}
%\Gamma_{\O}
By \cref{lem:shape-derivative} and \cref{lem:shape-derivative1}, we firstly have
\begin{equation}\label{domain-derivative-0}
\begin{split}
D_{\bftheta,\O} \mathcal{A}_1
=&-(\divvr \bftheta, \nabla u \cdot \nabla p - fp)_\O
+   \int_\O \nabla u \cdot S(\bftheta) \cdot \nabla p  \,dx\\
&- ( \nabla \dot{u} , \nabla p)_\O
- (\nabla u,\nabla \dot{p})_\O  +(\dot f, p)_\O + (f, \dot p)_\O.
\end{split}
\end{equation}
Note that $\dot f = \nabla f \cdot \bftheta$ since $f'=0$.
Thanks to the fact that $\dot u \in H_{0,\Gamma_{\O}}(\O)$ and $\dot p \in H_{0,\Gamma_{\O}}(\O)$, by  \cref{u-formulation} and
\cref{p-formulation} we have
\begin{equation}\label{domain-derivative-1}
\begin{split}
- ( \nabla \dot{u} , \nabla p)_\O
- (\nabla u,\nabla \dot{p})_\O   + (f, \dot p)_\O
&= -h^{-1}\left< u -g_D, \dot u \right>_{\Gamma_{f}} - \left< g_N, \dot p \right>_{\Gamma_{f}}
\\
&
= -h^{-1}\left< u -g_D, u' \right>_{\Gamma_{f}} - \left< g_N, p' \right>_{\Gamma_{f}}.
\end{split}
\end{equation}
Note that on $\Gamma_{f}$, we have used the fact that $\dot u = u'$ and $\dot p = p'$, since 
$\bftheta =0$ on $\Gamma_{f}$.
By the product and chain rule we immediately have
\begin{equation}\label{bd-derivative-0}
\begin{split}
&D_{\bftheta,\O}\mathcal{A}_2 = h^{-1} \left<u - g_D, u'\right>_{\Gamma_f} +  \left< g_N, p' \right>_{\Gamma_{f}}.
\end{split}
\end{equation}
Combining the identities gives \cref{shape-derivative-formula}. This completes the proof of the lemma.
\end{proof}

\subsection{Discretize-then-optimize approach}
In this subsection we aim to derive the shape derivative formula for the approach where we first discretize the Lagrangian using the CutFEM method and then we evaluate the shape derivative of the discrete functional. For this case the optimization analysis is dependent on the numerical solutions and the numerical method that is used. As a consequence the shape derivative is exact for the discrete functional.

Starting from the Lagrangian \cref{Lagrangian} we obtain the discrete Lagrangian form 
by replacing the bilinear and linear forms $a$ and $l$ by the corresponding discrete forms $a_h$ (defined in \cref{a-h}) and $l_h(v)$: %:= f(v)+ \left< g_N, v\right>_{\Gamma_{f}}$:
\begin{equation}\label{Lagrangian-1}
\begin{split}
\mathcal{L}_h (\O, w_h, v_h) = \dfrac{1}{2}h^{-1}\| g_D -w_h\|^2_{\Gamma_{f}} - a_h(w_h,v_h) + l_h(v_h).
\end{split}
\end{equation}

Note that taking the Fr\'echet derivative with respect to $v_h$ and $w_h$ in \cref{Lagrangian-1} gives exactly the CutFEM formulation for $u_h$ and $p_h$ in \cref{u-h} and \cref{p-h}, respectively.

To define the shape derivative, firstly we need to define the function space for $u_h(x,t)$ and $p_h(x,t)$ on $\O_t$. We do this by using a pullback map to $\O$ where the elements are triangular and use the standard definition of the finite element space on the reference domain.

For each $K \in \cT_h$, let $K^t = T_{t,\bftheta} K$. When there is no risk of ambiguity, we replace 
$T_{t,\bftheta}$ by $T_t$.
Here we assume that 
%$T_t|_K$ is affine for every $K\in \cT_h$,
%i.e., $K^t$ is also a triangular element, and, furthermore, that
 $T_t \in [C^1(\O)]^d$. Then, by the inverse function theorem, $T_t$ is a bijection for sufficiently small $t$ and its derivatives are point wise well defined. We also define 
 $\cT_h^t := \{K^t, K \in \cT_h\}$ and 
 \[
 	V_h^t(\O_t):=\{ v \in H^1(\cT_h^t), v|_{K^t} \in V_h^t(K^t)\}
 \]
 where $V_h^t(K^t)$ satisfies $V_h^t(K^t) =V_h(K) \circ T_t^{-1}$.
 It is then easy to verify that 
 \[ v_h^t \circ T_t \in V_h(\O) \quad \forall v_h^t \in V_h^t(\O_t).\]
 We now define $u_h(x,t)$ and $p_h(x,t)$ on $\O_t$.
 Let
 $u_h(x,t)$ and $p_h(x,t)$  be the solution of  \cref{u-h} and \cref{p-h}
in the mapped space $V_h^t(\O_t)$ using integrals over $\O_t$ and 
$\Gamma_{\O_t}$, respectively instead of $\O$ and $\Gamma_\O$.

\begin{lemma}
	Let $u_h(x,t)$ and $p_h(x,t)$ be defined as above. Then 
	\begin{equation}
		\dot u_h \in V_h(\O) \quad \mbox{and} \quad \dot p_h \in V_h(\O).
	\end{equation}
\end{lemma}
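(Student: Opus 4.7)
The plan is to exploit the definition of $V_h^t(\Omega_t)$ as a pullback space. By construction, $u_h(\cdot,t)\in V_h^t(\Omega_t)$ if and only if the pulled-back function
\[
\hat u_h(x,t) := u_h(T_t(x),t) = u_h(x(t),t)
\]
lies in $V_h(\Omega)$ for each admissible $t$. But then, directly from the definition of the material derivative,
\[
\dot u_h(x) = \lim_{t\to 0}\frac{u_h(x(t),t)-u_h(x(0),0)}{t} = \lim_{t\to 0}\frac{\hat u_h(x,t)-\hat u_h(x,0)}{t} = \partial_t \hat u_h(x,0),
\]
so the material derivative of $u_h$ coincides with the pseudo-time derivative of its pullback. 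The same identity holds for $p_h$.

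Next, I would fix a basis $\{\varphi_i\}_{i=1}^N$ of the finite-dimensional space $V_h(\Omega)$ and write $\hat u_h(x,t)=\sum_{i=1}^N \alpha_i(t)\varphi_i(x)$. Then the pointwise limit above equals $\sum_{i=1}^N \alpha_i'(0)\varphi_i(x)$, which is manifestly an element of $V_h(\Omega)$, provided each coefficient $\alpha_i$ is differentiable at $t=0$. The same representation applies to $\hat p_h$.

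Thus the argument reduces to verifying the $t$-differentiability of the coefficients $\alpha_i(t)$. To do this, I would pull the discrete variational problem \cref{u-h} back to $\Omega$ via $T_t$: changing variables transforms $\tilde a_h$, $j$ and the right-hand side into a parametrized bilinear form $\tilde a_h^t(\cdot,\cdot)$ on $V_h(\Omega)\times V_h(\Omega)$ plus a parametrized functional, whose coefficients depend smoothly on $t$ through $\det(DT_t)$, $DT_t^{-1}$ and the surface Jacobian $\omega(t)$ already appearing in \cref{lem:shape-derivative,lem:shape-derivative1,lem:shape-derivative4}. Since these ingredients are $C^1$ in $t$ for $\bftheta\in C^1$, the stiffness matrix and load vector of $\hat u_h$ are $C^1$ in $t$, and by stability of CutFEM (ensured by the ghost penalty $j$) the matrix is invertible uniformly for $t$ small; Cramer's rule then yields that $t\mapsto \alpha(t)$ is $C^1$. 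The identical reasoning applies to $\hat p_h$, giving $\dot p_h\in V_h(\Omega)$.

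The main obstacle I expect is the uniform invertibility of the pulled-back stiffness matrix in a neighborhood of $t=0$: one needs to argue that the ghost penalty stabilization continues to provide coercivity on $V_h(\Omega)$ with respect to the bilinear form $\tilde a_h^t$, even though the geometry of the cut elements changes with $t$. Here the standard CutFEM stability theory for the family $\{\Omega_t\}$ combined with the smoothness of $T_t$ is what secures the result, and this is exactly what justifies passage from the limit of the difference quotient of the coefficients to their derivatives at $t=0$.
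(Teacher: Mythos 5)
Your proposal is correct and rests on the same key observation as the paper's proof: since $V_h^t(K^t)=V_h(K)\circ T_t^{-1}$, the pullback $u_h(T_t(x),t)$ lies in the fixed finite-dimensional space $V_h(\O)$, so every difference quotient $\tfrac{1}{t}\bigl(u_h(T_t(x),t)-u_h(x,0)\bigr)$ lies in $V_h(\O)$ and hence so does its limit. The paper stops there (implicitly assuming the limit exists), whereas your third and fourth paragraphs additionally establish that existence via smooth $t$-dependence of the pulled-back stiffness matrix and uniform invertibility from the ghost-penalty stability; this is a legitimate strengthening rather than a different route.
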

\begin{proof}
By the definition, we have that
\begin{equation}
\begin{split}
	\dot u_h(x) = &\lim_{ t \to 0}\dfrac{1}{t} (u_h(x(t), t) - u_h(x,0) )\\
	=&  \lim_{ t \to 0}\dfrac{1}{t} (u_h(T_t (x), t) - u_h(x,0) ).
\end{split}
\end{equation}
Since both $u_h(T_t (x), t) \in V_h$ and $u_h(x,0) \in V_h$, we have that
$\dot u_h \in V_h$. The result for $\dot p_h$ holds by the same argument.
\end{proof}

%Before we proceed to derive the shape derivative of $\mathcal{L}_h$, we define 
%$u_h(x,t)$ and $p_h(x,t)$  to be the solution of  \cref{u-h} and \cref{p-h}
%with integrand adjusted to $\O_t$ and $\Gamma_{\O}(\O_t)$.
% It is then easy to verify that
%$\dot u_h \in V_h$ and $\dot p_h \in V_h$ thanks to the fact that the background mesh remains the same.

In the following lemma we derive the integral representation for the shape derivative of 
$\mathcal{L}_h$.

\begin{lemma}\label{lemma:sd-h1}
Let $ \mathcal{L}_h$ be defined in \cref{Lagrangian-1}. Then its shape derivative has the following representation:
\begin{equation}\label{shape-derivative-formula-h1}
\begin{split}
&D_{\O, \bftheta} \mathcal{L}_h(\O, u_h(\O), p_h(\O)) \\
&\quad= \int_{\O}(\divvr \bftheta) \left(f p_h - \nabla u_h \cdot \nabla p_h  \right) \,dx
\\
&\quad \quad 
%+\int_\O f (\nabla p_h \cdot \bftheta) \,dx
+   \int_\O (\nabla u_h) S(\bftheta)(\nabla p_h)^t \,dx
+ \int_{\O} (\nabla f \cdot \bftheta) p_h \,dx
\\
&\quad \quad +\int_{\Gamma_{\O}} ( \nabla \cdot \bftheta ) (D_n u_h)  p_h
	   - (S(\bftheta)  \cdot \nabla u_h) \cdot \bfn p_h
	   	    \,ds\\
&\quad \quad  +\int_{\Gamma_{\O}} ( \nabla \cdot \bftheta ) (D_n p_h)  u_h
	   - (S(\bftheta) \cdot \nabla p_h) \cdot \bfn u_h
	   	    \,ds\\
&\quad \quad -
 \int_{\Gamma_{\O}} \beta h^{-1} ( \nabla_{\Gamma}\cdot \bftheta) u_hp_h 
 %+ \beta h^{-1} ( \nabla u_h \cdot \bftheta) p_h\,ds 
 + \textcolor{black}{ \sum_{F \in \cE_I} \gamma h \epsilon_F(u_h, p_h)}\,
\end{split}
\end{equation}
\end{lemma}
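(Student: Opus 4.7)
The plan is to mirror the structure of the proof of Lemma 4.5 (the continuous Lagrangian), but now track every extra contribution created by Nitsche's weak boundary enforcement and the ghost penalty, and then use the discrete Euler--Lagrange equations to eliminate residual terms. The structural observation that drives the cancellation is that $\dot u_h, \dot p_h \in V_h(\O)$, so both are admissible test functions in \eqref{u-h} and \eqref{p-h}.

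First I would decompose the Lagrangian \eqref{Lagrangian-1} as $\mathcal{L}_h = \mathcal{B}_1 + \mathcal{B}_2 + \mathcal{B}_3 + \mathcal{B}_4$ with
\begin{align*}
\mathcal{B}_1 &= -(\nabla u_h, \nabla p_h)_\O + (f, p_h)_\O,\\
\mathcal{B}_2 &= \langle D_n u_h, p_h\rangle_{\Gamma_\O} + \langle D_n p_h, u_h\rangle_{\Gamma_\O} - \beta h^{-1}\langle u_h, p_h\rangle_{\Gamma_\O},\\
\mathcal{B}_3 &= -j(u_h, p_h),\\
\mathcal{B}_4 &= \tfrac{1}{2} h^{-1}\|g_D - u_h\|_{\Gamma_f}^2 + \langle g_N, p_h\rangle_{\Gamma_f},
\end{align*}
and then apply the previously established shape derivative identities piecewise. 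Concretely: Lemma 4.2 gives the shape derivative of the bulk gradient in $\mathcal{B}_1$; Lemma 4.1 (first identity) handles $(f,p_h)_\O$, producing the term $(\nabla f\cdot\bftheta)\,p_h$ via $\dot f = \nabla f\cdot\bftheta$ together with a divergence term; Lemma 4.3 applied to each consistency term in $\mathcal{B}_2$ yields the two $\Gamma_\O$ integrals that appear in \eqref{shape-derivative-formula-h1}; Lemma 4.1 (second identity) applied to the Nitsche penalty produces the $\beta h^{-1}(\nabla_\Gamma\cdot\bftheta)u_h p_h$ contribution; Lemma 4.4 applied face-by-face to the ghost penalty delivers the $\epsilon_F(u_h,p_h)$ terms. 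Finally, since $\bftheta\equiv 0$ on $\Gamma_f$, the surface measure there is frozen, so $\mathcal{B}_4$ only contributes terms in which material and Eulerian derivatives coincide on $\Gamma_f$.

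Next I would collect every residual term containing $\dot u_h$ or $\dot p_h$. Grouping by $\dot p_h$, all contributions reassemble into
\[
-a_h(u_h, \dot p_h) + (f, \dot p_h)_\O + \langle g_N, \dot p_h\rangle_{\Gamma_f},
\]
which vanishes by \eqref{u-h} because $\dot p_h \in V_h(\O)$. Symmetrically, the $\dot u_h$ residuals sum to
\[
-a_h(\dot u_h, p_h) + h^{-1}\langle u_h - g_D, \dot u_h\rangle_{\Gamma_f},
\]
which vanishes by \eqref{p-h} since $\dot u_h \in V_h(\O)$. What remains is precisely the right-hand side of \eqref{shape-derivative-formula-h1}.

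The principal obstacle is bookkeeping rather than analysis. Unlike the continuous case of Lemma 4.5, where $\dot u = \dot p = 0$ on $\Gamma_\O$ was used to discard boundary residuals, here $\dot u_h$ and $\dot p_h$ need not vanish on $\Gamma_\O$; the cancellation is instead produced entirely by the algebraic structure of the discrete Euler--Lagrange system. One must therefore verify with care that the $\dot u_h$-- and $\dot p_h$--terms coming from Lemmas 4.2, 4.3, Lemma 4.1, and Lemma 4.4 reconstitute $a_h(\dot u_h, p_h)$ and $a_h(u_h, \dot p_h)$ exactly, including the correct signs from $-a_h$ in \eqref{Lagrangian-1} and from the outward normal convention used in Lemma 4.3. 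The ghost penalty step is the most delicate: one must separate the "principal part" of Lemma 4.4 (jumps paired with dotted functions, which enter the cancellation) from the $\epsilon_F(u_h,p_h)$ remainder (which survives in the final formula), and ensure consistency of the summation over $\cE_I$.
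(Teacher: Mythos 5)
Your proposal is correct and follows essentially the same route as the paper: the identical four-term decomposition of $\mathcal{L}_h$, piecewise application of Lemmas 4.1--4.4, and cancellation of the $\dot u_h$-- and $\dot p_h$--residuals by testing \cref{u-h} with $\dot p_h$ and \cref{p-h} with $\dot u_h$, which is admissible precisely because $\dot u_h,\dot p_h \in V_h(\O)$. The regrouped residuals you identify are exactly the paper's equation \cref{domain-derivative-0-c}, so nothing further is needed.
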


\begin{proof}
Rearrange $ \mathcal{L}(\O, u, p)$ such that 
\begin{equation}\label{domain-derivative-00-a}
	 \mathcal{L}(\O, u, p) \triangleq \sum_{i=1}^4\mathcal{A}_i
\end{equation}
where
\begin{align*}
\mathcal{A}_1 &=   -(\nabla u_h, \nabla p_h)_\O+ \left( f, p_h\right)_{\O},
\\
\mathcal{A}_2 &=
	\dfrac{1}{2} h^{-1}  \left<g_D- u_h, g_D- u_h \right>_{\Gamma_{f}} + \left<g_N, p_h\right>_{\Gamma_{f}},
	\\
	\mathcal{A}_3 &= \left< D_n u_h, p_h\right>_{\Gamma_{\O}}+
	\left< D_n p_h, u_h\right>_{\Gamma_{\O}}
	-
	\beta h^{-1}\left< u_h, p_h\right>_{\Gamma_{\O}},
	\\
	\mathcal{A}_4 &=-j(u_h, p_h).
\end{align*}
For the first two terms, we could derive its shape derivative similarly as in \cref{lemma:sd}:
\begin{equation}\label{domain-derivative-00-b}
\begin{split}
D_{\bftheta,\O} \mathcal{A}_1
=&- \int_\O(\divvr \bftheta)(\nabla u_h \cdot \nabla p_h - fp_h ) \,ds
+   \int_\O (\nabla u_h) S(\bftheta)(\nabla p_h)^t \,dx
+ \int_\O(\nabla f \cdot \bftheta) p_h \,dx  \\
&
- \int_\O ( \nabla \dot u_h \cdot \nabla p_h) \,dx
- \int_\O(\nabla u_h \cdot \nabla \dot p_h) \,dx  +  \int_\O(f \dot p_h) \,dx.
%=&-  \int_\O(\divvr \bftheta)( \nabla u_h \cdot \nabla p_h - fp_h) \,ds
%+   \int_\O (\nabla u_h) S(\bftheta)(\nabla p_h)^t \,dx
%+ \int_\O(\nabla f \cdot \bftheta )p_h \,dx  \\
%&-  \int_\O (\nabla u_h)  (\nabla \bftheta)^t \cdot (\nabla p_h)^t \,dx
%-  \int_\O(\nabla u_h) (\nabla \bftheta)(\nabla p_h)^t \,dx  +  \int_\O f (\nabla p_h \cdot \bftheta) \,dx\\
%&
%-  \int_\O( \nabla u_h' \cdot \nabla p_h) \,dx
%-  \int_\O(\nabla u_h \cdot \nabla p_h') \,dx  +  \int_\O( f p_h') \,dx\\
%=&-  \int_\O(\divvr \bftheta)( \nabla u_h \cdot \nabla p_h - fp_h) \,ds
%+ \int_\O(\nabla f \cdot \bftheta )p_h \,dx  
%  +\int_\O f (\nabla p_h \cdot \bftheta) \,dx\\
%&
%-  \int_\O( \nabla u_h' \cdot \nabla p_h) \,dx
%-  \int_\O(\nabla u_h \cdot \nabla p_h') \,dx  +  \int_\O( f p_h') \,dx\\
\end{split}
\end{equation}
%where we have used the equations that
%\[
%	\dot v_h = v_h' + \nabla v_h \cdot \bftheta \quad \mbox{and}\quad
%	\nabla \dot v_h = \nabla v_h' + \nabla v_h \cdot (\nabla \bftheta)^t \quad \forall v_h \in V_h. 
%\]
Similarly, we have
\begin{equation}\label{bd-derivative-0-a}
\begin{split}
&D_{\bftheta,\O}\mathcal{A}_2 =- h^{-1} \left<g_D - u_h, \dot u_h\right>_{\Gamma_{f}} +  \left< g_N, \dot p_h \right>_{\Gamma_{f}}.
\end{split}
\end{equation}
For $\mathcal{A}_3$, by \cref{lem:shape-derivative4}  we have
\begin{equation}\label{bd-derivative-0-b}
\begin{split}
D_{\bftheta,\O}\mathcal{A}_3  =&\int_{\Gamma_{\O}} ( \nabla \cdot \bftheta ) (D_n u_h)  p_h
	   - (S(\bftheta) \cdot \nabla u_h) \cdot \bfn p_h
	   	   + (D_n \dot u_h ) p_h 
	   + (D_n u_h) \dot p_h \,ds\\
&+\int_{\Gamma_{\O}} (( \nabla \cdot \bftheta ) (D_n p_h)  u_h
	   - (S(\bftheta) \cdot \nabla p_h) \cdot \bfn u_h
	   	   + (D_n \dot p_h ) u_h 
	   + (D_n p_h ) \dot u_h \,ds\\
&-
 \beta h^{-1}  \int_{\Gamma_{\O}} ( \nabla_{\Gamma}\cdot \bftheta) u_hp_h
	+  \dot u_h p_h 	
	+ u_h \dot p_h \,ds.
%=&\int_{\Gamma_{\O}(\O)} (( \nabla \cdot \bftheta ) (\nabla u_h \cdot \bfn)  p_h
%	   - (S(\bftheta) \cdot \nabla u_h) \cdot \bfn p_h
%	   +(\nabla \bftheta \cdot \nabla u_h  \cdot \bfn) p_h
%	   	   + (\nabla  u_h' \cdot \bfn) p_h   \,ds\\
%+&\int_{\Gamma_{\O}(\O)} (( \nabla \cdot \bftheta ) (\nabla p_h \cdot \bfn)  u_h
%	   - (S(\bftheta) \cdot \nabla p_h) \cdot \bfn u_h
%	   +  (\nabla \bftheta \cdot \nabla p_h \cdot \bfn) u_h
%	  + (\nabla p_h' \cdot \bfn) u_h  \,ds\\
%-&
% \int_{\Gamma_{\O}} \gamma h^{-1} ( \nabla_{\Gamma}\cdot \bftheta) u_hp_h
%	+\gamma h^{-1} ( \nabla u_h \cdot \bftheta) p_h  +\gamma h^{-1}  u_h' p_h \,ds\\
%=&\int_{\Gamma_{\O}(\O)} (( \nabla \cdot \bftheta ) (\nabla u_h \cdot \bfn)  p_h
%	   - ((\nabla \bftheta)^t \cdot \nabla u_h) \cdot \bfn p_h
%	   	   + (\nabla  u_h' \cdot \bfn) p_h   \,ds\\
%+&\int_{\Gamma_{\O}(\O)} (( \nabla \cdot \bftheta ) (\nabla p_h \cdot \bfn)  u_h
%	   - ((\nabla \bftheta)^t \cdot \nabla p_h) \cdot \bfn u_h
%	  + (\nabla p_h' \cdot \bfn) u_h  \,ds\\
%-&
% \int_{\Gamma_{\O}} \gamma h^{-1} ( \nabla_{\Gamma}\cdot \bftheta) u_hp_h
%	+\gamma h^{-1} ( \nabla u_h \cdot \bftheta) p_h  +\gamma h^{-1}  u_h' p_h \,ds.
\end{split}
\end{equation}

And for $\mathcal{A}_4$ by \cref{lemma4.4} we have
\begin{equation}
\begin{split}
&D_{\bftheta,\O}\mathcal{A}_4 = - j(u_h, \dot p_h) - j(\dot u_h, p_h) - \sum_{F\in\cE_I} \gamma \epsilon_F(u_h, p_h).
\end{split}
\end{equation}

Thanks to the fact that $\dot u_h \in V_h$ and $\dot p_h \in V_h$, by  \cref{u-h} and
\cref{p-h} with $v$ replaced by $\dot p_h$ in \cref{u-h} and by $\dot u_h$ in \cref{p-h} we have
\begin{equation}\label{domain-derivative-0-c}
\begin{split}
0&=- ( \nabla \dot u_h, \nabla p_h)_\O
- (\nabla u_h,\nabla \dot p_h)_\O  + (f, \dot p_h)_\O\\
&- h^{-1} \left<g_D - u_h, \dot u_h\right>_{\Gamma_{f}} +  \left< g_N, \dot p_h \right>_{\Gamma_{f}}\\
&	   	   +\left< D_n \dot u_h , p_h \right>_{\Gamma_{\O}}
	   +  \left<  D_n u_h, \dot p_h \right>_{\Gamma_{\O}}
	   +    \left<  D_n \dot p_h, u_h\right>_{\Gamma_{\O}}
	   +  \left<  D_n p_h , \dot u_h\right>_{\Gamma_{\O}} \\
&-
	\beta h^{-1} 	\left<   \dot u_h, p_h \right>_{\Gamma_{\O}}
	-
	   \beta h^{-1} 	\left<    u_h, \dot p_h \right>_{\Gamma_{\O}} \\
 &- j(u_h,  \dot p_h) - j(\dot u_h, p_h).
\end{split}
\end{equation}
Combing \cref{domain-derivative-00-a}--\cref{domain-derivative-0-c} gives \cref{shape-derivative-formula-h1}.

\end{proof}
%\textcolor{red}{Notations to unify: $ \nabla v\cdot \bn_F$ to $D_n v$}

\begin{remark}
	The shape derivative is exact, however, due to the extra terms of the CutFEM formulation it becomes more complicated. We also observe that the field $\bftheta$ still has to be approximated in the finite element space (see section \ref{sec:beta} below).
\end{remark}

%\textcolor{red}{Note: Since I can't prove (4.31) at this moment.  Can we instead remove ghost penalty term in the numerical scheme and, instead, we add a remark afterwards to address this term?}

\subsection{CutFEM using boundary value correction}
%Assume that there exists a vector field $\bftheta: \hat \Omega \rightarrow \mathbb{R}^d$ with $\|\bftheta\|_{H^1(\hat \O)} = 1$, $\bftheta =0$ on $\Gamma_{f}$ and some $t>0$ such that 
%\begin{equation}\label{ideal-mapping}
%T_t: x \rightarrow x + t \bftheta(x) \; \mbox{ and } \; T_t(\Gamma_{\O}(\O)) = \Gamma_{\O}(\O^*).
%\end{equation}
In the classical shape derivative the function $u(x,t)$ and $p(x,t)$ are defined on the domain of $\O_t$. In this subsection, we instead define $u(x,t)$ on $\O$ (instead of $\O_t$) for $t$ small enough and include the effect of perturbations of the domain on the boundary through the weakly imposed boundary conditions, i.e., the boundary correction approach. The idea of perturbing boundary conditions to improve geometry approximation was first introduced in \cite{BDT72}. The extension to CutFEM was considered in \cite{BHL18}. For a recent discussion of the method interpreted as a singular Robin condition we refer to \cite{DGS20}. Similar ideas have already been exploited in the context of the standard Bernoulli problem, see \cite{BCT08}. 
Drawing on the ideas on boundary correction for the CutFEM method \cite{BHL18} we modify the weak formulation on the free boundary as follows:
\begin{equation}
\tilde{a}_h^t(w,v) = (\nabla w, \nabla v)_{\O} - \left<D_n w, v\right>_{\Gamma_{\O}}  - \left<D_n v, w \circ T_t\right>_{\Gamma_{\O}} + 
\beta h^{-1} (w \circ T_t,v \circ T_t)_{\Gamma_{\O}} ,
\end{equation}
and
\[
a_h^t(v,w) := \tilde{a}_h^t(v,w) + j(v,w).
\]
We note that the above weak formulation is consistent with the following:
\[
-\triangle u = f \in \O, \quad D_n u =g_N \mbox{ on } \Gamma_{f}, \quad \mbox{and} \quad u = 0 \mbox{ on } \Gamma_{\O_t}.
\]
%However, its computational domain $\O$ is different from the true domain $\O + t \bftheta$ for some $t>0$. 
Also note that the Dirichlet boundary condition that is originally weakly imposed on $\Gamma_{\O}$ is now weakly imposed on $\Gamma_{\O_t}$ through function composition.  

Now, considering the following variational problems:
finding $u_{h}(x,t) \in V_{h}(\O)$
 such that
\begin{equation}\label{u-t-h}
a_h^t(u_h(x,t),v) = (f,v)_\O+\left<g_N, v\right>_{\Gamma_{f}} 
\quad \forall v \, \in V_h(\O),
\end{equation}
and finding $p_{h}(x,t) \in V_h(\O)$ such that
\begin{equation}\label{p-t-h}
a_h^t(v, p_{h}(x,t)) =  h^{-1}\left<u_{h}(t)  -g_D , v\right>_{\Gamma_{f}} \quad \forall \,v \, \in V_h(\O).
\end{equation}
We  define the corresponding Lagrangian at pseudo-time $t$ with respect to $\bftheta$,
\begin{equation}
\begin{split}
\mathcal{L}_h^t(\O, u_{h}(t), p_{h}(t)) =& \dfrac{1}{2}h^{-1}\| g_D -u_h(x,t)\|^2_{\Gamma_{f}} -  a_h^t(u_h(x,t), p_h(x,t)) \\
&+(f,p_h(x,t))_\O+\left<g_N, p_h(x,t)\right>_{\Gamma_{f}}.
\end{split}
\end{equation}
\begin{remark}
It is easy to see that 
\[
\lim_{ t \to 0} \mathcal{L}_h^t(\O, u_h(t), p_h(t)) = \mathcal{L}_h(\O, u_h, p_h).
\]
\end{remark}
Finally, for a given $\bftheta$, we define the modified shape derivative  by
\begin{equation}\label{shape-derivative-2}
\tilde D_{\O, \bftheta} \mathcal{L}_h = \lim_{t \to 0} \dfrac{1}{t} 
\left( \mathcal{L}_h^t(\O, u_h(t), p_h(t)) -\mathcal{L}_h(\O, u_h(0), p_h(0)) \right),
\end{equation}
where $u_h(0) = u_h(\O), p_h = p_h(\O)$ are the solutions on $\O$ for \cref{u-h} and \cref{p-h}, respectively.

\begin{remark}
We note that contrary to the classical shape derivative here $u_h(x,t)$ and $p_h(x,t)$ are still defined on the fixed $\O$ and not on the perturbed domain $\O_t = \O + t \bftheta$. 
\end{remark}

%\begin{remark}
%	The above definition we believe in some way is a good approximation to the classical shape derivative. 
%\end{remark}

%\begin{remark}
%\textcolor{red}{It is easy to see that $
%	\mathcal{L}_h^0 = \mathcal{L}_h$. We also have to prove that 
%	$\mathcal{L}_h^t$ is differentiable with respect to $t$.
%	}
%\end{remark}

\subsection{Shape derivative formula based on the boundary value correction}
In this subsection we derive the explicit formula of \cref{shape-derivative-2} in terms of $u_h(\O)$ and $p_h(\O)$.

Recall the pseudo-time derivative for $u_h$ and $p_h$:
\begin{equation}\label{time_derivative}
	u_h'(x) = \lim_{t \to 0} \dfrac{1}{t}(u_h(x,t) - u_h(x,0)),\;
	q_h'(x) = \lim_{t \to 0} \dfrac{1}{t}(q_h(x,t) - q_h(x,0)) \; \forall \,
	x \in \O.
\end{equation}
%\textcolor{red}{Under what situation this is well defined?}

\begin{lemma}\label{lemma:sd-bv}
Let $u_h$ and $p_h$ be the solutions of \cref{u-h} and \cref{p-h}, respectively.
We have the following expression for the modified shape derivative defined in \cref{shape-derivative-2}:
\begin{equation} \label{shape-derivative-a}
\begin{split}
\tilde D_{\O, \bftheta} \mathcal{L}_h=
& \left<D_n p_h,  \nabla u_h \cdot \bftheta \right>_{\Gamma_{\O}} 
 -  \beta h^{-1} \left(  
	\left<  \nabla u_h \cdot \bftheta ,  p_h \right>_{\Gamma_{\O}} 
	+
	\left<  u_h ,  \nabla p_h \cdot \bftheta \right>_{\Gamma_{\O}}
	\right).
\end{split}
\end{equation}
\end{lemma}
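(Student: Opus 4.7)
The plan is to compute $\tilde D_{\O,\bftheta}\mathcal{L}_h$ by direct pseudo-time differentiation of $\mathcal{L}_h^t(\O, u_h(t), p_h(t))$ at $t=0$, and then to use the discrete Euler--Lagrange equations \cref{u-h} and \cref{p-h} to cancel all contributions carrying the pseudo-time derivatives $u_h'$ and $p_h'$ introduced in \cref{time_derivative}. The structural feature that makes this approach particularly clean is that here, in contrast to the two earlier shape derivatives, the domain $\O$ and the background mesh are held fixed, so $\bftheta$ enters only through the composition arguments $w\circ T_t$ on $\Gamma_{\O}$ and no $\divvr\,\bftheta$ or $S(\bftheta)$-type bulk contributions appear. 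Since $u_h(\cdot,t), p_h(\cdot,t) \in V_h(\O)$ for every admissible $t$, we also have $u_h', p_h' \in V_h(\O)$, so both are legitimate test functions in \cref{u-h} and \cref{p-h}.

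First, I would differentiate $\mathcal{L}_h^t$ termwise by the product rule. The boundary misfit $\tfrac12 h^{-1}\|g_D - u_h(t)\|^2_{\Gamma_{f}}$ and the load terms $(f,p_h(t))_\O$, $\left<g_N, p_h(t)\right>_{\Gamma_{f}}$ contribute only primed quantities (recall $\bftheta \equiv 0$ on $\Gamma_{f}$). Within $a_h^t(u_h(t),p_h(t))$, the bulk form, the Nitsche consistency term $-\left<D_n u_h(t), p_h(t)\right>_{\Gamma_{\O}}$, and the ghost-penalty $j(u_h(t),p_h(t))$ are unchanged by $T_t$ and likewise produce only primed terms. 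The only two ingredients carrying a $T_t$-composition, namely $-\left<D_n p_h(t), u_h(t)\circ T_t\right>_{\Gamma_{\O}}$ and $\beta h^{-1}\left<u_h(t)\circ T_t, p_h(t)\circ T_t\right>_{\Gamma_{\O}}$, additionally yield geometric pieces via $\partial_t(w\circ T_t)|_{t=0} = w' + \nabla w\cdot \bftheta$. After accounting for the minus sign in front of $a_h^t$ in $\mathcal{L}_h^t$, these geometric pieces assemble into exactly the right-hand side of \cref{shape-derivative-a}.

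Next I would show that all remaining (primed) contributions cancel. Using the symmetry of $a_h$ evident from \cref{a-h}, the primed part of $\partial_t|_{t=0} a_h^t(u_h(t),p_h(t))$ equals $a_h(u_h, p_h') + a_h(u_h', p_h)$. Testing \cref{u-h} with $v = p_h'$ yields $a_h(u_h, p_h') = (f, p_h')_\O + \left<g_N, p_h'\right>_{\Gamma_{f}}$, which cancels the load contributions coming from differentiating $\mathcal{L}_h^t$; symmetrically, testing \cref{p-h} with $v = u_h'$ yields $a_h(p_h, u_h') = h^{-1}\left<u_h - g_D, u_h'\right>_{\Gamma_{f}}$, which cancels the derivative of the boundary-misfit term. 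All primed terms thus disappear and \cref{shape-derivative-a} follows.

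The main obstacle is purely algebraic bookkeeping: one must carefully track which of the four ingredients of $\tilde a_h^t$ carry a $T_t$ composition and which do not. In particular, the asymmetric Nitsche consistency term $-\left<D_n w, v\right>_{\Gamma_{\O}}$ is \emph{not} composed with $T_t$, which is precisely why only a single boundary term $\left<D_n p_h, \nabla u_h\cdot\bftheta\right>_{\Gamma_{\O}}$ (rather than two symmetric copies) survives in the final formula. A second minor point to verify is that the composition of a $V_h$-function with $T_t$ is smooth enough in $t$ for the chain rule above to be applied pointwise on $\Gamma_{\O}$; this is immediate because $T_t$ is $C^1$ in $x$ and affine in $t$.
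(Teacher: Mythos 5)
Your proposal is correct and follows essentially the same route as the paper: termwise pseudo-time differentiation of $\mathcal{L}_h^t$ at $t=0$, extraction of the geometric pieces from the two $T_t$-composed boundary terms via $\partial_t(w\circ T_t)|_{t=0}=w'+\nabla w\cdot\bftheta$, and cancellation of all primed contributions by testing \cref{u-h} with $p_h'$ and \cref{p-h} with $u_h'$ (both admissible since $u_h',p_h'\in V_h(\O)$). Your observation that the primed part of the derivative of $a_h^t$ collapses to $a_h(u_h,p_h')+a_h(u_h',p_h)$ by symmetry is a slightly tidier bookkeeping of what the paper writes out explicitly, but it is not a different argument.
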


\begin{proof}
By definition we have
\begin{equation}\label{a}
\begin{split}
	\tilde D_{\O, \bftheta} \mathcal{L}_h =& \lim_{t \to 0} \dfrac{1}{t} 
{\left(\mathcal{L}_h^t(\O, u_h(t), p_h(t) -\mathcal{L}_h(\O, u_h(0), p_h(0)) \right)}\\
=&   \lim_{t \to 0}  \dfrac{1}{2t} h^{-1} \left< u_h(t) - g_D)^2 -  (u_h(0) - g_D)^2 \right>_{\Gamma_{f}}\\
&- \lim_{t \to 0}  \dfrac{1}{t} \left( a_h^t( u_h(t), p_h(t)) -   a_h( u_h(0), p_h(0)) \right)\\
& +  \lim_{t \to 0}  \dfrac{1}{t} (f, p_h(t) - p_h(0))_\O +   
\lim_{t \to 0}  \dfrac{1}{t} \left<g_N, p_h(t) - p_h(0) \right>_{\Gamma_{f}}\\
&-\lim_{t \to 0}  \dfrac{1}{t} (j(u_h(t), p_h(t)) - j(u_h(0), p_h(0)))
\\
\triangleq& \sum_{i=1}^5\mathcal{A}_i.
\end{split}
\end{equation}

By a direct calculation and \cref{time_derivative} we have
\begin{alignat}{2}
\mathcal{A}_1 &= h^{-1}\left< u_h -g_D, u_h' \right>_{\Gamma_{f}},  
&\quad \mathcal{A}_3 &= (f, p'_h)_{\O}, \\
\mathcal{A}_4 &= \left< g_N, p_h' \right>_{\Gamma_{f}}, \quad
&  \mathcal{A}_5 &= -(j(u_h', p_h) - j(u_h, p_h')).
\end{alignat}
Expanding and regrouping terms in $a_h^t(\cdot)$ and $a_h(\cdot)$  gives 
\begin{equation}
\begin{split}
-\mathcal{A}_2 =& \lim_{t \to 0}  \dfrac{1}{t} \left( a_h^t( u_h(t), p_h(t)) -   a_h( u_h, p_h) \right)\\
=&
\lim_{t \to 0}	 \dfrac{1}{t} \left( (\nabla u_h(t), \nabla p_h(t))_{\O} -  (\nabla u_h(0), \nabla p_h(0))_{\O}  \right)\\
&
-\lim_{t \to 0}	 \dfrac{1}{t} \left( \left<D_n u_h(t), p_h(t)\right>_{\Gamma_{\O}} 
-  \left<D_n u_h(0), p_h(0)\right>_{\Gamma_{\O}} \right)\\
&-\lim_{t \to 0}	 \dfrac{1}{t} \left( \left<D_n p_h(t), u_h(t) \circ T_t\right>_{\Gamma_{\O}}  
		-\left<D_n p_h(0), u_h(0)\right>_{\Gamma_{\O}}  \right)\\
& + \lim_{t \to 0} \dfrac{1}{t} \beta h^{-1} \left(  \left<u_h(t) \circ T_t, p_h(t) \circ T_t\right>_{\Gamma_{\O}} - 
		 \left< u_h(0), p_h(0)\right>_{\Gamma_{\O}} \right).
\end{split}
\end{equation}
Applying the product rule, Taylor expansion and neglecting the higher order terms gives
\begin{equation}
\begin{split}
-\mathcal{A}_2 =& \lim_{t \to 0}  \dfrac{1}{t} \left( a_h^t( u_h(t), p_h(t)) -   a_h( u_h, p_h) \right)\\
=&
\left( (\nabla u_h', p_h)_{\O} +  (\nabla u_h, \nabla p_h')_{\O}  \right)
-\left( \left<D_n u'_h, p_h \right>_{\Gamma_{\O}} -  \left<D_n u_h, p_h'\right>_{\Gamma_{\O}} \right)\\
&-\lim_{t \to 0}	 \dfrac{1}{t} \left( \left<D_n p_h(t), u_h(t) + t \nabla u_h(t) \cdot \bftheta \right>_{\Gamma_{\O}}  
		-\left<D_n p_h, u_h\right>_{\Gamma_{\O}}  \right)\\
& + \lim_{t \to 0} \dfrac{1}{t} \beta h^{-1} \left(  
	\left<  u_h(t) + t \nabla u_h(t) \cdot \bftheta ,  p_h(t) + t \nabla p_h(t) \cdot \bftheta \right>_{\Gamma_{\O}} - 
	\left< u_h , p_h\right>_{\Gamma_{\O}} \right)\\
=&
\left( (\nabla u_h', \nabla p_h)_{\O} +  (\nabla u_h, \nabla p_h')_{\O}  \right)
-\left( \left<D_n u'_h, p_h \right>_{\Gamma_{\O}} -  \left<D_n u_h, p_h'\right>_{\Gamma_{\O}} \right)\\
&-	 \left( \left<D_n p_h', u_h \right>_{\Gamma_{\O}}  
		+ \left<D_n p_h, u_h'\right>_{\Gamma_{\O}}  
		+\left<D_n p_h,  \nabla u_h \cdot \bftheta \right>_{\Gamma_{\O}} 
		\right)\\
& +  \beta h^{-1} \left(  
	\left<  u_h' ,  p_h \right>_{\Gamma_{\O}} +\left< u_h , p'_h\right>_{\Gamma_{\O}} 
	+
	\left<  \nabla u_h \cdot \bftheta ,  p_h \right>_{\Gamma_{\O}} 
	+
	\left<  u_h ,  \nabla p_h \cdot \bftheta \right>_{\Gamma_{\O}} 
	\right).
\end{split}
\end{equation}
Note that $u_h' , p_h' \in V_h$. 
By \cref{u-h} and \cref{p-h} we have
\begin{equation}
\begin{split}
&a_h(p_h, u'_h) \\
=& (\nabla p_h, \nabla u_h')_{\O} - \left<D_n p_h, u_h' \right>_{\Gamma_{\O}}  - \left<D_n u_h', p_h\right>_{\Gamma_{\O}} + 
\beta h^{-1} \left<p_h,u_h'\right>_{\Gamma_{\O}} + j(p_h, u_h') \\
=& h^{-1} \left< u_h - g_D, u_h'\right>_{\Gamma_{f}}
\end{split}
\end{equation}
and
\begin{equation}\label{b}
\begin{split}
&a_h(u_h, p'_h) \\
=& (\nabla u_h, \nabla p_h')_{\O} - \left<D_n u_h, p_h' \right>_{\Gamma_{\O}}  - (D_n p_h', u_h)_{\Gamma_{\O}} + 
\beta h^{-1} (u_h,p_h')_{\Gamma_{\O}} + j(u_h, p_h') \\
=&(f, u_h')_\O +  \left< g_N, p_h'\right>_{\Gamma_{f}}
\end{split}
\end{equation}
Combining \cref{a}--\cref{b} gives \cref{shape-derivative-a}. This completes the proof of the lemma.
\end{proof}

\begin{rem}
Applying the Taylor expansion and omitting the higher order terms gives
\begin{equation} \label{shape-derivative-b}
\begin{split}
a_h^t(w,v) &\approx (\nabla w, \nabla v)_{\O} - \left<D_n w, v\right>_{\Gamma_{\O}}  - (D_n v, w)_{\Gamma_{\O}} + 
\beta h^{-1} (w, v)_{\Gamma_{\O}} \\
&- t \left( (D_n v, \nabla w \cdot \bftheta)_{\Gamma_{\O}}
+
\beta h^{-1} (\nabla w \cdot \bftheta, v)_{\Gamma_{\O}}
+
\beta h^{-1} (\nabla v \cdot \bftheta, w)_{\Gamma_{\O}} \right).
\end{split}
\end{equation}
Taking the derivative with respect to $t$ also gives  \cref{shape-derivative-a}.
\end{rem}

\begin{rem}
We note that here the modified shape derivative $\tilde D_{\O, \bftheta}$ is exact.
However, comparing to \cref{shape-derivative-formula-h1}  the formula in \cref{shape-derivative-a}  is much more simple. Furthermore, since the shape derivative has the surface form and it is exact, it would be an interesting alternative when an explicit parametric approach for the surface representation is used.
\end{rem}

%\tilde{a}_h(w,v) = (\nabla w, \nabla v)_{\O} - (D_n w, v)_{\Gamma_{\O}}  - (D_n v, w)_{\Gamma_{\O}} + 
%\beta h^{-1} (w,v)_{\Gamma_{\O}}

%\begin{equation}
%\begin{split}
%\end{split}
%\end{equation}

\section{Optimization algorithms}\label{sec:Optimization-Algorithms}
The objective is now to find the vector field $\bftheta: \hat \O \rightarrow \hat \O$  such that $J(\O)$ decreases the fastest along that direction. We seek through solving the following constrained minimization problem:
starting from the domain $\O$ with free boundary $\Gamma_{\O}$ we wish to find the steepest descent vector field
$\bfbeta \in W(\hat\O, \mathbb{R}^d)$ such that 
\begin{equation}\label{minimization2}
\bfbeta = \underset{\substack{\|\bftheta\|_{H^1(\hat \O)} =1, \\ \bftheta = 0 \mbox{ on } \Gamma_{f}.}}{\mbox{argmin}}
D_{\O, \bftheta} \mathcal{L}_h.
%\mbox{or} \quad
%\bfbeta = \underset{\substack{\|\bftheta\|_{H^1(\hat \O)} =1, \\ \bftheta = 0 \mbox{ on } \Gamma_{f}}}{\mbox{argmin}}
%\tilde D_{\O, \bftheta} \mathcal{L}_h.
\end{equation}
Define the corresponding Lagrangian 
\[
	\mathcal{K}(\bftheta, \lambda) = D_{\O, \bftheta} \mathcal{L}_h + 
	\lambda(\| \bftheta\|_{H^1(\hat \O)}^2-1),
\]
and taking the derivative with respect to $\lambda$ gives the constrain condition. 
From remark 4.1 in \cite{BEHLL17}, an equivalent formulation of \cref{minimization2} renders
to find $\tilde\bfbeta \in H_0^1(\hat \O)^d$ such that
\begin{equation}\label{velocity-continuous-formula}
	(\tilde\bfbeta, \bftheta)_{H^1(\hat \O)} = -D_{\O, \bftheta}\mathcal{L}_h   
	\quad \forall \, \bftheta \in H_{0}^1( \hat \O)^d.
\end{equation}
where $\tilde\bfbeta= 2\lambda \bfbeta$ and $\lambda = \dfrac{\| \tilde\bfbeta\|_{H^1(\hat \O)}}{2}$. Then it is easy to see
 that by taking $\bftheta = \bfbeta$ 
\[
	 D_{\O, \bfbeta}\mathcal{L}_h = - (\tilde\bfbeta, \bfbeta)_{H^1(\hat \O)} 
	=-\|\tilde\bfbeta\|_{H_{0}^1(\hat \O)} <0
\]
which guarantees that $\bfbeta$ is a descent direction.

The following Hadamard Lemma indicates that under certain regularity the variational problem \cref{velocity-continuous-formula} is equivalent to an interface problem.
\begin{lem}[Hadamard]
If $\mathcal{L}(\cdot)$ is shape differentiable at every element $\O$ of class $C^k, \O \subset \hat \O$.
Furthermore, assume that $\partial \O$ is of class $C^{k-1}$. Then there exists a scalar function
$\mathcal{G}(\Gamma_{\O}) \subset \mathcal{D}^{-k}(\Gamma_{\O})$ such that
\begin{equation}
	D_{\O, \bftheta} \mathcal{L}(\O) = \int_{\Gamma_{\O}} \mathcal{G}\bftheta \cdot \bn \,ds.
\end{equation}
\end{lem}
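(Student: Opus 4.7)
The plan is to follow the classical Hadamard--Zol\'esio structure theorem, viewing $\bftheta \mapsto D_{\O,\bftheta}\mathcal{L}(\O)$ as a continuous linear functional on admissible $C^k$ vector fields and showing that it is supported on $\Gamma_{\O}$ and factors through the scalar trace $\bftheta \cdot \bn$ there. The integral representation then follows from Riesz duality (equivalently the Schwartz structure theorem) on the smooth submanifold $\Gamma_{\O}$.

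The first reduction is \emph{interior vanishing}: if $\bftheta$ is compactly supported inside $\O$ (respectively inside $\hat\O \setminus \overline{\O}$), then $T_{t,\bftheta}(x) = x + t\bftheta(x)$ coincides with the identity outside a compact subset of $\O$ and, for $|t|$ small, is a $C^k$-diffeomorphism of $\mathbb{R}^d$, so it maps $\O$ bijectively onto $\O$. Consequently $\O_t(\bftheta) = \O$ as a set, $\mathcal{L}(\O_t) = \mathcal{L}(\O)$, and $D_{\O,\bftheta}\mathcal{L}(\O) = 0$. By linearity and continuity of the shape derivative in $\bftheta$, the functional is then a distribution of order at most $k$ supported on $\Gamma_{\O}$, and factors through the trace $\bftheta\vert_{\Gamma_{\O}} \in C^{k-1}(\Gamma_{\O};\mathbb{R}^d)$; the $C^{k-1}$-regularity of $\Gamma_{\O}$ provides the extension operator needed to make this factorisation rigorous.

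The second reduction is \emph{tangential invariance}: if $\bftheta \cdot \bn = 0$ on $\Gamma_{\O}$, then $D_{\O,\bftheta}\mathcal{L}(\O) = 0$. Using the $C^{k-1}$ parametrisation of $\Gamma_{\O}$, I would build an auxiliary field $\tilde\bftheta$ that coincides with $\bftheta$ on $\Gamma_{\O}$ and whose ODE flow $\Phi_t$ preserves $\Gamma_{\O}$ as a set (obtained by extending the tangential field $\bftheta\vert_{\Gamma_\O}$ to a tubular neighbourhood via the projection onto $T\Gamma_\O$). Then $\Phi_t(\O) = \O$, hence the shape derivative along $\tilde\bftheta$ vanishes, and invariance of the shape derivative under perturbation families sharing the same first-order data together with the first reduction (applied to $\bftheta - \tilde\bftheta$, which vanishes on $\Gamma_\O$) yields the claim. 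Having reduced $D_{\O,\bftheta}\mathcal{L}(\O)$ to a continuous linear form of order at most $k$ in the single scalar $\bftheta \cdot \bn \in C^{k-1}(\Gamma_{\O})$, the Schwartz/Riesz representation theorem delivers a unique $\mathcal{G} \in \mathcal{D}^{-k}(\Gamma_{\O})$ realising the formula, with the boundary integral interpreted as the duality pairing.

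The main obstacle is the tangential invariance step. With the linearised perturbation $T_{t,\bftheta}(x) = x + t\bftheta(x)$, a tangential $\bftheta$ does not exactly preserve $\Gamma_{\O}$; one must either enlarge the class of admissible perturbation families and invoke independence of the shape derivative on that choice (standard but requiring care under the minimal $C^k$/$C^{k-1}$ regularity hypothesised), or give a direct Taylor argument showing that for $\bftheta$ tangential the normal displacement of $\Gamma_\O$ under $T_{t,\bftheta}$ is $O(t^2)$, so that $\mathcal{L}(\O_t) - \mathcal{L}(\O) = o(t)$ by the shape differentiability hypothesis.
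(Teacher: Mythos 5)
The paper does not prove this lemma at all: it is quoted as the classical Hadamard--Zol\'esio structure theorem, with the proof deferred to the cited references (\cite{hadamard1908memoire,delfour2011shapes}). Your outline is precisely the standard proof from Delfour--Zol\'esio --- interior vanishing to localise the distribution on $\Gamma_{\O}$, tangential invariance to factor it through $\bftheta\cdot\bfn$, then Riesz/Schwartz representation --- and it is sound; in particular you correctly identify that the only delicate point is the tangential step, where the perturbation-of-identity map $x+t\bftheta(x)$ does not preserve $\Gamma_{\O}$ exactly and one must either pass to the velocity (flow) formulation and invoke the equivalence of the two notions of shape derivative, or estimate the normal displacement as $O(t^2)$. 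Either resolution is standard, so there is no gap to report, only the reminder that "shape differentiable" must be read as including continuity of $\bftheta\mapsto D_{\O,\bftheta}\mathcal{L}(\O)$ on $C^k$ fields for the support and representation arguments to go through.
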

It therefore follows from the above lemma and \cref{velocity-continuous-formula} that
\begin{equation}\label{hadamond}
	(\nabla \tilde\bfbeta, \nabla  \bftheta)_{\O} + (\tilde \bfbeta, \bftheta)_{\O} = 
	- \int_{\Gamma_{\O}} \mathcal{G} \bftheta \cdot \bfn \,ds.
\end{equation}
Equation \cref{hadamond} indicates that, in strong form, we need to solve the following interface problem for $\tilde \bfbeta$,
\begin{alignat}{3}
	-\triangle \tilde \bfbeta +  \tilde\bfbeta &=0   &\quad & \text{in $\hat \O$},
\\
\jump{D_n  \tilde\bfbeta}|_{\Gamma_{\O}} &= - \mathcal{G} 	& \quad & \text{on $\Gamma_{\O} $},	 
\\	
\jump{ \tilde\bfbeta} &=0  &\quad & \text{on $\Gamma_{\O} $},
\\
 \tilde\bfbeta &= 0  &\quad & \text{on $\partial  \hat \O$}. 	
\end{alignat}
Given that $\Gamma_{\O}$ is smooth and $\mathcal{G} \in H^{1/2}(\Gamma_\O)$,
we also have the following regularity estimate:
\begin{equation}
\begin{split}
\| \tilde \bfbeta\|_{H^1( \hat \O)} + \| \tilde\bfbeta\|_{H^2( \hat \O \setminus \Gamma_\O)} \lesssim \| \mathcal{G}\|_{H^{1/2}(\Gamma_{\O})},
\end{split}
\end{equation}
(see \cite{CZ98}) and hence $ \tilde\bfbeta \in  {H^1(\hat \O)} \cap {H^2(\hat \O \setminus \Gamma_{\O})}$.

%\begin{remark}
%The Hadamard Lemma indicates that the domain type shape derivative is indeed equivalent to the surface type shape derivative under certain regularity. In the computation of the shape derivative in \cref{lemma:sd} that we derived through the classical analysis, we would remain the use the domain type shape derivative since easy transfer is not easily obtainable. For the alternative shape derivative we found in \cref{lemma:sd-bv} itself is of the surface type.
%It is proved in \cite{MartinBerggren} that for fictitious domain method using surface type derivative is more accurate than the domain type. \textcolor{red}{DOUBLE CHECK}. 
%\end{remark}

\subsection{Approximation of the steepest descent velocity using CutFem}\label{sec:beta}

To obtain a numerical approximation for the steepest descent velocity, we also use the CutFEM for interface problem \cite{HaHa02} on a single mesh. We first define the finite element spaces.
Given a closed $d-1$ manifold $\Gamma \subset \hat\O$, define $\O_\Gamma^+ \subset \hat \O$ be the intersection of the domain enclosed by $\Gamma$ and $\Gamma_f$
and define $\O_\Gamma^- = \hat \O \setminus \O_\Gamma^+$.
Also define the finite element space $V_h^+$ and $V_h^-$ by
\[
V_h^+ =\{ v^+ \in H^1(\O^+) : v_1|_{K} \in P^1(K) \quad \forall K \cap \O^+ \neq \emptyset
\},
\]
and
\[
V_h^- =\{ v^- \in H^1(\O^-) : v^-|_{K} \in P^1(K) \quad \forall K \cap \O^- \neq \emptyset
\}.
\]
Note that both $V_h^+$ and $V_h^-$ are defined on ``cut" elements $K \in \cT_h$ such that $K \cap \Gamma \neq \emptyset$.
The finite element solution for $\bfbeta$ is then to find $\bfbeta_h \in V_h^+ \times V_h^-$  such that
\begin{equation}\label{velocity-approx}
\begin{split}
	b_0( \bfbeta_h, \bfv) + j(\bfbeta_h, \bfv)= l(\bfv)
	\quad \forall \, \bfv \in V_h^+ \times V_h^-
\end{split}
\end{equation}
where
\begin{equation}
\begin{split}
	b_0( \bfbeta, \bfv) &= (\nabla \bfbeta^+, \nabla \bfv^+)_{\O^+} 
	+ (\nabla \bfbeta^-, \nabla \bfv^-)_{\O^-}
	- \left<\{D_n \bfbeta\}, \jump{\bfv} \right>_{\Gamma_\O} 
	- \left<D_n \bfbeta, \bfv \right>_{\Gamma_f}   \\
	&- \left<\{D_n \bfv\}, \jump{\bfbeta}\right>_{\Gamma_\O}
	+ \beta_1 h^{-1} \left< \jump{\bfbeta}, \jump{\bfv} \right>_{\Gamma_\O}
	- \left<D_n \bfv, \bfbeta \right>_{\Gamma_f}
	+ \beta_2 h^{-1} \left< \bfbeta, \bfv \right>_{\Gamma_f}
\end{split}
\end{equation}
\begin{equation}
\begin{split}
	j(\bfbeta, \bfv)=\gamma h \left( \sum_{F \in \cE_I^+} \int_F \jump{D_n \bfbeta^+} \jump{D_n \bfv^+}
	+
	\sum_{F \in \cE_I^-} \int_F \jump{D_n \bfbeta^-} \jump{D_n \bfv^-} \right)
\end{split}
\end{equation}
and
\begin{equation}\label{rhs-shape-derivative}
l(\bfv) = -D_{\O, \bfv} \mathcal{L}_h \quad \mbox{ or } \quad
l(\bfv) = -\tilde D_{\O, \bfv} \mathcal{L}_h.
\end{equation}
Here $
	\cE_I^\pm = \{ F\,: \,F \in \cE_I, K \cap \O_\Gamma^\pm \neq \emptyset,  K' \cap \O_\Gamma^\pm \neq \emptyset \mbox{ where } K \cap K' = F\}.
$
%\begin{equation}\label{shape-derivative-approx}
%\begin{split}
%&D_{\O, \bftheta} \mathcal{L}(\o, u_{1,h}, u_{2,h}, p_{1,h}, p_{2,h}) \\ 
%&=\int_{\O} (\divvr \bftheta) \left(\dfrac{1}{2}(\nabla (u_{1,h} - u_{2,h}))^2 - \nabla u_{1,h} \cdot \nabla p_{1,h} 
%- \nabla u_{2,h}\cdot \nabla p_{2,h}  +  f (p_{1,h} + p_{2,h})\right) \,dx\\
%&- \dfrac{1}{2} \int_{\O} \nabla (u_{1,h} - u_{2,h})S(\bftheta)  \nabla (u_{1,h} - u_{2,h}) \,dx
%+  \int_{\O}(\nabla u_{1,h}) S(\bftheta)(\nabla p_{1,h}) \,dx\\
%&+
%\int_{\O}( \nabla u_{2,h}) S(\bftheta) (\nabla p_{2,h}) \,dx
%+ \int_{\O} (\nabla f \cdot \bftheta) (p_{1,h}+p_{2,h}) \,dx.
%\end{split}
%\end{equation}

\subsection{Level set update}
With the steepest direction on hand, we now aim to update the free boundary.
By introducing the pseudo-time, we aim to find the level set function $\phi(x+ t \bfbeta(x),t)$ for some given $\bfbeta$ such that 
\[ \phi(x+ t \bfbeta(x),t) = \phi(x,0) \quad \forall \, t \mbox{ and } \forall \, x \in \hat \O.
\]
Taking the derivative with respect to $t$ gives that
\[
	\nabla_x \phi \cdot \bfbeta + \dfrac{\partial \phi}{\partial t}  =0,
\]
which yields a Hamilton-Jacobi equation, if the nonlinear dependence of $\bfbeta$ on the optimization is accounted for. However for fixed vector field $\bfbeta$ this is simply an advection problem with a non-solenoidal transport field.
\begin{remark}
	Note that the level set function chosen at the initial stage is the distance function. However, after evolution steps the updated level set function no longer holds the property of a distance function. This could cause potential problems, for accuracy if the magnitude of the gradient locally becomes very small and for the stability of the numerical scheme if the gradient becomes very large. It is well known that the issue can be resolved by redefining $\phi$ regularly as the distance function while keeping the interface position fixed. In the numerical examples presented herein we did not notice any need for re-distancing, since an advection stable scheme was used to propagate the interface.
\end{remark}
To approximate the Hamilton-Jacobi equation, we use Crank-Nicolson scheme in time combining with gradient penalty stabilization in space for the advection problem \cite{BF09,Bu11}. We keep the same background mesh for the transport of the level set function. 
%And we use the standard finite element discretization of \cref{H-J}.

For each $\O$, let $T =r*\dfrac{J(\O)}{\|\bfbeta_h\|_{H^1(\hat \O)}}$, where $r$ is the learning rate. 
First divide $[0,T]$ into $N$ equal length steps and let $\delta t = T/{N}$ and $t_i = i \delta_t$ for $ i =0, \cdots N$. 
 Denote by $\phi_h^n = \phi_h(t_n)$. Given the initial level set $\phi_h^0$, find $\phi_h^n \in V_h$ for $n =1, \cdots, N$  such that
\begin{equation}\label{H-J-approx}
	\left( \dfrac{\phi_h^n - \phi_h^{n-1}}{\delta t},w \right)_{D} 
	+ \left(\bfbeta_h \cdot \nabla \dfrac{\phi_h^n + \phi_h^{n-1}}{2},w \right)_{D}
	+ r_h\left( \dfrac{\phi_h^n + \phi_h^{n-1}}{2} ,w\right)
\end{equation}
where
\[
r_h(v ,w) = \sum_{F \in \cE_I} \gamma_2 h^2 \int_F \jump{D_n v} \jump{D_n w} \,ds
\]
with $\gamma_2>0$ is a parameter and $\cE_I$ is the set of all interior facets in $\cT_0$.
In the numerics, we chose $r=1.0$ or $0.5$, $N =10$ and $\gamma_2 = 1.0$.

Below we summarize the algorithm.
\begin{algorithm}[h!] 
\caption{Bernoulli Free Boundary Identification}
\label{alg:buildtree}
\begin{algorithmic}
\STATE{Input an initial level set $\phi_h$ and specify the tolerance.}
\WHILE{ J $>$ tolerance }
	\STATE{Compute the primal solution $u_{h}$ by \cref{u-h} and the dual solution $p_h$ by \cref{p-h}. }
	\STATE{Compute $J$.}
%	\IF{$J < TOL$}
%		\STATE{break.}
%	\ENDIF
%	\STATE{Compute the residual $J$ based on \cref{cost-functional} .}
	\STATE{Compute the velocity $\bfbeta_h$ by \cref{velocity-approx}.}
	\STATE {Compute $T =r*\dfrac{J}{\|\bfbeta_h\|_{H^1(\hat \O)}}$. ($0 < r <1$ is the learning rate).}
	\STATE{Normalize $\bfbeta_h$.}
	\STATE{Compute $\phi_h(x,T)$ by \cref{H-J-approx}.}
	\STATE{Set $\phi_h(x,0) = \phi_h(x,T)$}.
\ENDWHILE
\end{algorithmic}
\end{algorithm}
%The parameters in the algorithm is set as follows:

%\begin{equation}
%\begin{split}
%\end{split}
%\end{equation}

\section{Numerical experiments}\label{sec:Numerical Experiments}
In the numerical experiments we mainly aim to compare the performances of the three different shape derivatives, i.e., the classical shape derivative (SD) given in \cref{shape-derivative-formula} obtained based on the first optimize then discretize approach, \cref{shape-derivative-formula-h1} obtained based on the first discretize then optimize approach,  and the  \cref{shape-derivative-a} obtained based on the boundary correction approach. For simplicity, in this section we refer the three shape derivatives as the \textit{continuous SD, discrete SD} and \textit{boundary SD}.

For the CutFEM method a regular fixed background mesh is used. For all numerical experiments in this paper we will use the unit square domain as the background domain, i.e.,  $\hat \O = [0,1]^2$ and the background mesh is a uniform $100 \times 100$ mesh. The penalty parameters in \cref{a-h} are chosen as $\gamma =0.1$ and $\beta=10$. And in \cref{velocity-approx}, the parameters are chosen such that $\beta_1 = \beta_2 = 10.0$ and $\gamma = 1.0$.

\begin{example}[Circle]\label{ex:circle} 
We recall the problem:
\begin{equation}
\begin{split}
 &-\triangle u = f \quad \mbox{in} \;\Omega, \\
 &u = 0  \quad \mbox{on} \;\Gamma_{\O},\\
 &u = g_D, \; D_n u = g_N \quad  \mbox{on}\;  \Gamma_{f}.
\end{split}
\end{equation}
For this example, the free boundary $\Gamma_{\O}$ is the circle with radius $r = 1/4$ and center being $(0.5, 0.5)$.
\end{example}
We chose to use the following data set:
\begin{equation}\label{data-set}
	u = 4r -1 \mbox{ and } \; f = -4/r.
\end{equation}
We note that the data set is not unique and indeed there are infinitely many choices. 

We start with the following initial level set:
\[
	\phi(r, \theta) = -r + 1/8,
\]
which is a smaller circle with the same center as the true interface (see the inner most  red circle in  \cref{fig:level-ser1-1}).

%\begin{figure}[h]\label{fig:Ex1-innerball}
%\centering
%\includegraphics[width=0.80\textwidth]{.pdf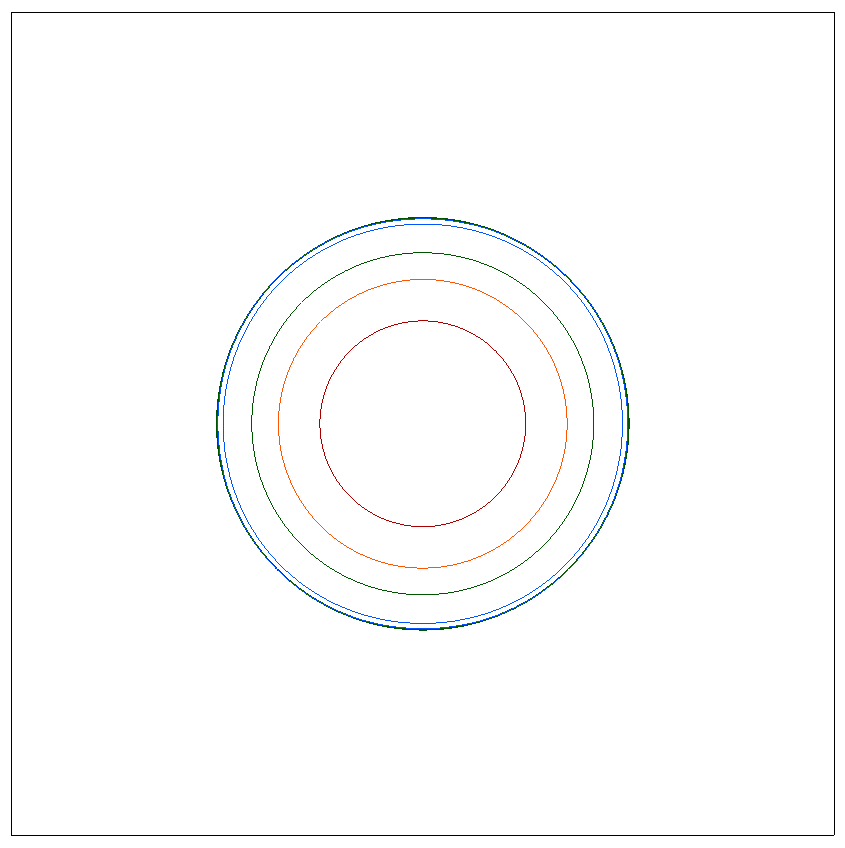}
%\caption{At steps $0, 1,2, 5$ and $10$}.
%\end{figure}

%I do not understand  what you are trying to say here nor what Figures you are referring to. Is there any evidence of what you claim here? What do you mean by degenerate? You claim that you arrive at a local minimum, but it could also be the functional that is very flat and deformed, causing the residual to oscillate? It is very important, in particular when reporting negative results, to be very clear and not overstate the problem observed.

\begin{figure}
    \centering
    \subfloat[]{\includegraphics[width=0.40\textwidth]
    {Ex1_0-1-2-5-10.png} 
        \label{fig:level-ser1-1}}
    \hfill
        \subfloat[]{\includegraphics[width=0.52\textwidth]{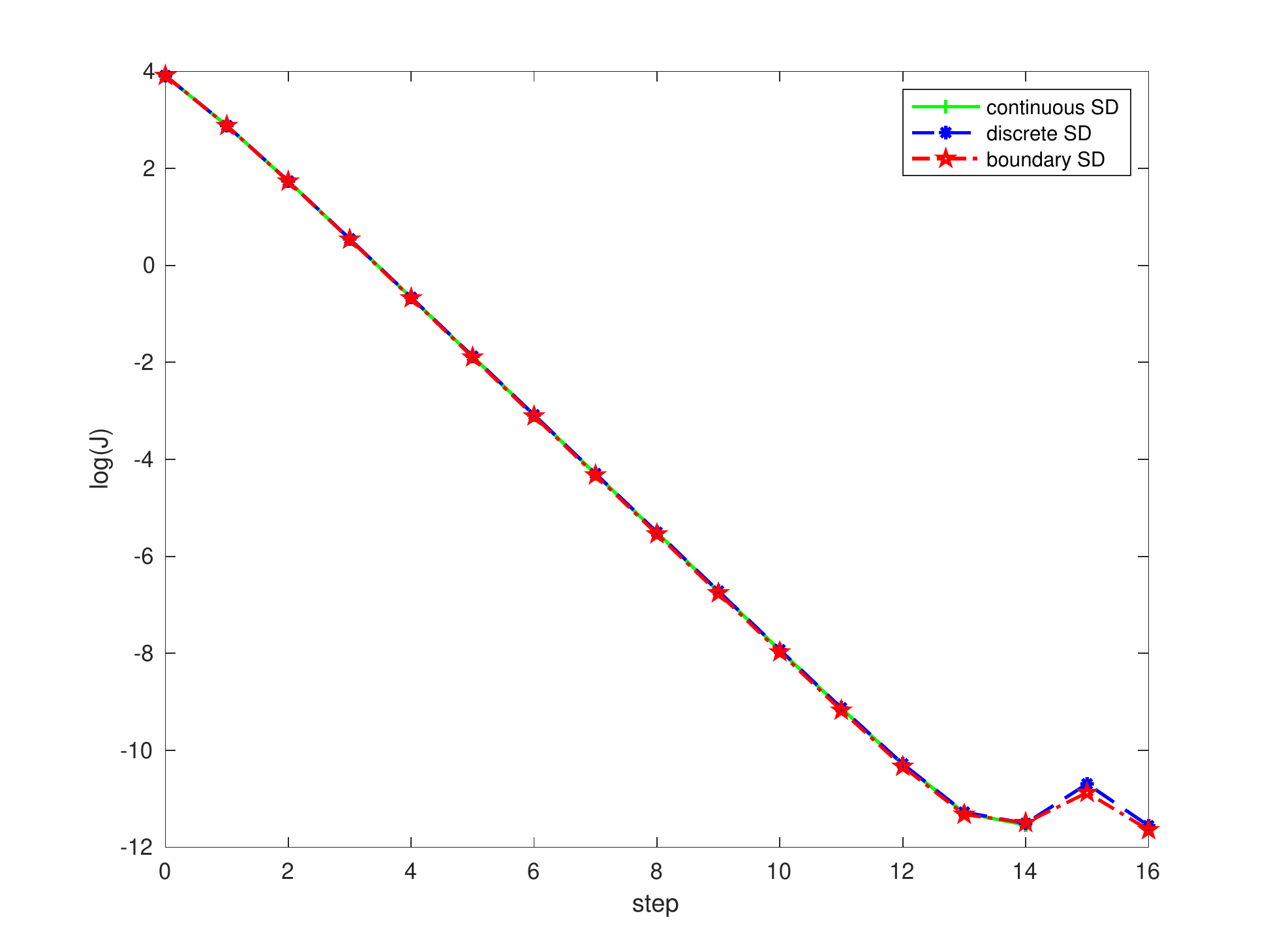}
       \label{fig:Ex1-innerball}}
\caption{\cref{ex:circle}: {\upshape{(a):}} level sets at steps $0, 1,2, 5$ and $10$;  {\upshape{(b):}} the comparison of residual evolution}
\end{figure}

The stopping criteria is set such that $J(\Omega)\le 1E-5$. 
It takes $14$, $16$ and $16$ steps, respectively, using the continuous SD, discrete SD and boundary SD
%shape derivatives in \cref{shape-derivative-formula},  \cref{shape-derivative-formula-h1} and \cref{shape-derivative-a} 
to reach the stopping criteria. In this case, the performances between those three shape derivatives are almost identical. \cref{fig:level-ser1-1} shows the level sets at steps $0, 1, 2, 5$ and $10$ (from the inner most the to outer most circles) for all three shape derivatives. The level set at step $0$ is the initial guess of level set. At step 10, the computed level set almost coincides with the true level set. \cref{fig:Ex1-innerball} shows the decreasing log rate of the residuals. In this case all residuals converge at a uniform rate.

%\begin{rem}[The effect of different initial guess]
We now test with an initial level set as an ellipse (see the red curve in \cref{fig:Ex1-ellipse-0}):
\[
	\phi(x, y) = - \dfrac{(x -0.5)^2}{c_1^2} - \dfrac{(x -0.5)^2}{c_2^2} +1, \;\mbox{where} \, c_1 = 3/8, \mbox{ and } c_2 = 1/8.
\]
\cref{fig:Ex1-ellipse-0} - \cref{fig:Ex1-ellipse-50} show the obtained level sets at steps $0$, $5, 10, 20$ and $50$  
using the continuous SD (green), discrete SD (blue) and boundary SD (red).
With the same stopping criteria that $J \le 1E-5$, it takes $169$ , $155$, and $123$ steps respectively for the continuous SD, discrete SD and boundary SD. We note that in this case using the boundary SD shows slightly better performance.

In \cref{fig:Ex1-ellipse-residual-100} we compare the residual evolution for the first $100$ steps. We note that there are two different convergence patterns for all shape derivatives: in the first $20$ steps the residual is decreasing at a uniform fast rate and afterwards evolves at a much slower rate. %For this case, we also observe similar shape evolution for all SDs.  

\begin{figure}
    \centering
    \subfloat[]{\includegraphics[width=0.45\textwidth]
    {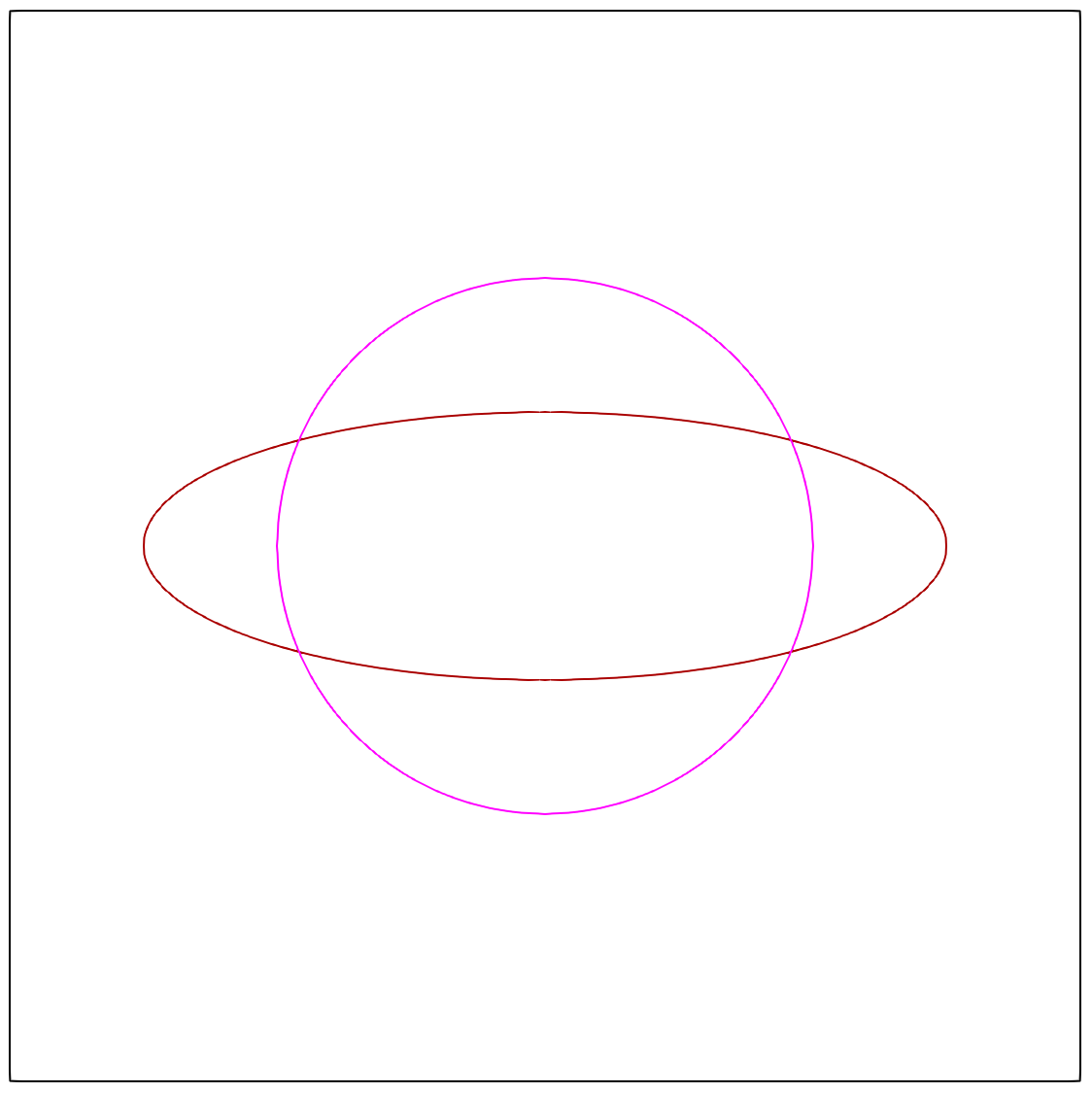} 
        \label{fig:Ex1-ellipse-0}}
    \hfill
        \subfloat[]{\includegraphics[width=0.45\textwidth]{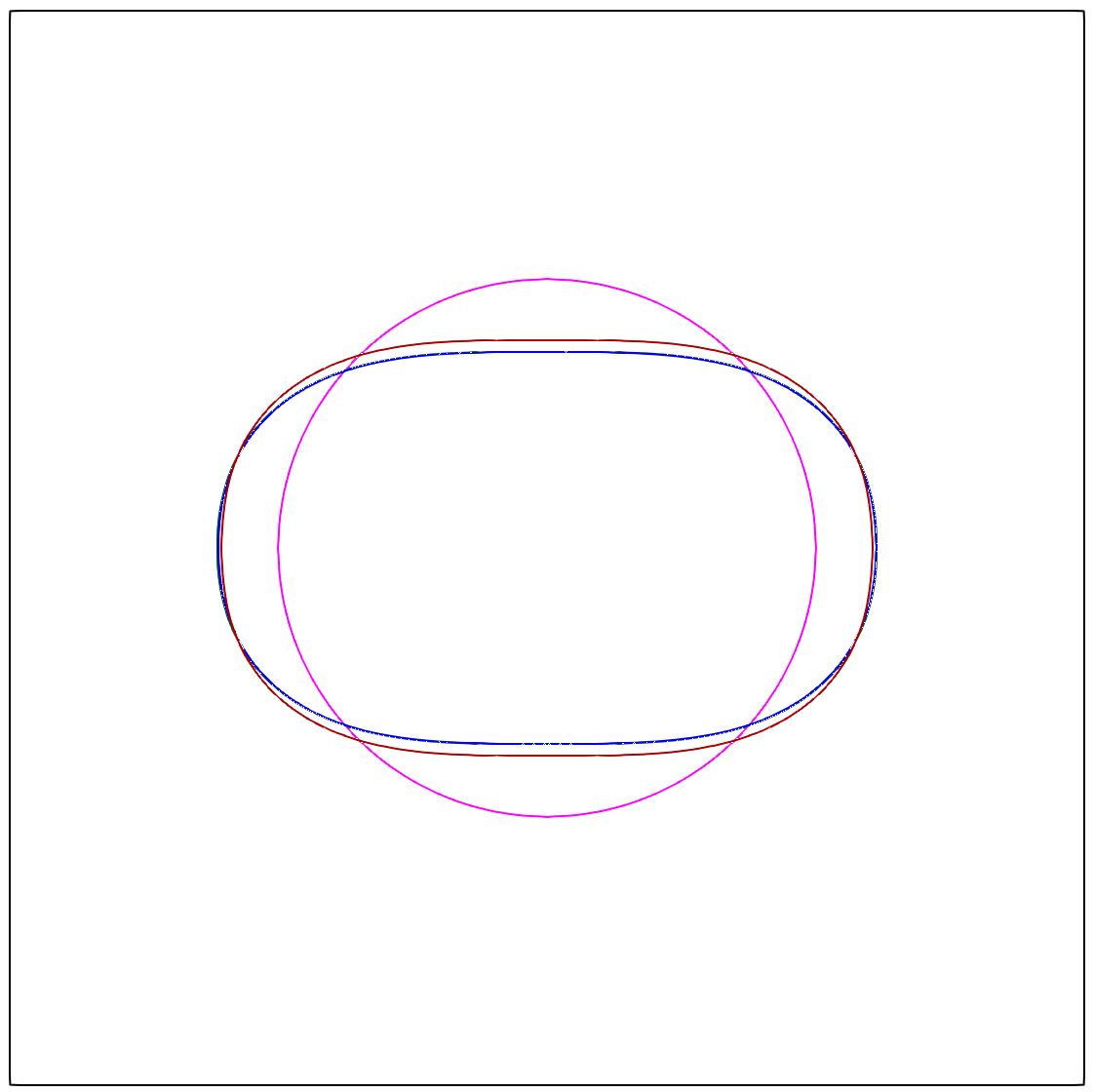}
       \label{fig:Ex1-ellipse-5}}
          \hfill
        \subfloat[]{\includegraphics[width=0.45\textwidth]
    {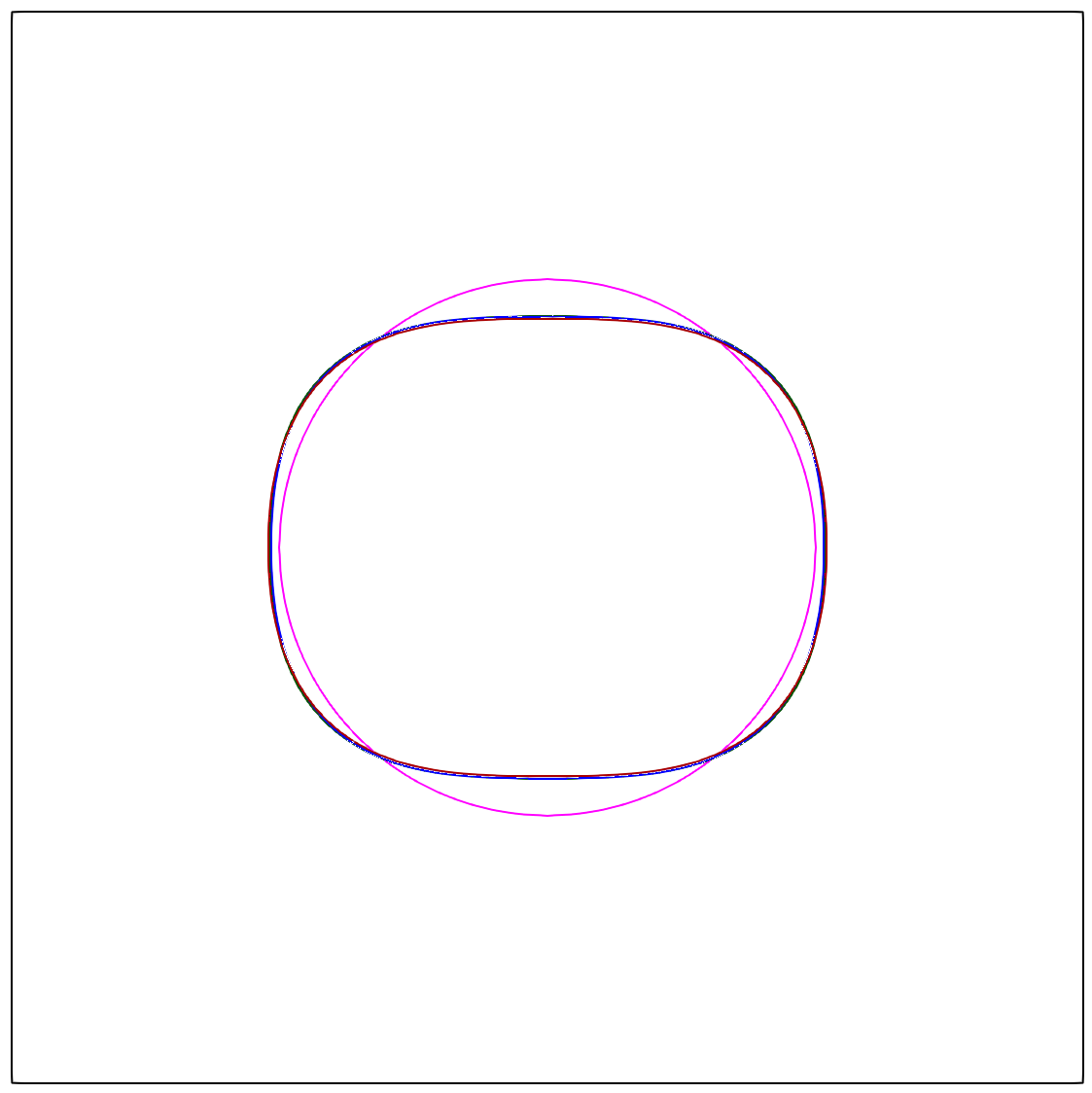} 
        \label{fig:Ex1-ellipse-10}}
    \hfill
    \subfloat[]{\includegraphics[width=0.45\textwidth]
    {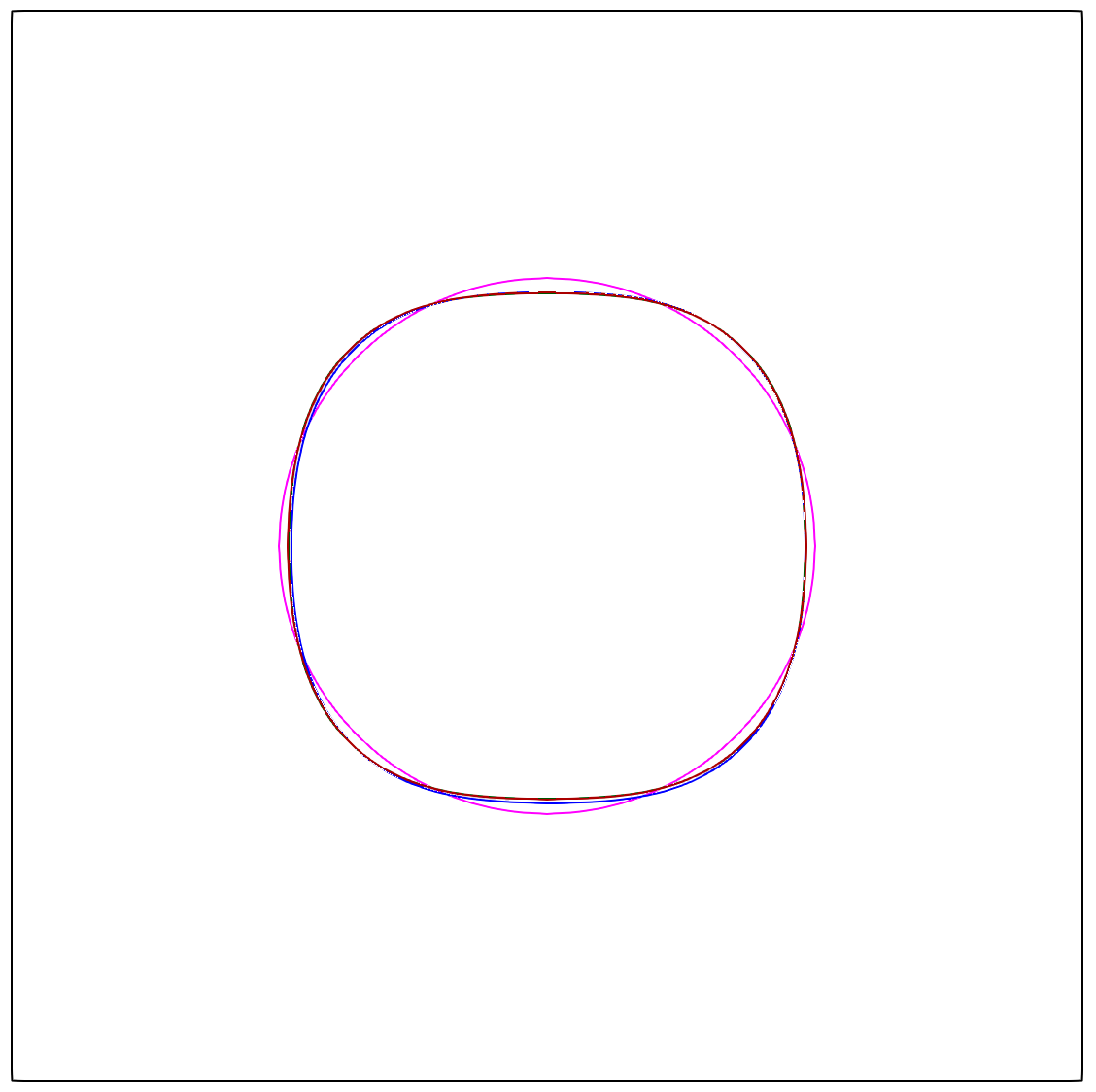} 
        \label{fig:Ex1-ellipse-20}}
    \hfill
            \subfloat[]{\includegraphics[width=0.45\textwidth]
    {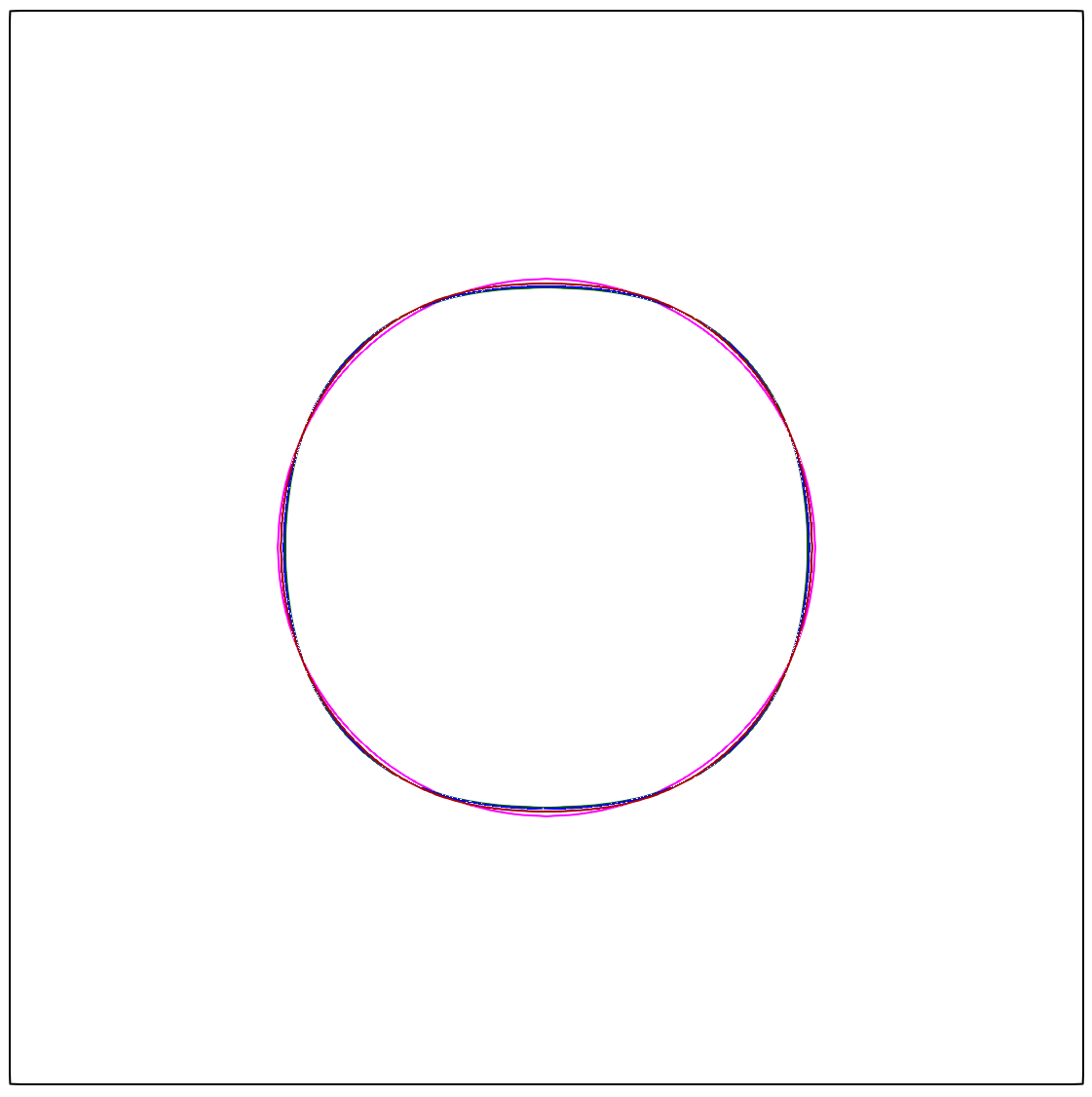} 
        \label{fig:Ex1-ellipse-50}}
    \hfill
        \subfloat[]{\includegraphics[width=0.45\textwidth]
    {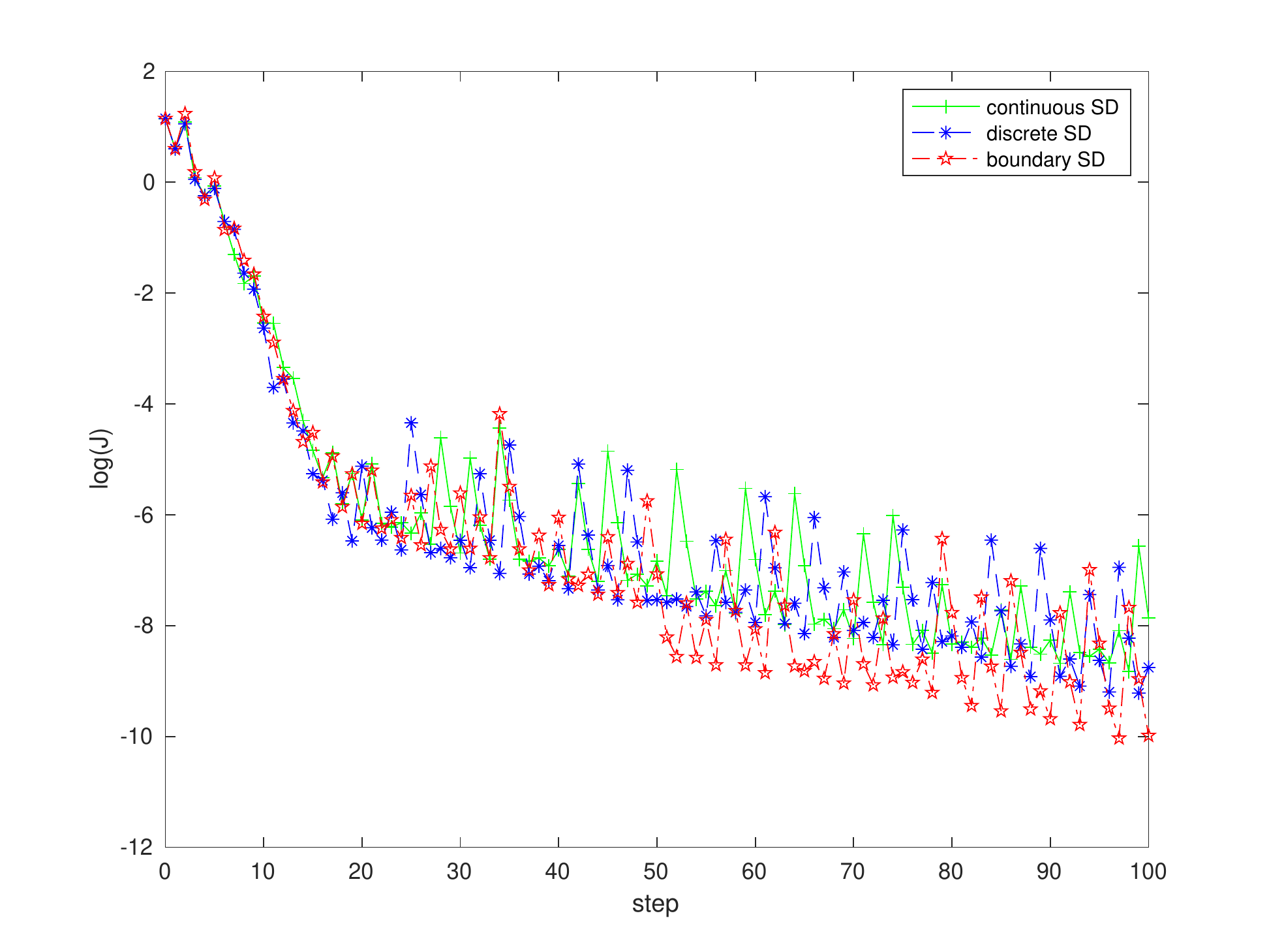} 
        \label{fig:Ex1-ellipse-residual-100}}
\caption{\cref{ex:circle}: level sets at steps $0${\upshape{(a)}}, $5${\upshape{(b)}}, $10${\upshape{(c)}} ,$20${\upshape{(d)}}, $50${\upshape{(e)}} and the comparison of residual evolution {\upshape{(d)}}}
\end{figure}

%We then test a level set that has no intersection with the true one. The level set is chosen as follows:
%\[
%	\phi(x, y) = - \dfrac{(x -0.8)^2}{c_1^2} - \dfrac{(x -0.8)^2}{c_2^2} +1, \;\mbox{where} \, c_1 = 1/5, \mbox{ and } c_2 = 1/8.
%\]
%In this case, for all SDs, the shape degenerates with the data in \cref{data-set}. 
%\cref{fig:degeneration} shows the level sets at steps $10$, $20$ and $30$ using the shape derivative given in \cref{shape-derivative-a}.  
%%The iteration stops at the step $46$
%%for the  since valid level set function can be no longer generated.
%%The learning rate is chosen to be $1$. It could be possible that by choosing a larger learning rate the shape 
%\begin{figure}
%   \centering
%\includegraphics[width=0.50\textwidth]{Ex1_degeneration.png} \label{fig:degeneration}
%\caption{Degeneration: at steps $10, 20$ and $30$}
%\end{figure}
%Figures 1-3 indicate  that a good initial guess is critical. 
If the initial guess is not properly chosen, the iterative procedure might need much more steps to converge. Moreover, since we are using a gradient method, the minimum obtained is a local minimum.
We also note that it is natural that the residual oscillates when the pseudo time step is fixed. Also observe that although the SD may be exact for the discrete formulation it is not necessarily in the finite element space and must nevertheless be approximated.

\begin{example}[Ellipse]\label{ex:2}
%We consider the following problem:
%\begin{equation}
%\begin{split}
% &-\triangle u = f \quad \mbox{in} \;\Omega, \\
% &u = 0  \quad \mbox{on} \;\Gamma_{\O},\\
% &u = g_D, \; D_n u = g_N \quad  \mbox{on}\;  \Gamma_{f}.
%\end{split}
%\end{equation}
For this example, the free boundary $\Gamma_\O$ is an ellipse (see the magenta curve in \cref{fig:Ex2-circle-0}) with the following level set representation:
\[
	\phi(x,y) = -16(x-0.5)^2 - 64(y - 0.5)^2 + 1.
\]
We chose to use the data set such that $f=0$, $g_N = (\sin(x+y), \cos(x+y)) \cdot \bn$ and $g_D$ is obtained by solving the forward problem on a $500 \times 500$ mesh.
\end{example}

We start with an initial level set of a circle (see the red circle in \cref{fig:Ex2-circle-0}):
\[
	\phi(x,y) = -\sqrt{(x-0.6)^2 + (y - 0.4)^2} + 1/6,
\]
which is partially intersected with the true interface. 
\cref{fig:Ex2-circle-0}--\cref{fig:Ex2-circle-120}  show the obtained level sets at steps $0, 5, 10, 50$ and $120$  
using the continuous SD (green), discrete SD (blue) and the boundary SD (red).
% \cref{shape-derivative-a} (red),\cref{shape-derivative-formula-h1}(blue) and \cref{shape-derivative-formula}(green). 
With the stopping criteria that $J \le 1E-5$, it takes $120$ , $154$, and $146$ steps respectively for the continuous SD, discrete SD and boundary SD. 
We again observe  that the level sets and the residual revolution of three methods are all very similar. 
However, the number of steps that it takes to reach the stopping criteria could differ quite a lot due to its slow convergence rate and oscillating character of the costl functional. 

\begin{figure}
    \centering
      \subfloat[]{\includegraphics[width=0.45\textwidth]
    {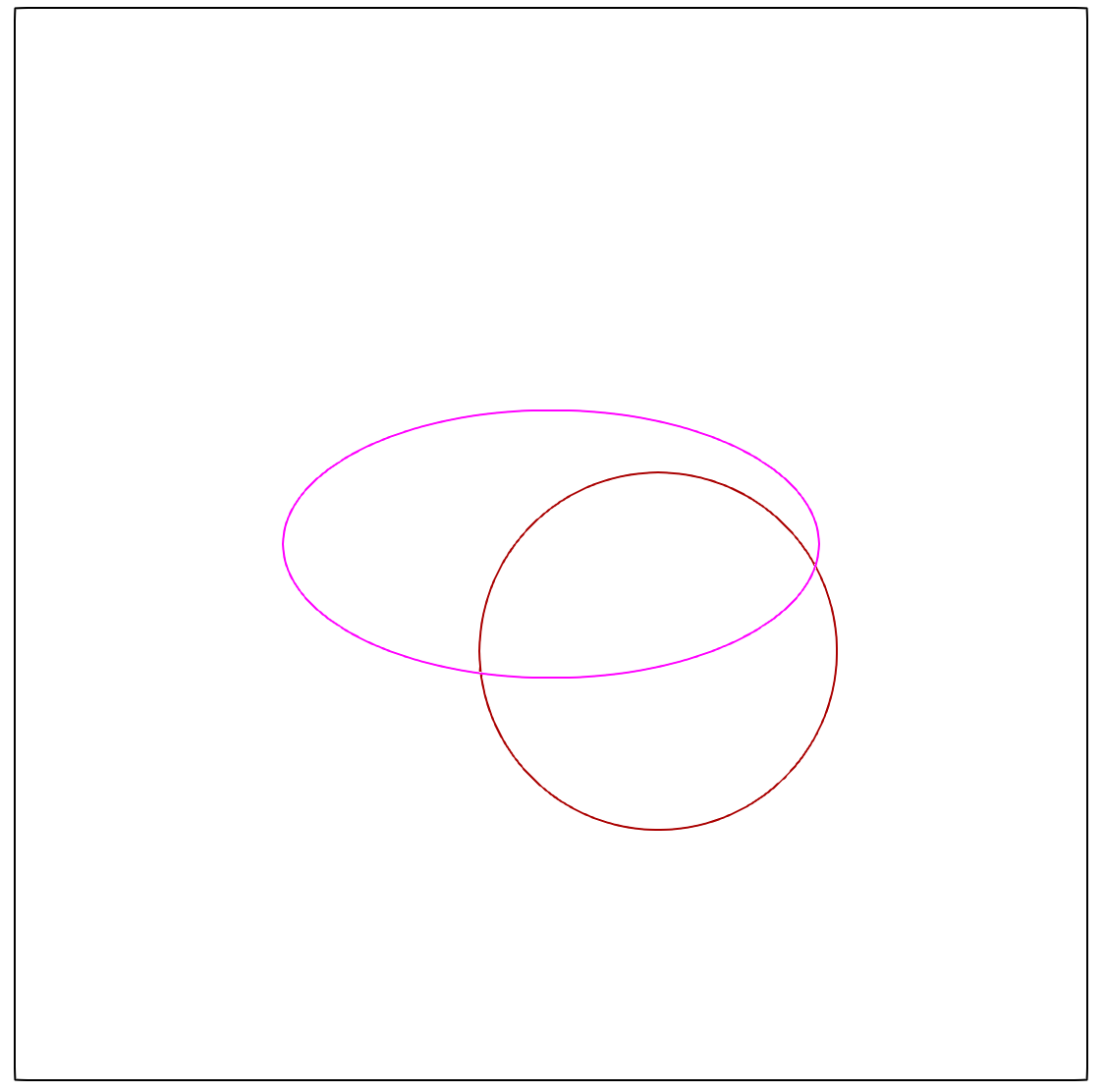} 
        \label{fig:Ex2-circle-0}}
    \hfill
    \subfloat[]{\includegraphics[width=0.45\textwidth]
    {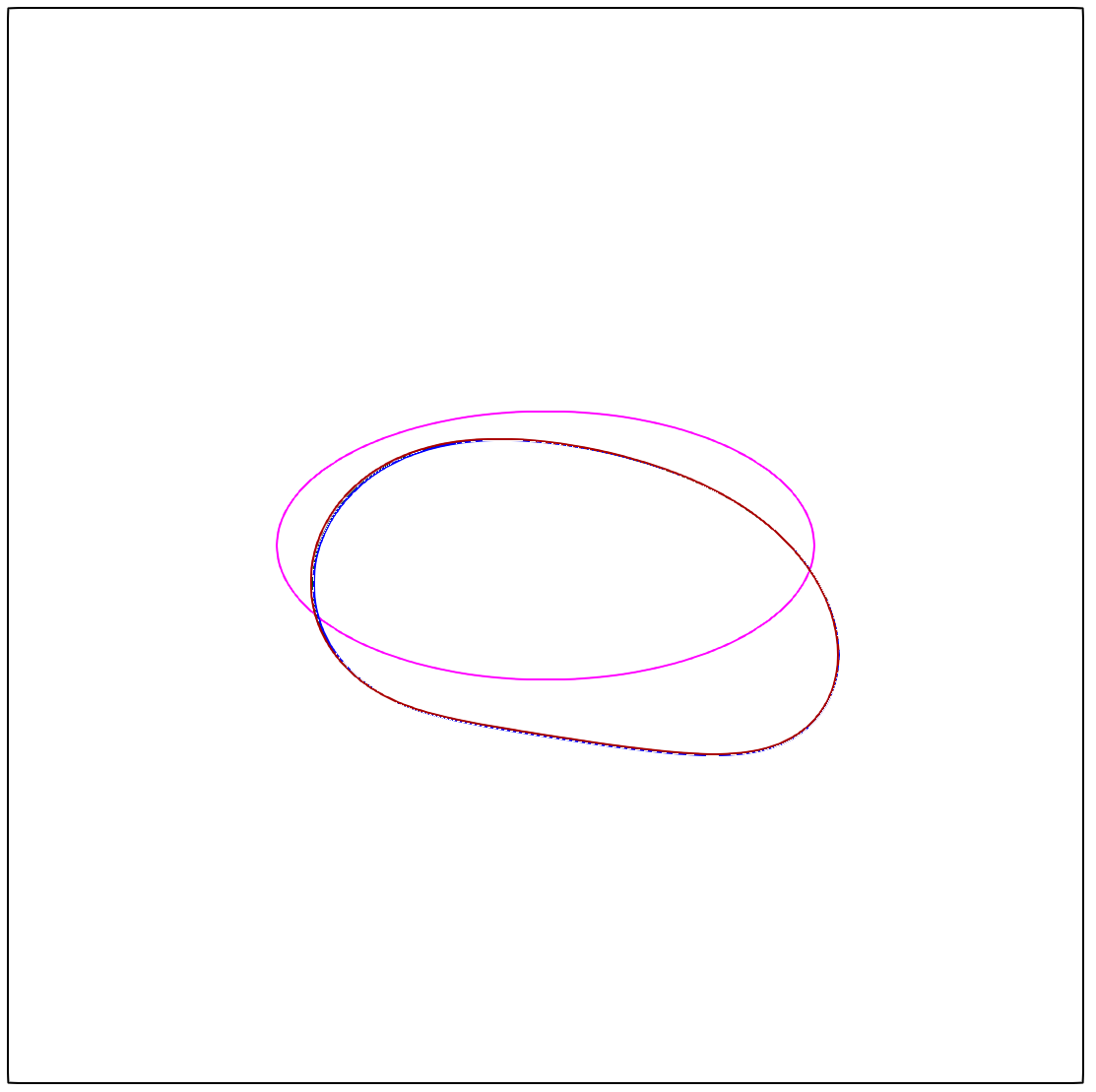} 
        \label{fig:Ex2-circle-5}}
    \hfill
        \subfloat[]{\includegraphics[width=0.45\textwidth]{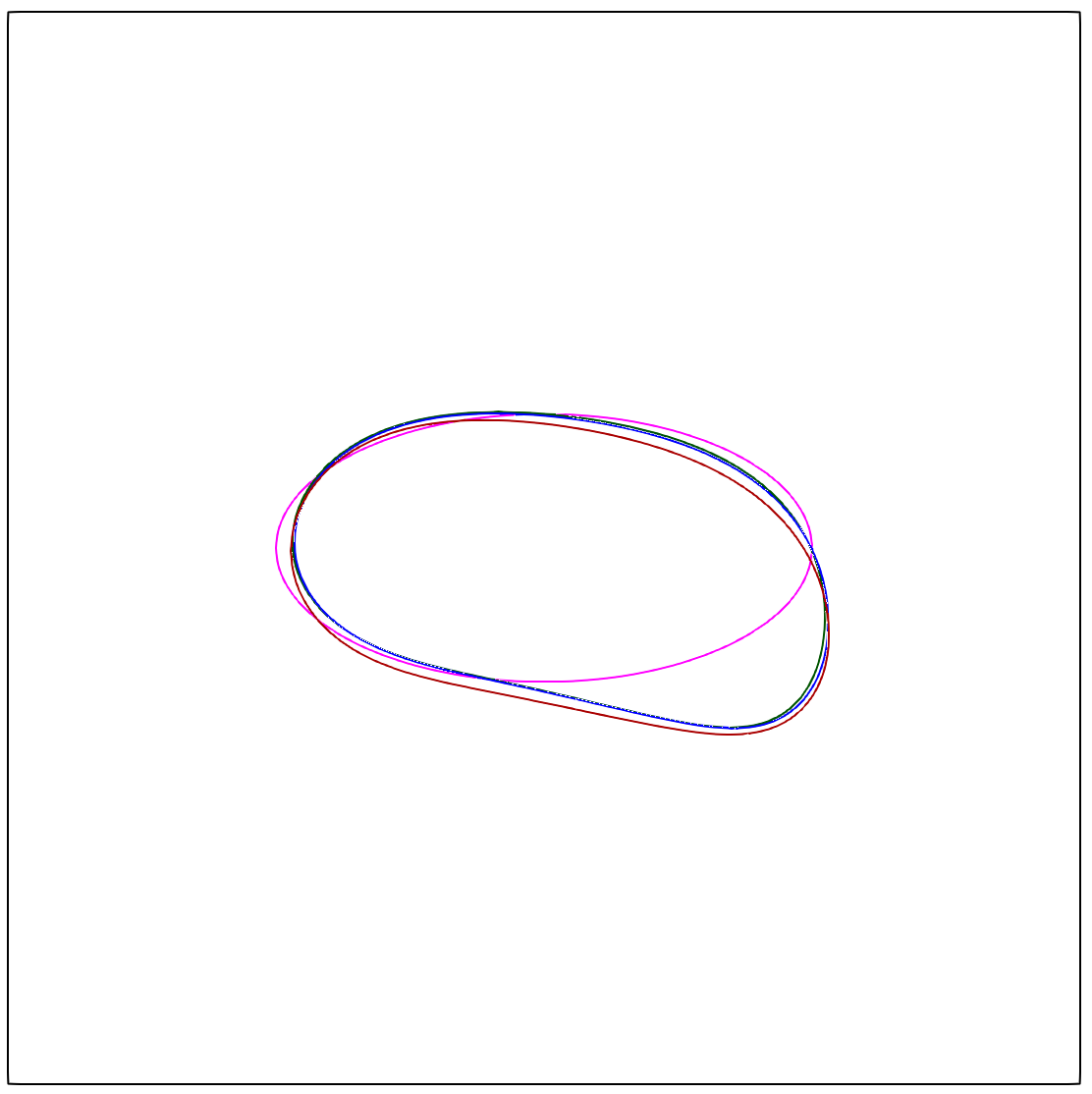}
       \label{fig:Ex2-circle-10}}
       \hfill
        \subfloat[]{\includegraphics[width=0.45\textwidth]
    {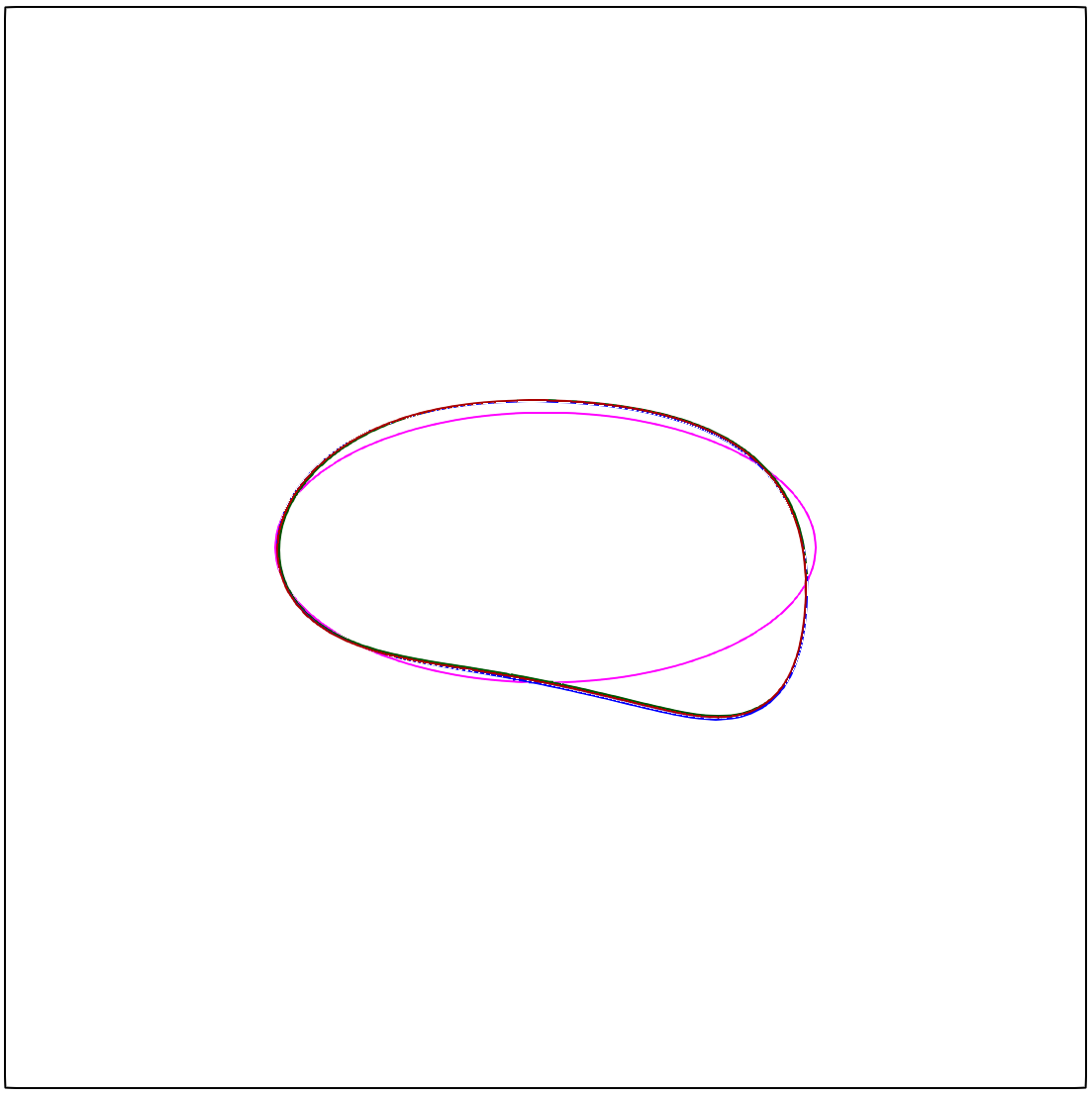} 
        \label{fig:Ex2-circle-50}}
               \hfill
              \subfloat[]{\includegraphics[width=0.45\textwidth]
    {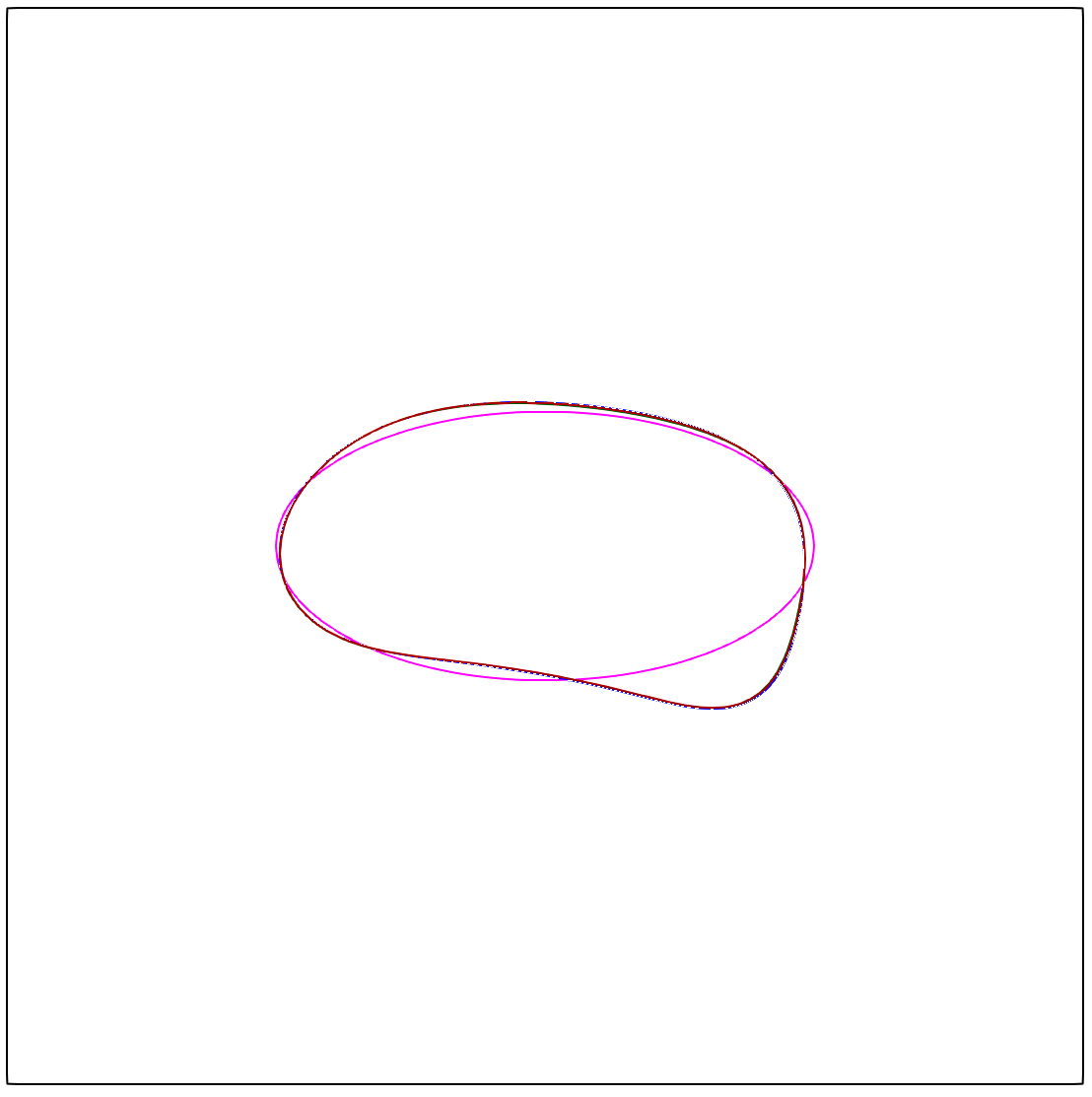} 
        \label{fig:Ex2-circle-120}}
               \hfill
        \subfloat[]{\includegraphics[width=0.45\textwidth]
    {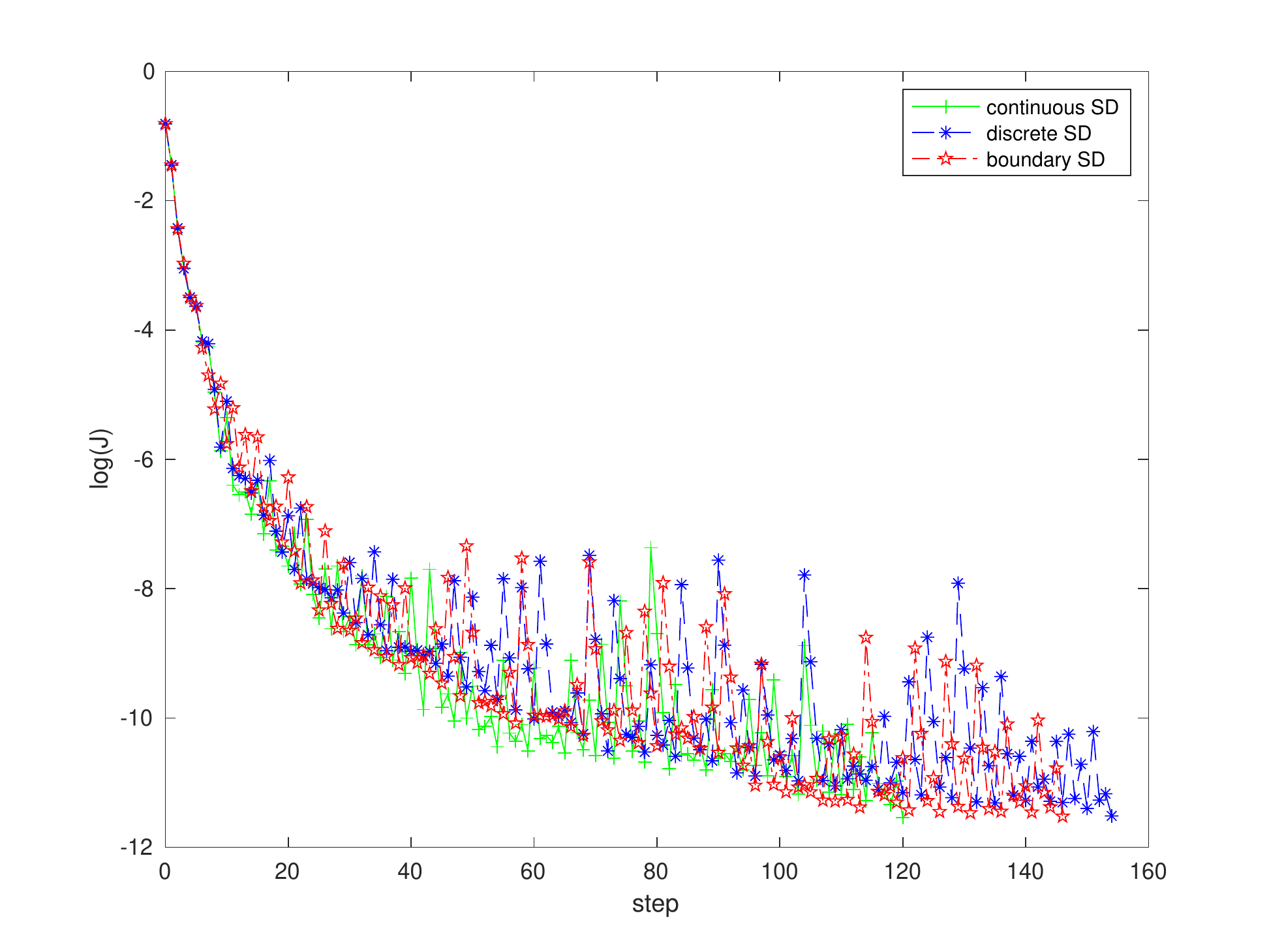} 
        \label{fig:Ex2-ellipse-circle-residual}}
\caption{\cref{ex:2}: level sets at steps $0${\upshape{(a)}}, $5${\upshape{(b)}}, $10${\upshape{(c)}}, $50${\upshape{(d)}} and $120${\upshape{(e)}} and the comparison of residual evolution {\upshape{(f)}}}
\end{figure}

\begin{example}[Lam\'e Square]\label{ex:lame-square}
%We consider the following problem:
%\begin{equation}
%\begin{split}
% &-\triangle u = f \quad \mbox{in} \;\Omega \\
% &u = 0  \quad \mbox{on} \;\Gamma_{f}\\
% &u = g_D, \; D_n u = g_N \quad  \mbox{on}\;  \Gamma_{f}.
%\end{split}
%\end{equation}
For this example  the free boundary $\Gamma_\O$ is a  Lam\'e Square that has the following level set representation (see the magenta curve in \cref{fig:ex3-a0}):
\[
	\phi(x,y) = -81(x - 0.5)^n - 1296(y - 0.5)^n +1, \quad n=4 .
\]
The level set becomes closer to a rectangle as the integer $n$ increases.
We chose  the data such that $f=0$, $g_N = (5\sin(\theta),5 \cos(\theta)) \cdot \bn$ where $\theta =\tan^{-1}( (y-0.5)/(x-0.5))$ and $g_D$ is obtained by solving the forward problem on a $500 \times 500$ mesh.
\end{example}

We firstly start with circle as the initial level set, (see the red circle in \cref{fig:ex3-a0})
\[
	\phi(x,y) = -\sqrt{(x-0.5)^2 + (y - 0.5)^2} + 1/8.
\]
\cref{fig:ex3-a}-\cref{fig:ex3-d} show the level sets at steps $5, 10$ and $50$ and $150$
obtained by the continuous SD (green), discrete SD (blue) and boundary SD (red). With the stopping criteria that $J \le 5E-6$ and maximal iteration number not exceeds $200$, it takes $173$, $174$, and $200$ steps respectively using the continuous SD, discrete SD  and boundary SD. In this case, again, continuous and discrete SDs behaves almost identical. However, the level sets produced by the modified SD are slightly different.
\begin{figure}
    \centering
       \subfloat[]{\includegraphics[width=0.45\textwidth]
    {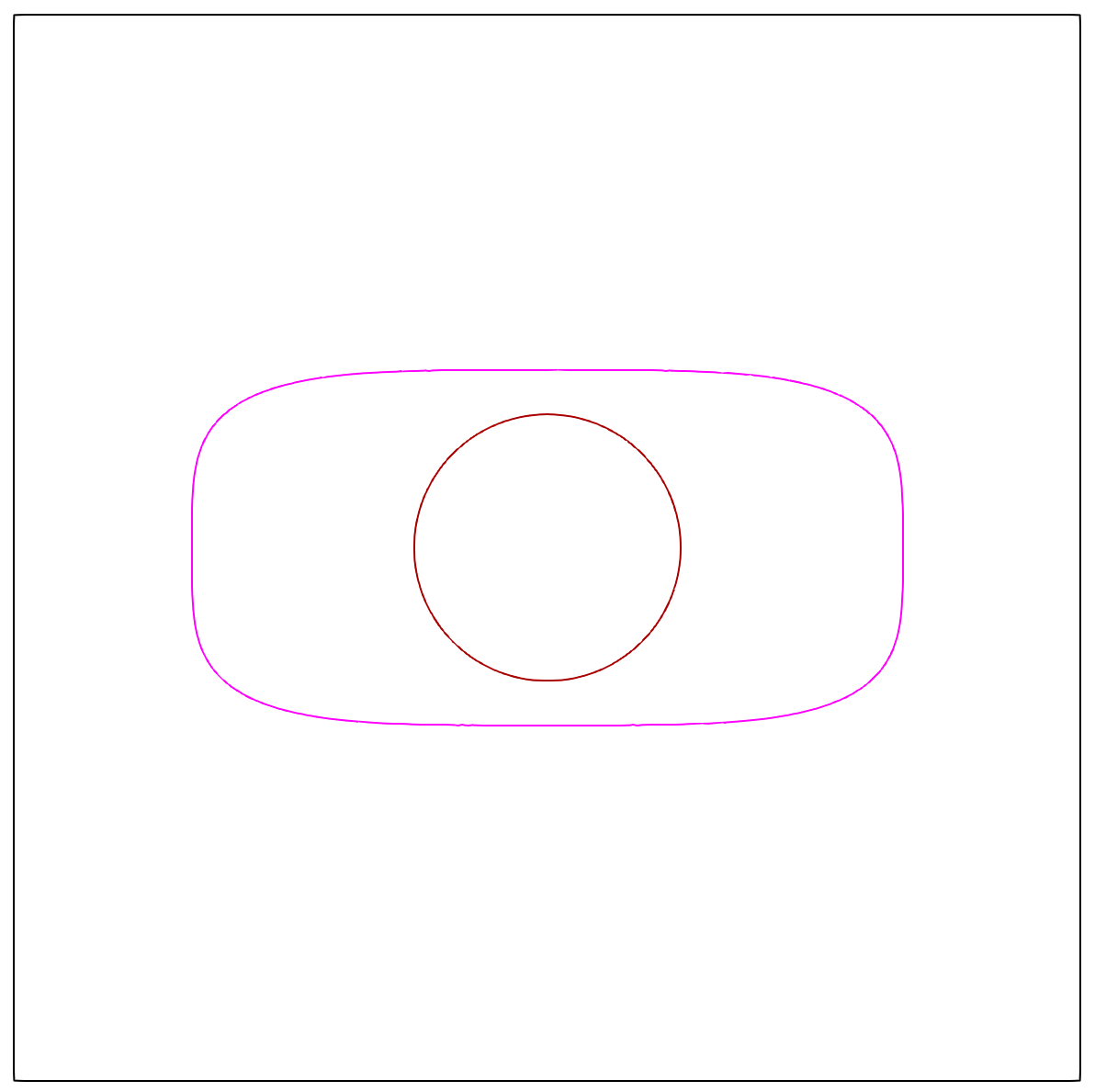} 
        \label{fig:ex3-a0}}
    \hfill
    \subfloat[]{\includegraphics[width=0.45\textwidth]
    {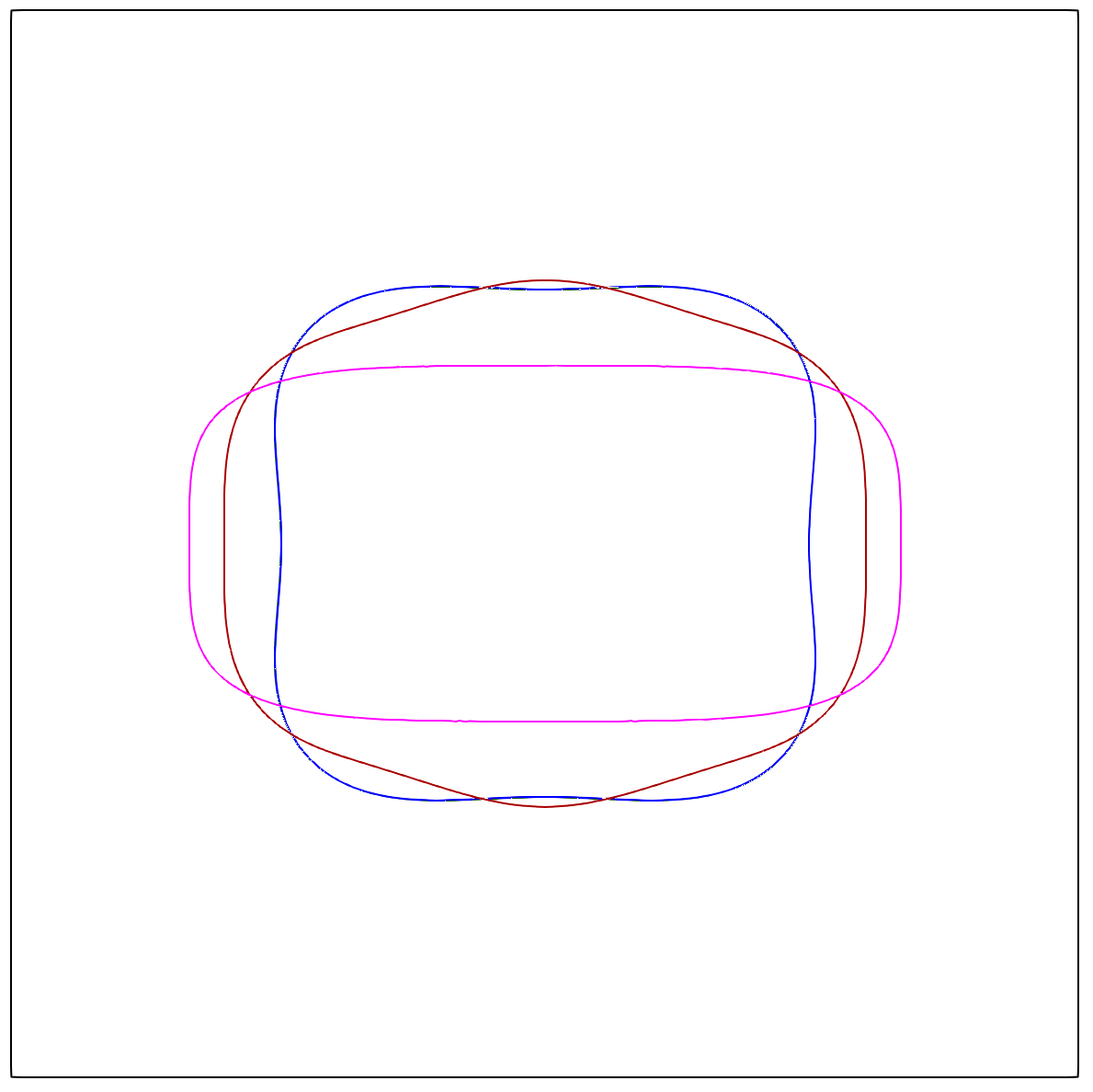} 
        \label{fig:ex3-a}}
    \hfill
        \subfloat[]{\includegraphics[width=0.45\textwidth]
    {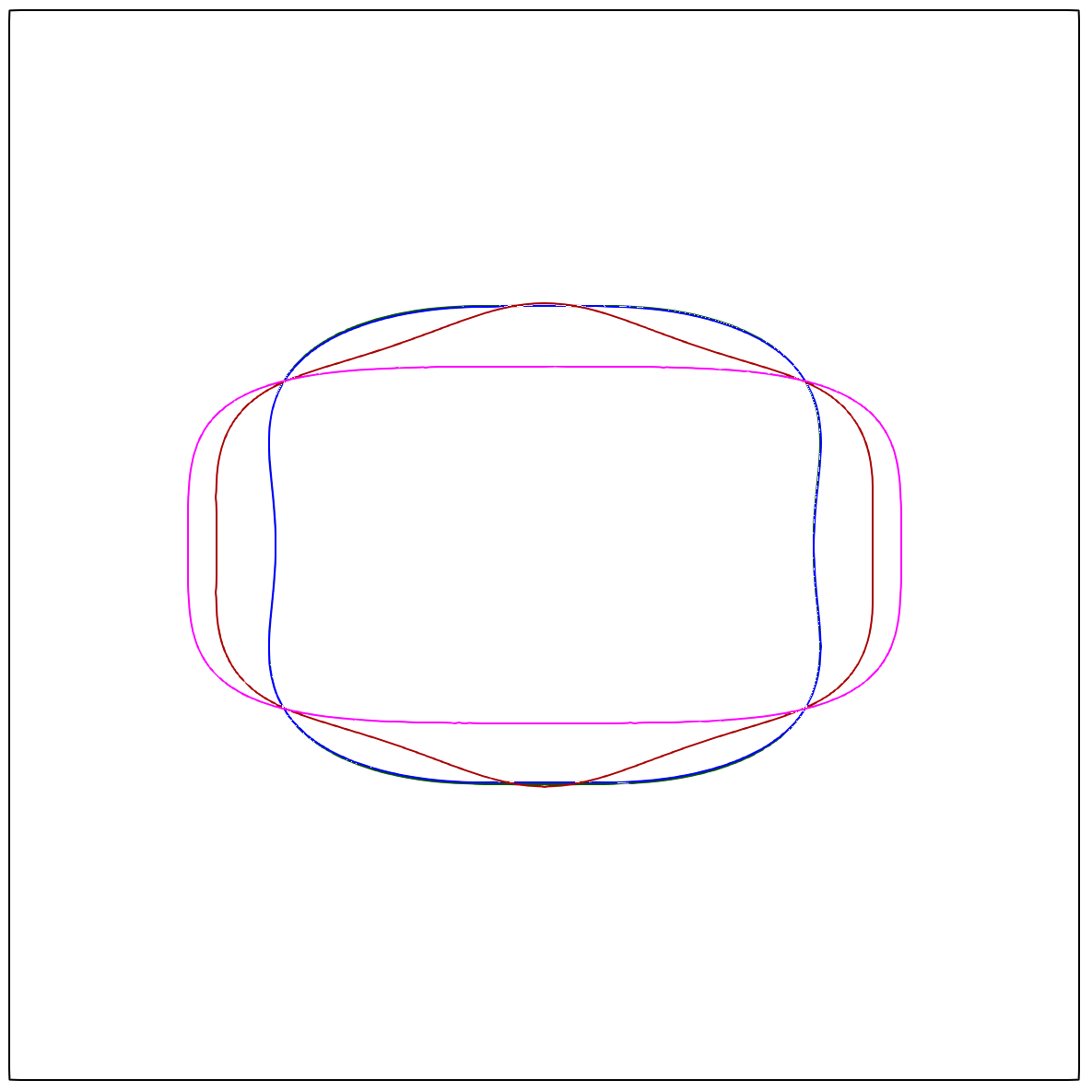} 
        \label{fig:ex3-a1}}
    \hfill
        \subfloat[]{\includegraphics[width=0.45\textwidth]{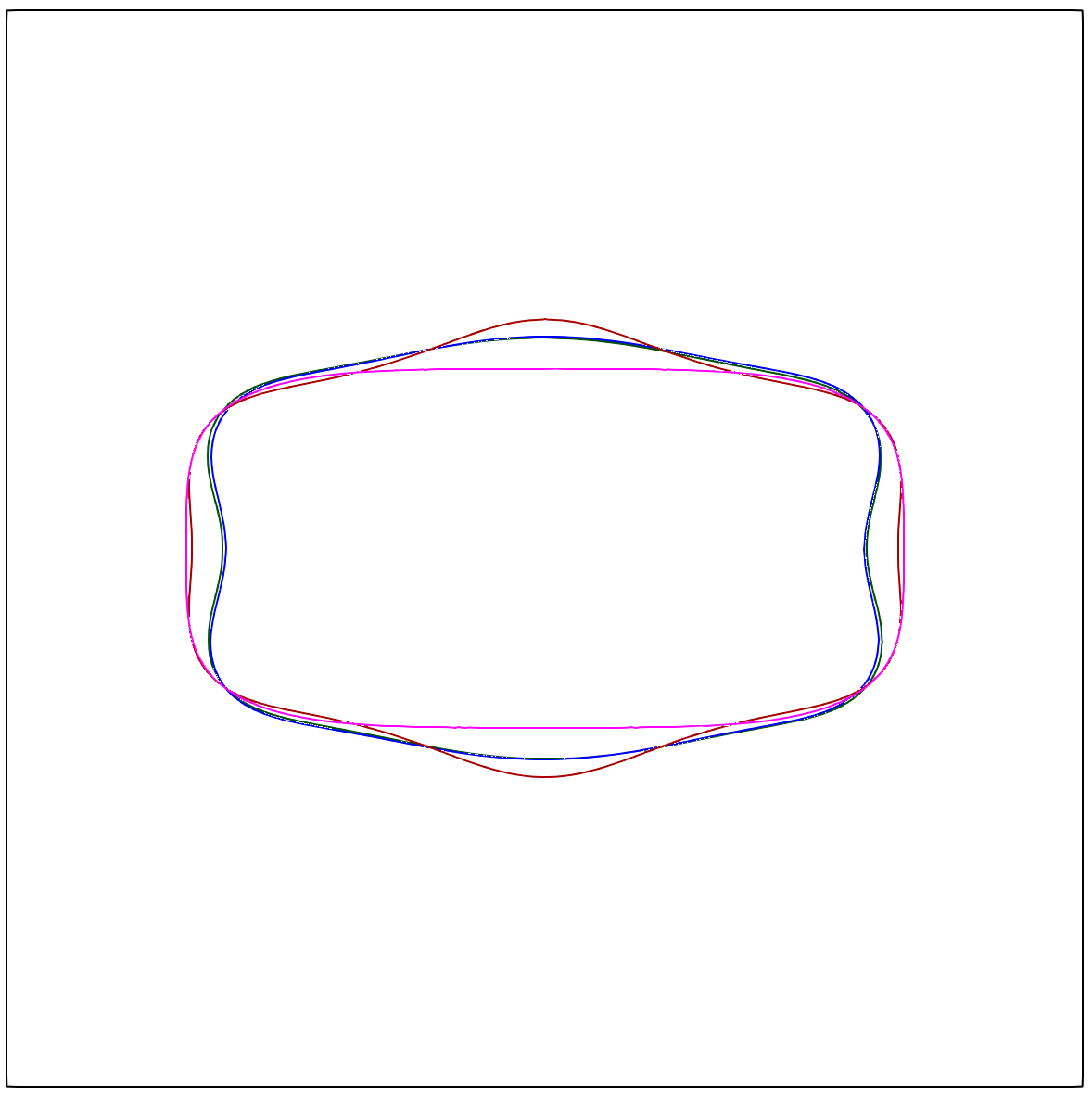}
       \label{fig:ex3-b}}
       \hfill
        \subfloat[]{\includegraphics[width=0.45\textwidth]
    {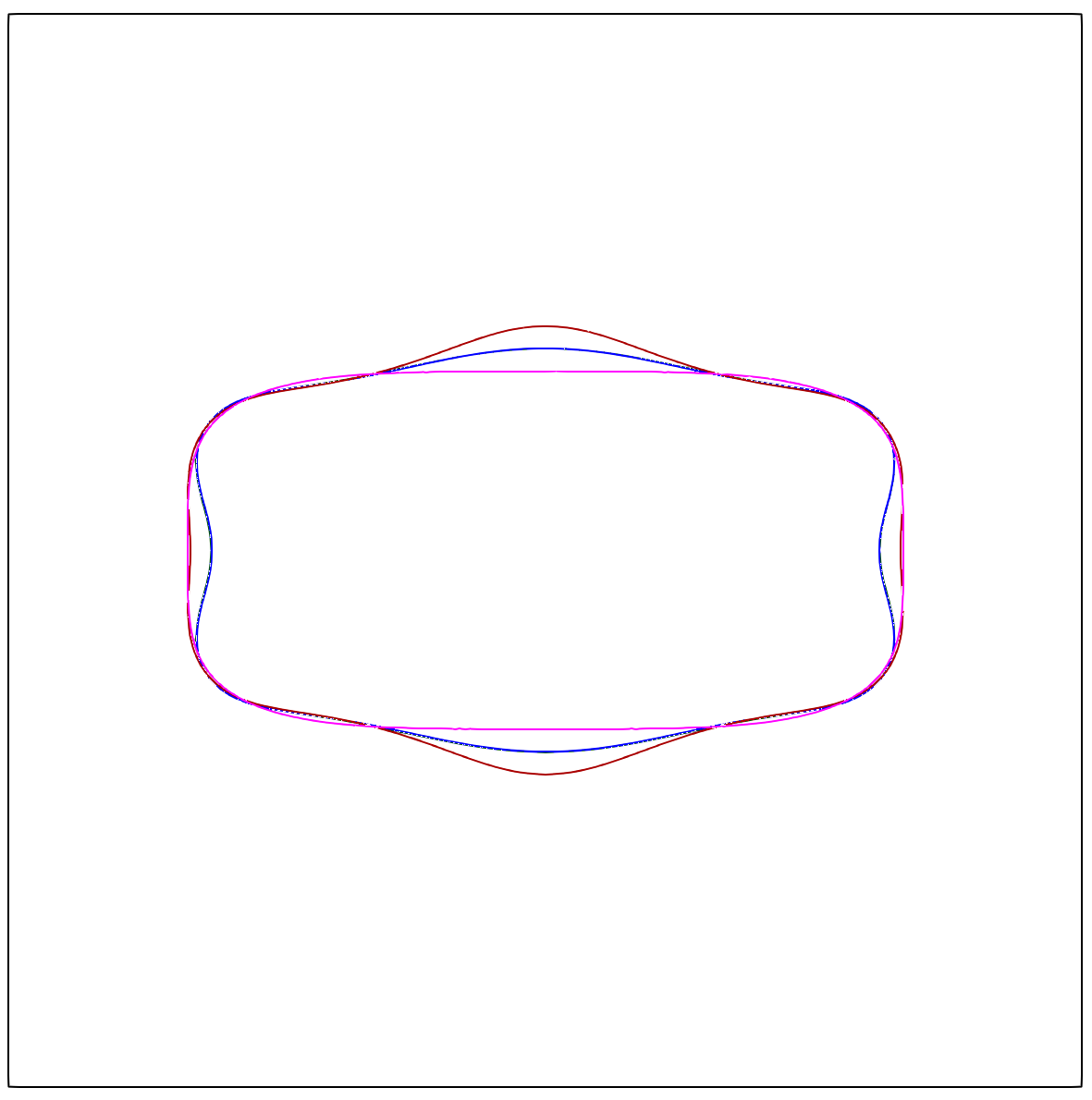} 
        \label{fig:ex3-c}}
              \hfill
        \subfloat[]{\includegraphics[width=0.45\textwidth]
    {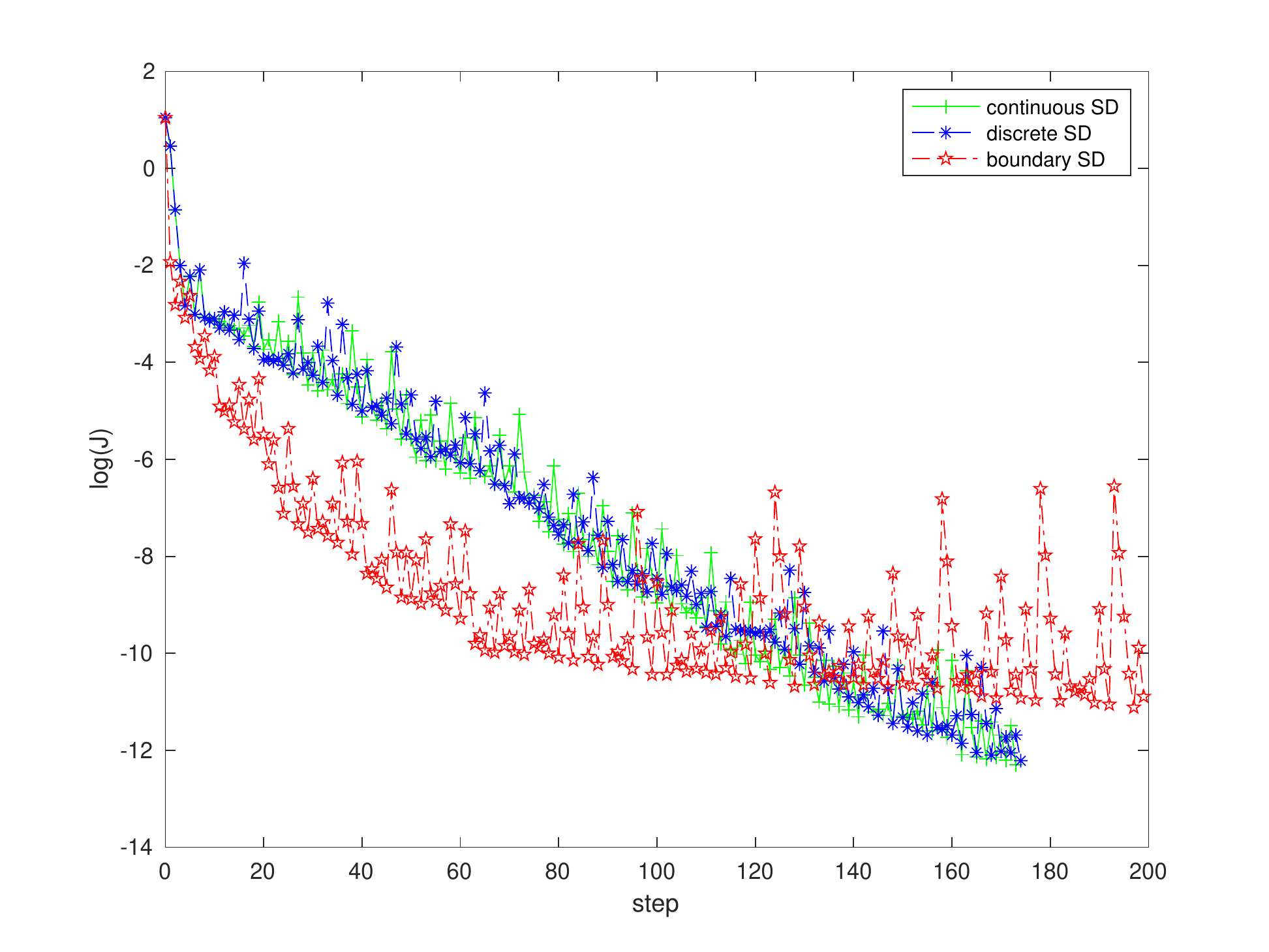} 
        \label{fig:ex3-d}}
\caption{\cref{ex:lame-square}: level sets at steps $0${\upshape{(a)}}, $5${\upshape{(b)}}, $10${\upshape{(c)}}, $50${\upshape{(d)}} and $150${\upshape{(e)}} and the comparison of residual evolution {\upshape{(f)}}}
\end{figure}

%\begin{rem}[The effect of mesh refinement]
%\end{rem}

\begin{example}[Topology change of merging]
In this test, we aim to validate the ability of topology change for our algorithm.
We start with an initial guess of two separate Lam\'e squares with the following initial level set (see the red curves in \cref{fig:ex3-aa}):
\[
	\phi(x,y) = \max\left( \phi_1(x,y), \phi_2(x,y) \right),
\]
where
$\phi_1(x,y) = 1-1296(x - 0.32)^4 - 1296(y - 0.5)^4$ and $\phi_2(x,y)= 1-1296(x - 0.68)^4 - 1296(y - 0.5)^4$.
The stopping criteria is set such that $J \le 5E-6$.
It takes $271$, $276$, and $129$ steps for the respective continuous SD, discrete SD and boundary SD to reach the stopping criteria.
\cref{fig:ex3-aa} -\cref{fig:ex3-ff} show the level sets at the respective steps $0$, $10$, $50$, $100$ and the last step of level sets obtained by the continuous SD (green), discrete SD (blue) and boundary SD (red). 
We observe the topology change of merging in this case. 
%The level sets are almost identical for each iteration for all three types SDs. 
%The residual also evolves similarly among all three SDs.

%and by using the boundary correction type SD it takes $30$ steps to reach the stopping criteria. \cref{fig:ex3-aa} -\cref{fig:ex3-cc} shows that the algorithm is capable to handle topology change.

\begin{figure}
    \centering
    \subfloat[]{\includegraphics[width=0.45\textwidth]{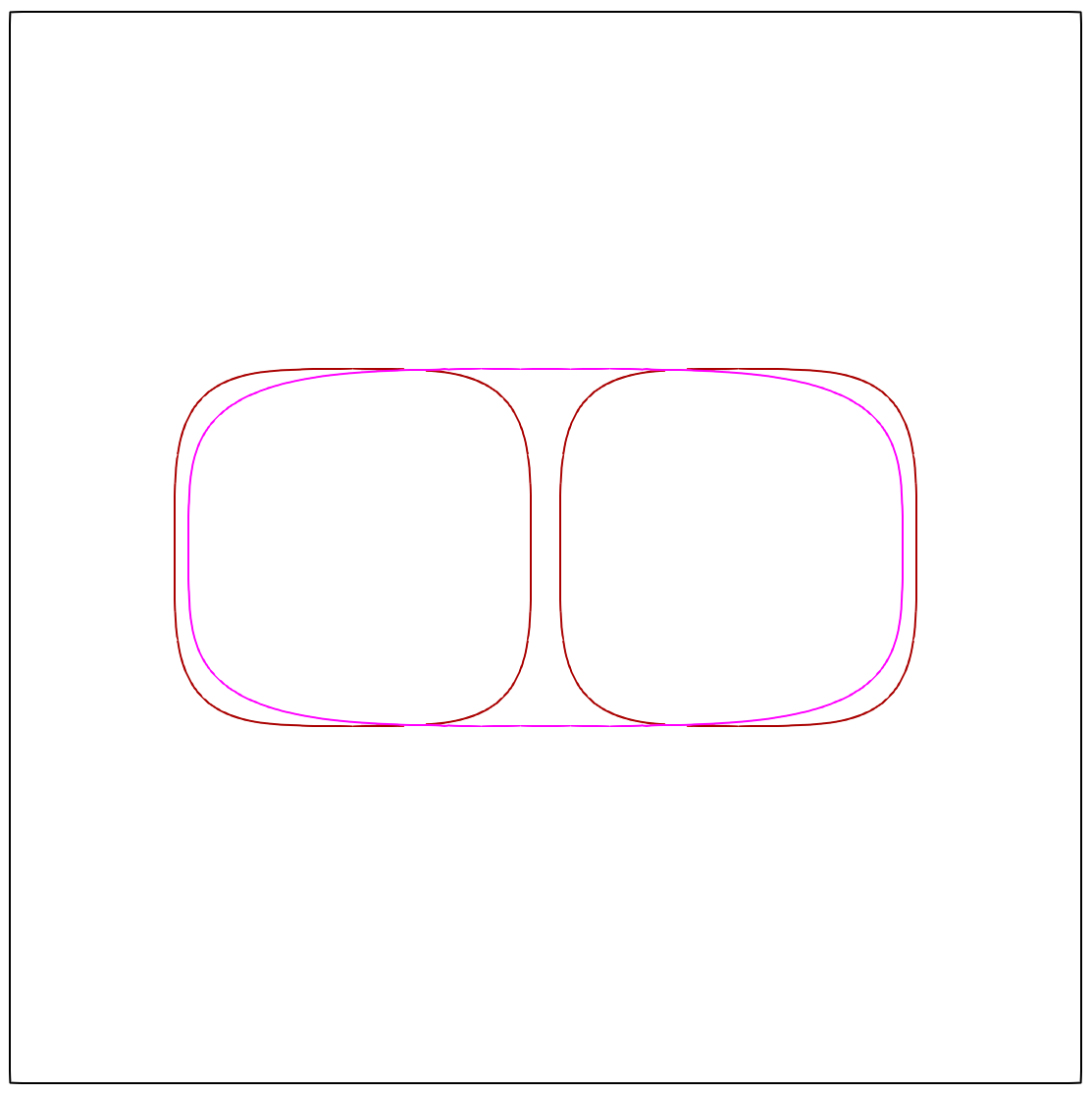} 
        \label{fig:ex3-aa}}
    \hfill
        \subfloat[]{\includegraphics[width=0.45\textwidth]{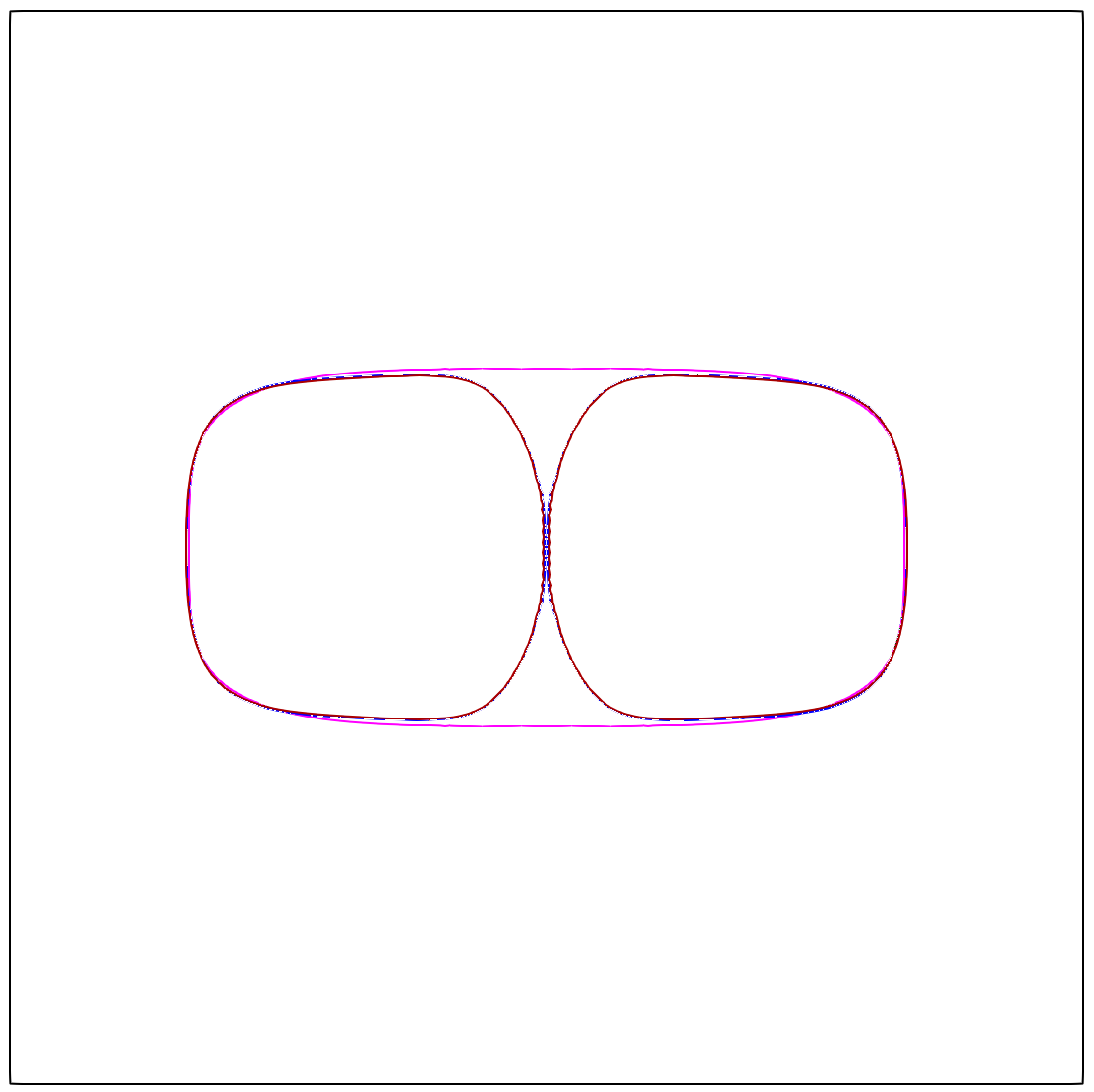}
       \label{fig:ex3-bb}}
       \hfill
        \subfloat[]{\includegraphics[width=0.45\textwidth]{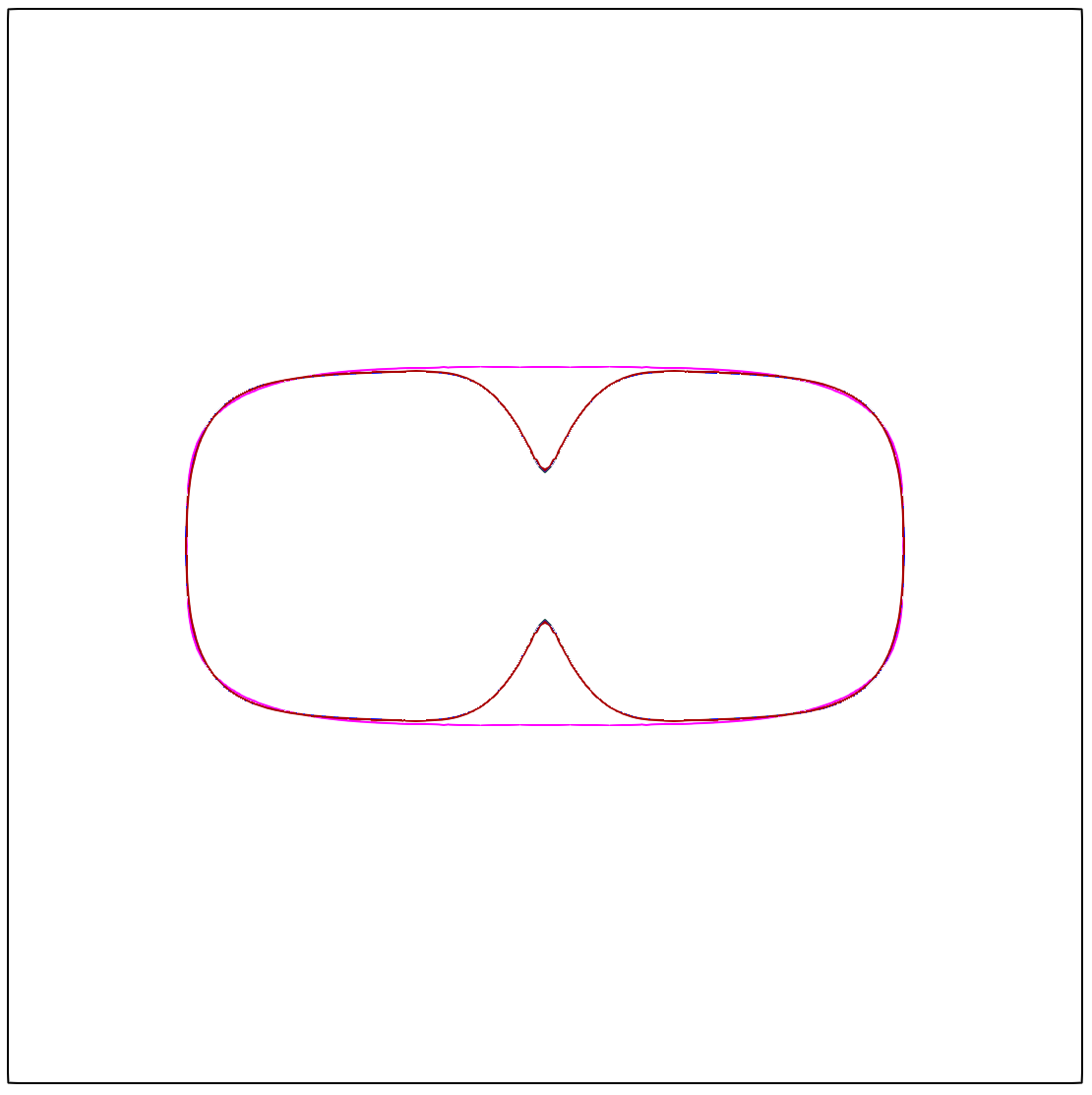} 
        \label{fig:ex3-cc}}
            \hfill
        \subfloat[]{\includegraphics[width=0.45\textwidth]{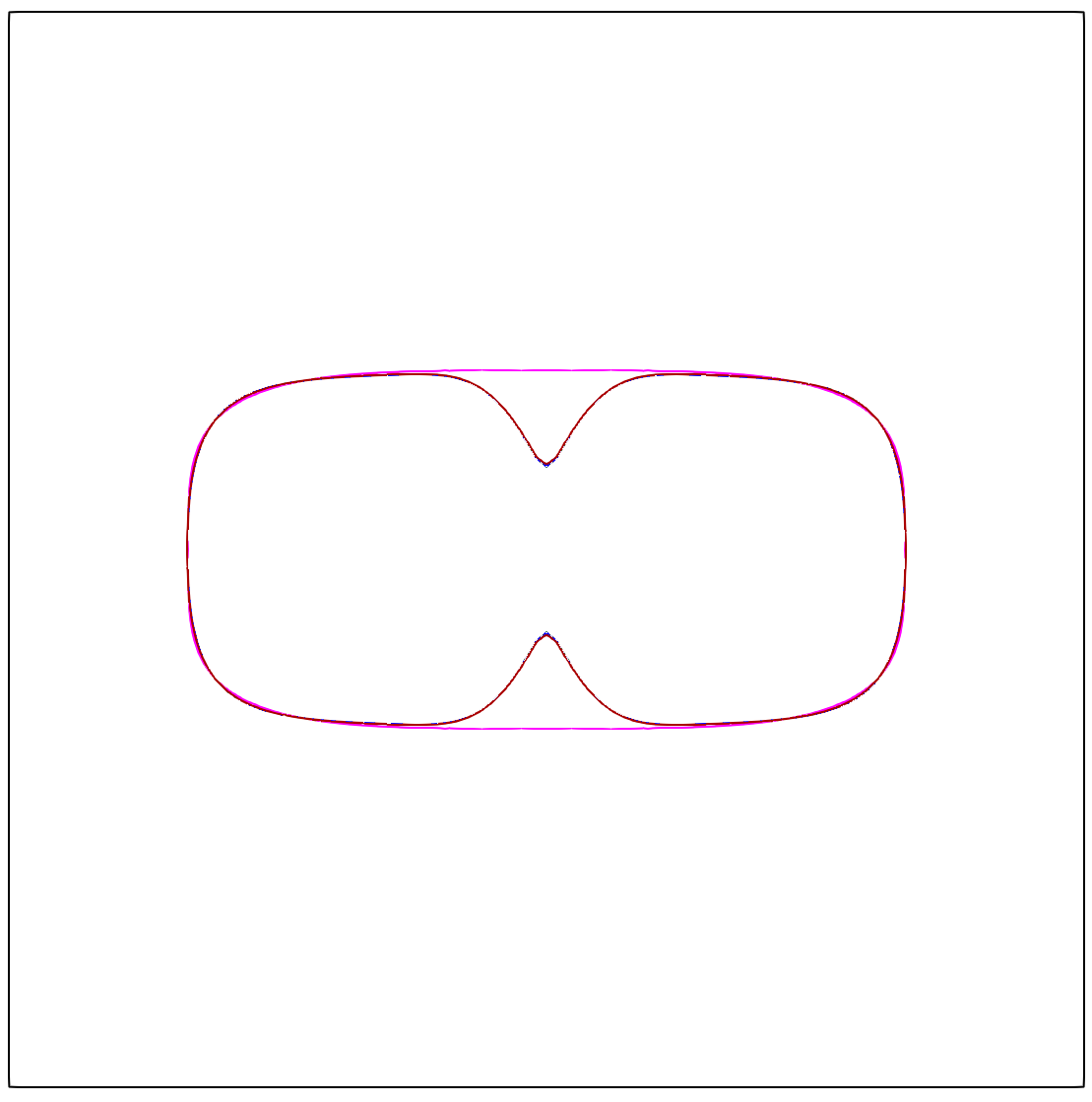}
       \label{fig:ex3-dd}}
                          \hfill
        \subfloat[]{\includegraphics[width=0.45\textwidth]{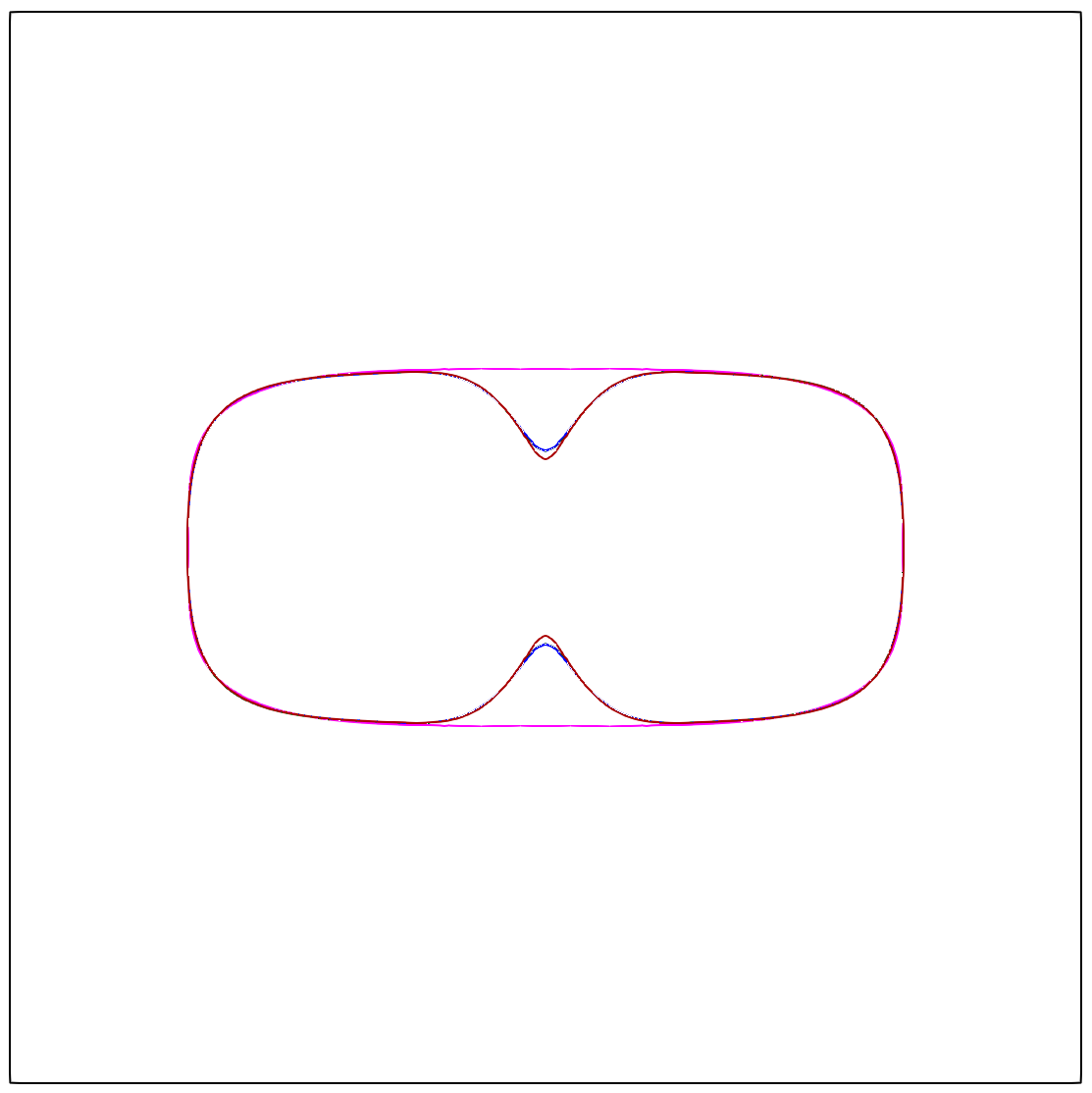}
       \label{fig:ex3-ff}}
                                \hfill
        \subfloat[]{\includegraphics[width=0.45\textwidth]{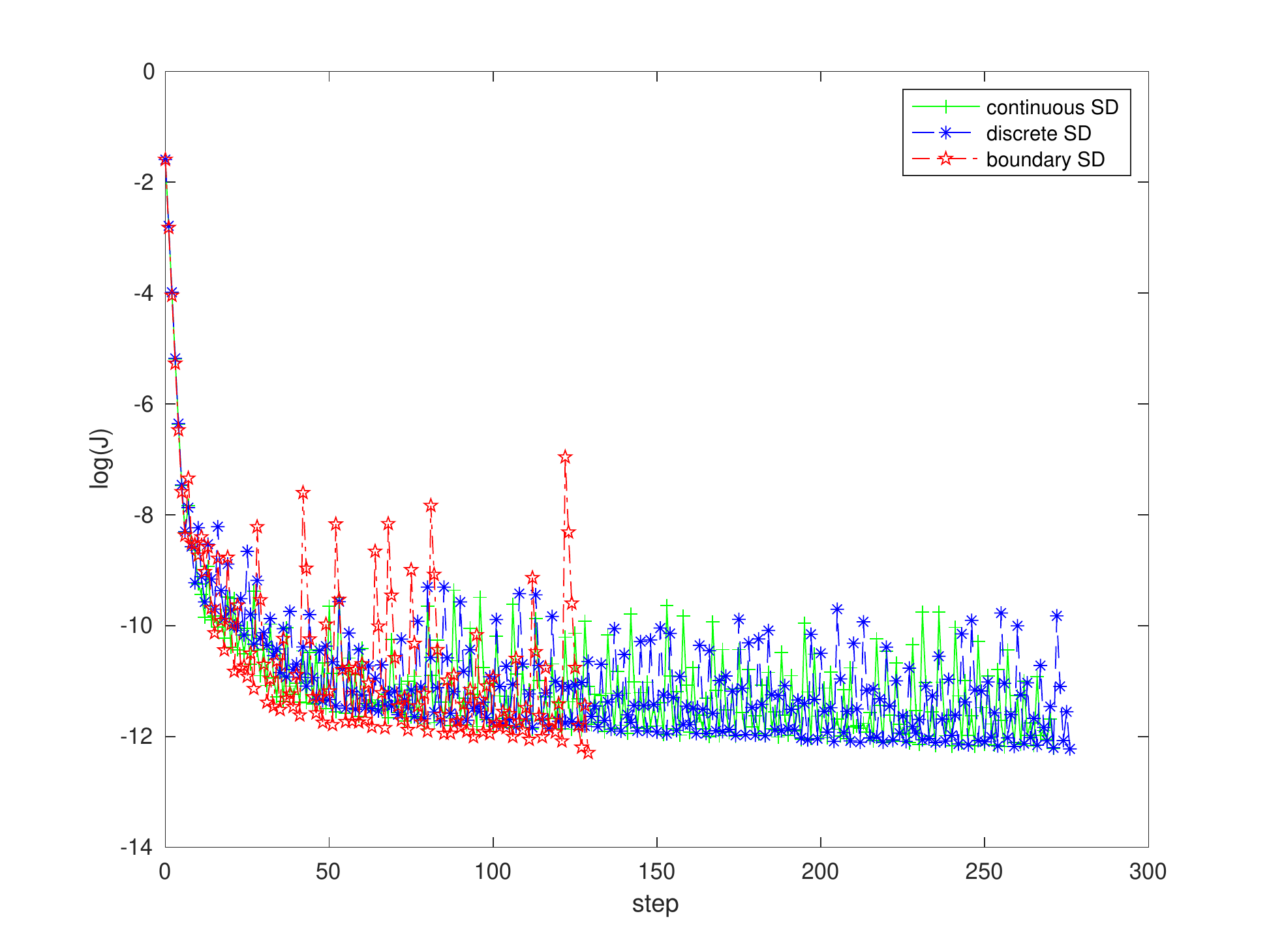}
       \label{fig:ex3-gg}}
\caption{\cref{ex:lame-square}: at steps $0${\upshape{(a)}}, $10${\upshape{(b)}}, $50${\upshape{(c)}}, 
$100${\upshape{(d)}}, the final {\upshape{(e)}}, and the comparison of residual evolution  {\upshape{(f)}}  .}
\end{figure}

\end{example}

\begin{example}[Doubly connected domain]\label{Ex4:double-domain}
In this example, we aim to identify the following level set with two isolated circles (see the magenta curve in  \cref{fig:ex5-a}):
\[
	\phi(x,y) = \max\left(0.15 - \sqrt{ (x - 0.2)^2 + (y -0.5 )^2}, 0.15 - \sqrt{ (x - 0.80)^2 + (y -0.5 )^2} \right).
\]
We firstly test with a connected Cassini oval (see the red curve in \cref{fig:ex5-a}):
%with  $b = 1.001$
\[
	\phi(x,y) = -(\hat x^2 + \hat y^2)^2 + 2(\hat x^2 - \hat y^2) - 1 + b^4, 
	\quad \hat x = 3x - 1.5, \quad  \hat y = 3y - 1.5,  \quad b = 1.001.
\]
The stopping criteria is set such that the maximal number of iteration not exceeds $300$. 
We chose  the data such that $f=0$, $g_N = (x - 0.5, y-0.5) \cdot \bn$  and $g_D$ is obtained by solving the forward problem on a $500 \times 500$ mesh.
Figures \cref{fig:ex5-b}--\ref{fig:ex5-d} show the level sets at the respective steps $50,100, 200$ and $300$. This example validates the capability of the algorithm in the topology change of splitting. During the process, the Cassini oval initially splits into two cone-like shapes and then each gradually evolve into a circle. The convergence is, however, quite slow and it is likely due to the sharp angles evolved after splitting. The results generated by the three SDs are again quite similar.

\begin{figure}
    \centering
    \subfloat[]{\includegraphics[width=0.45\textwidth]{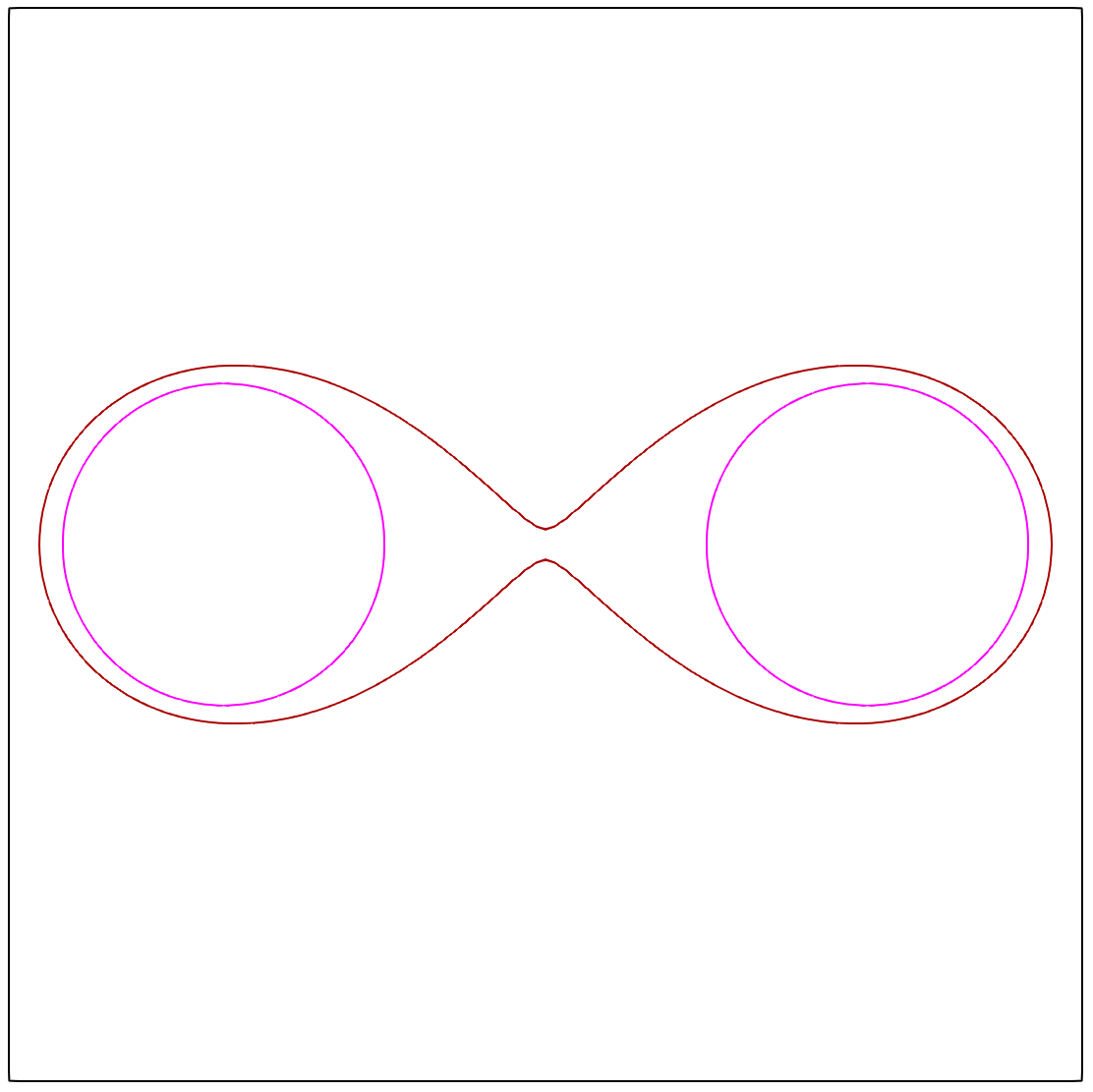} 
        \label{fig:ex5-a}}
    \hfill
        \subfloat[]{\includegraphics[width=0.45\textwidth]{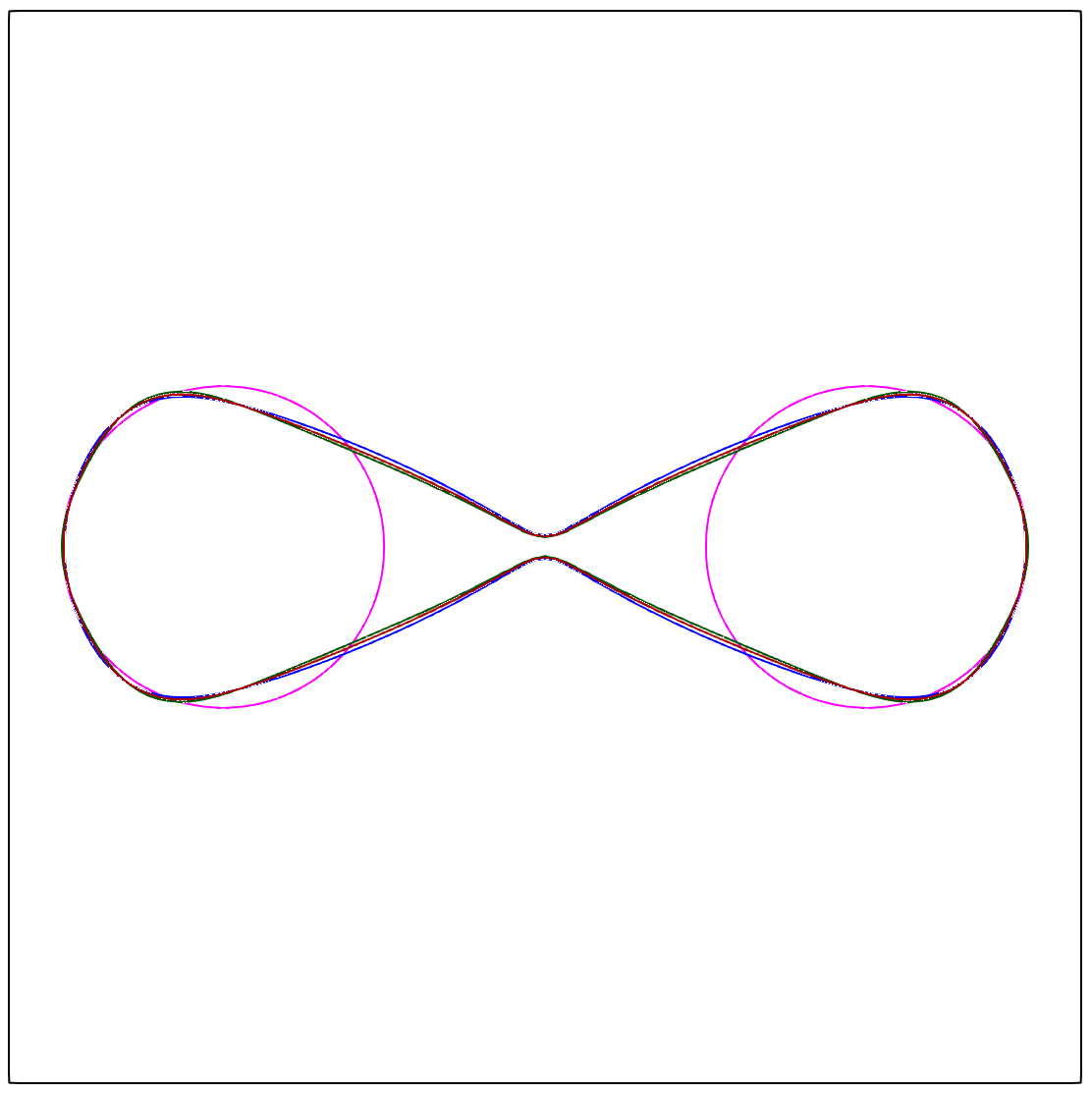}
       \label{fig:ex5-b}}
       \hfill
        \subfloat[]{\includegraphics[width=0.45\textwidth]{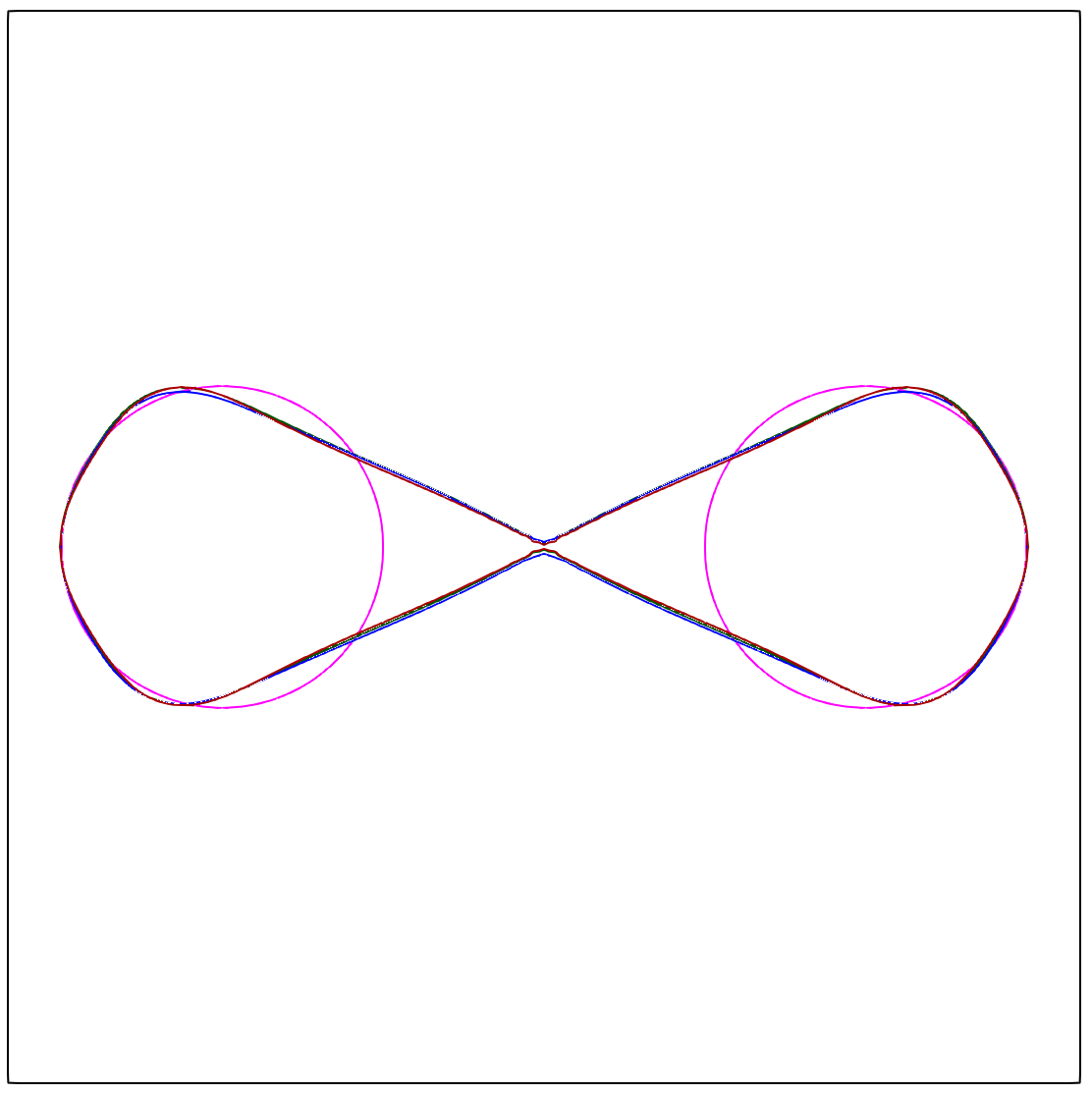} 
        \label{fig:ex5-c}}
        \hfill
        \subfloat[]{\includegraphics[width=0.45\textwidth]{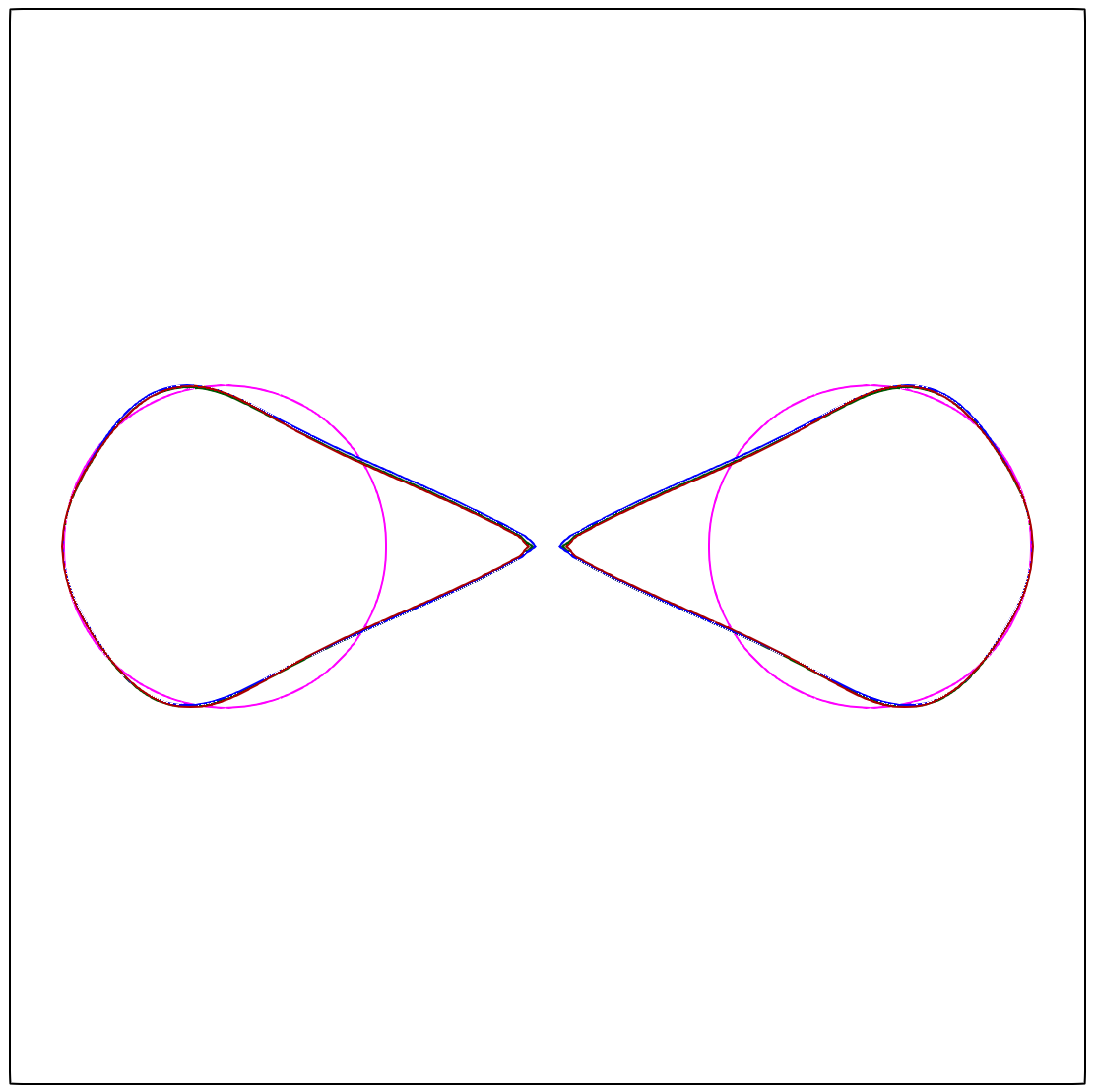} 
        \label{fig:ex5-d}}
                \hfill
        \subfloat[]{\includegraphics[width=0.45\textwidth]{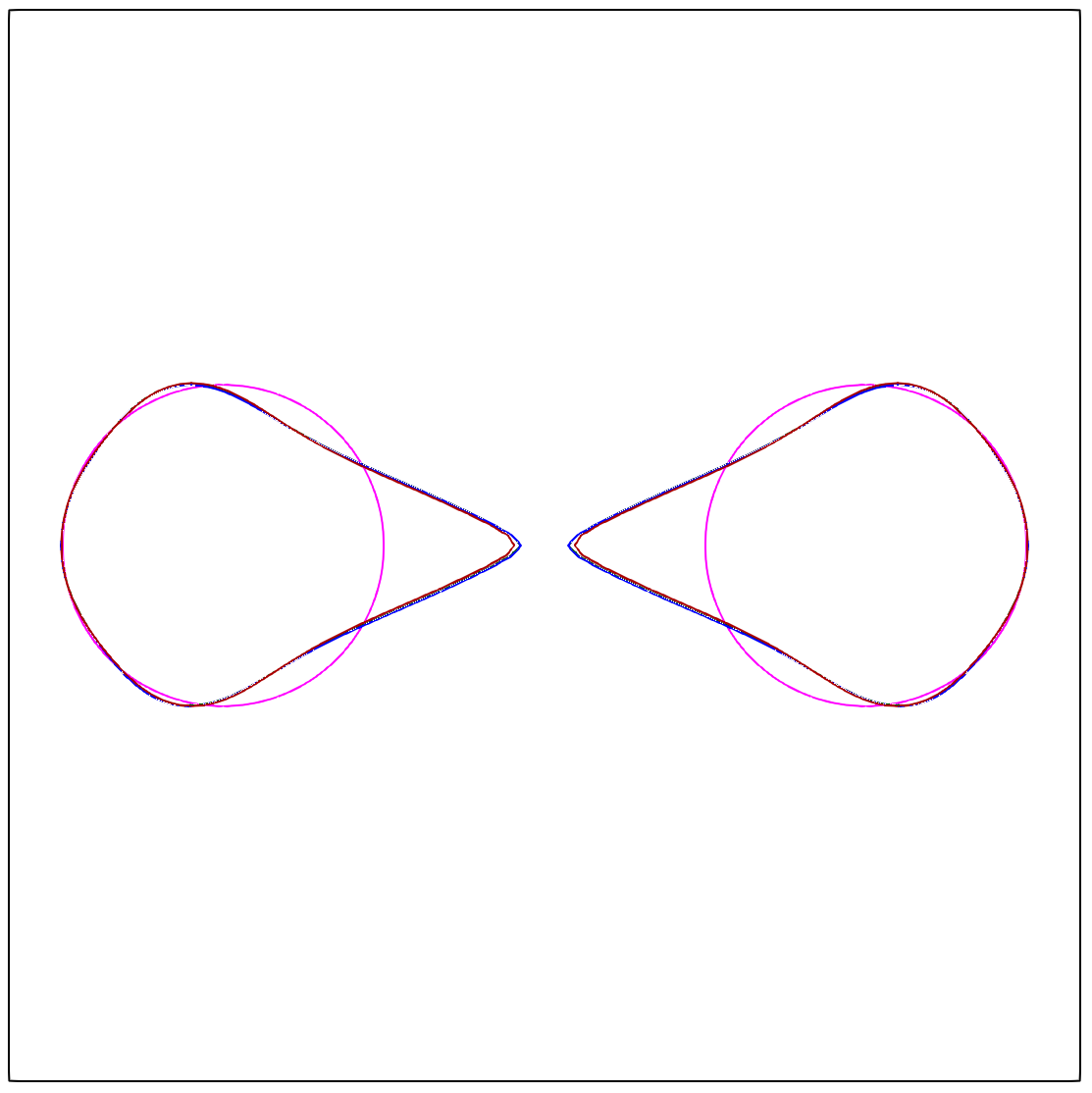} 
        \label{fig:ex5-e}}
                        \hfill
        \subfloat[]{\includegraphics[width=0.45\textwidth]{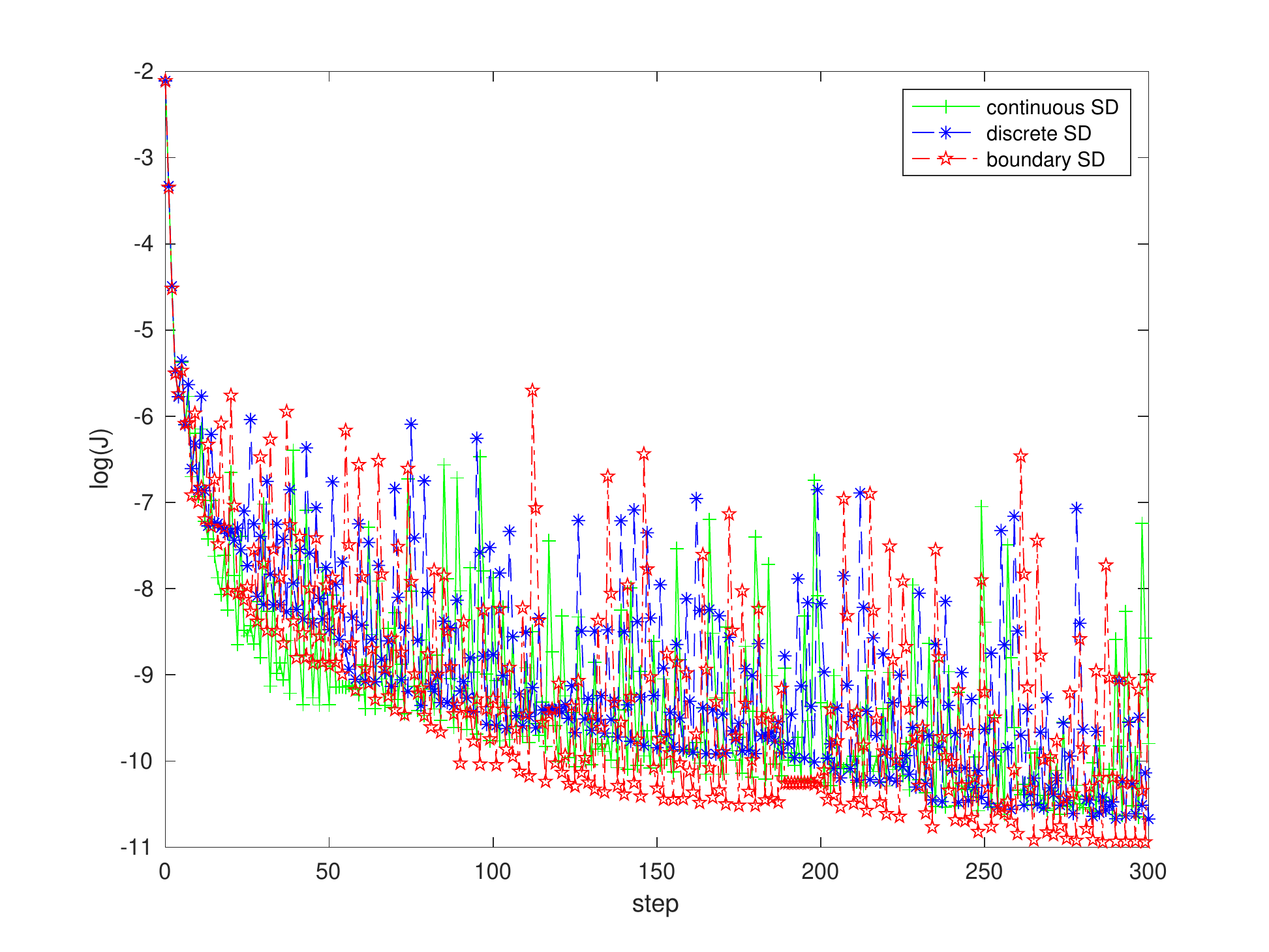} 
        \label{fig:ex5-f}}
\caption{\cref{Ex4:double-domain}: at steps $0${\upshape{(a)}}, $50${\upshape{(b)}}, $100${\upshape{(c)}}, $200${\upshape{(d)}} and $300${\upshape{(e)}} and the residual evolution}
\end{figure}

\end{example}
\section{Appendix}
Proof of \cref{lemma4.4}
\begin{proof}
By the assumption that $T_t$ is smooth, using similar arguments as in \cref{lem:shape-derivative} and \cref{lem:shape-derivative1} gives
\begin{equation}
\begin{split}
	&\int_{F^t} \jump{\nabla w \cdot \bn_t} \jump{\nabla  v\cdot \bn_t} \,ds\\
	&\quad = \int_{F} \jump{\nabla w \circ T^t \cdot (\bn_t \circ T_t)}\jump{\nabla v \circ T^t\cdot (\bn_t \circ T_t)}\omega(t)\,ds\\
	&\quad = \int_{F} \jump{A(t)\nabla (w \circ T^t) \cdot \bn}\jump{A(t)\nabla (v \circ T^t) \cdot \bn} \omega^{-1}(t)\,ds\\
\end{split}
\end{equation}
Applying product rule, we then have that
\begin{equation}
\begin{split}
	&D_{\O, \bftheta}\int_{F} \jump{\nabla w \cdot \bn} \jump{\nabla  v\cdot \bn} \,ds\\
	&=\int_F \jump{A'(0) \nabla w \cdot \bn} \jump{\nabla v \cdot \bn}
	+
	 \jump{A'(0) \nabla v \cdot \bn} \jump{\nabla w \cdot \bn}
	 \\
	&\quad + \int_{F} \jump{\nabla w \cdot \bn} \jump{\nabla  \dot v\cdot \bn}
	 + \jump{\nabla v \cdot \bn} \jump{\nabla  \dot w\cdot \bn} \,ds
	 \\
	 &\quad - \int_{F} \jump{\nabla w \cdot \bn} \jump{\nabla  v\cdot \bn} \, \omega'(0)ds 
	\\
	&= \int_{F} \jump{ (\nabla \cdot \bftheta) \nabla w \cdot \bn - S(\theta) \cdot \nabla w \cdot \bn} \jump{\nabla  v\cdot \bn} \,ds + \int_{F} \jump{\nabla \dot w \cdot \bn} \jump{\nabla  v\cdot \bn} \,ds
	\\
	&\quad + \int_{F} \jump{ (\nabla \cdot \bftheta) \nabla v \cdot \bn - S(\theta) \cdot \nabla v \cdot \bn} \jump{\nabla  w\cdot \bn} \,ds + \int_{F} \jump{\nabla w \cdot \bn} \jump{\nabla  \dot v\cdot \bn} \,ds
	\\
	&\quad  - \int_{F} \jump{\nabla w \cdot \bn} \jump{\nabla  v\cdot \bn} \, (\divvr \bftheta - (D\bftheta \cdot \bfn)  \cdot \bfn)ds.
\end{split}
\end{equation}
This completes the proof of  \cref{lem:shape-derivative4}.
\end{proof}

%\vspace{5cm}
%1. Adjoint regularization: adding $-S(z,z)$, not working!.
%2. Coarse scale initial iterations, in some cases work.
%3. Converged solution as many time as possible (overnight run, i.e., 1000 steps). 
\bibliographystyle{siamplain}
\bibliography{references}
\end{document}